\newcommand{\comment}[1]{}
\def\?{\ 
{\bf\color{red}???}\ 
\immediate\write16{}
\immediate\write16{Warning: There was still a question mark . . . }
\immediate\write16{}}
\long\def\forget#1{}
\newcommand{\PublOrArXiv}[2]{#2}
\theoremstyle{plain}
\newtheorem{Lemma}{Lemma}[section]
\newtheorem{Theorem}[Lemma]{Theorem}
\newtheorem{Proposition}[Lemma]{Proposition}
\newtheorem{Corollary}[Lemma]{Corollary}
\theoremstyle{definition}
\newtheorem{Definition}[Lemma]{Definition}
\newtheorem{Example}[Lemma]{Example}
\newtheorem{Remark}[Lemma]{Remark}
\newtheorem{Convention}[Lemma]{Convention}
\newtheorem{Point}[Lemma]{}
\newcommand{\DS}{\displaystyle}
\newcommand{\TS}{\textstyle}
\newcommand{\SC}{\scriptstyle}
\newcommand{\SSC}{\scriptscriptstyle}
\newcounter{zahl}
\def\theenumi{(\alph{enumi})}
\def\p@enumii{\theenumi}
\newcommand{\Art}{{\rm Art}}
\DeclareMathOperator{\Aut}{Aut}
\newcommand{\Betti}{{\rm Betti}}
\DeclareMathOperator{\End}{End}
\DeclareMathOperator{\Frob}{Frob}
\DeclareMathOperator{\Gal}{Gal}
\DeclareMathOperator{\GL}{GL}
\DeclareMathOperator{\Koh}{H}
\DeclareMathOperator{\Hom}{Hom}
\DeclareMathOperator{\QEnd}{QEnd}
\DeclareMathOperator{\QHom}{QHom}
\DeclareMathOperator{\Spec}{Spec}
\DeclareMathOperator{\Spf}{Spf}
\DeclareMathOperator{\Tr}{Tr}
\DeclareMathOperator{\Var}{V}
\newcommand{\alg}{{\rm alg}}
\newcommand{\dR}{{\rm dR}}
\newcommand{\id}{{\rm id}}
\newcommand{\insep}{{\rm insep}}
\renewcommand{\mod}{\;{\rm mod}\;}
\newcommand{\nor}{{\rm nor}}
\DeclareMathOperator{\ord}{ord}
\DeclareMathOperator{\rk}{rk}
\newcommand{\sep}{{\rm sep}}
\renewcommand{\phi}{\varphi}
\renewcommand{\theta}{\vartheta}
\renewcommand{\epsilon}{\varepsilon}
\let\setminus\smallsetminus
\newcommand{\es}{\enspace}
\newcommand{\open}{^\circ}
\newcommand{\dual}{^{^\vee}}
\newcommand{\mal}{^{\SSC\times}}
\newcommand{\dbl}{{\mathchoice{\mbox{\rm [\hspace{-0.15em}[}}
                              {\mbox{\rm [\hspace{-0.15em}[}}
                              {\mbox{\scriptsize\rm [\hspace{-0.15em}[}}
                              {\mbox{\tiny\rm [\hspace{-0.15em}[}}}}
\newcommand{\dbr}{{\mathchoice{\mbox{\rm ]\hspace{-0.15em}]}}
                              {\mbox{\rm ]\hspace{-0.15em}]}}
                              {\mbox{\scriptsize\rm ]\hspace{-0.15em}]}}
                              {\mbox{\tiny\rm ]\hspace{-0.15em}]}}}}
\newcommand{\dpl}{{\mathchoice{\mbox{\rm (\hspace{-0.15em}(}}
                              {\mbox{\rm (\hspace{-0.15em}(}}
                              {\mbox{\scriptsize\rm (\hspace{-0.15em}(}}
                              {\mbox{\tiny\rm (\hspace{-0.15em}(}}}}
\newcommand{\dpr}{{\mathchoice{\mbox{\rm )\hspace{-0.15em})}}
                              {\mbox{\rm )\hspace{-0.15em})}}
                              {\mbox{\scriptsize\rm )\hspace{-0.15em})}}
                              {\mbox{\tiny\rm )\hspace{-0.15em})}}}}
\newcommand{\invlim}[1][]{\ifthenelse{\equal{#1}{}}% falls Argument leer
{\DS \lim_{\longleftarrow}}%                         verwende niedrige Version
{\DS \lim_{\underset{#1}{\longleftarrow}}}%  sonst:  verwende Argument
}
\newcommand{\dirlim}[1][]{\ifthenelse{\equal{#1}{}}% falls Argument leer
{\DS \lim_{\longrightarrow}}%                        verwende niedrige Version
{\DS \lim_{\underset{#1}{\longrightarrow}}}% sonst:  verwende Argument
}
\newcommand{\ul}[1]{{\underline{#1}}}
\newcommand{\wt}[1]{{\widetilde{#1}}}
\newcommand{\wh}[1]{{\widehat{#1}}}
\newcommand{\BOne} {{\mathchoice{\hbox{\rm1\kern-2.7pt l\kern.9pt}}
                              {\hbox{\rm1\kern-2.7pt l\kern.9pt}}
                              {\hbox{\scriptsize\rm1\kern-2.3pt l\kern.4pt}}
                              {\hbox{\scriptsize\rm1\kern-2.4pt l\kern.5pt}}}}
\newcommand{\BC}{{\mathbb{C}}}
\newcommand{\BF}{{\mathbb{F}}}
\newcommand{\BG}{{\mathbb{G}}}
\newcommand{\BN}{{\mathbb{N}}}
\newcommand{\BP}{{\mathbb{P}}}
\newcommand{\BQ}{{\mathbb{Q}}}
\newcommand{\BR}{{\mathbb{R}}}
\newcommand{\BZ}{{\mathbb{Z}}}
\newcommand{\bB}{{\mathbf{B}}}
\newcommand{\sG}{{\mathscr{G}}}
\newcommand{\CC}{{\cal{C}}}
\newcommand{\CI}{{\cal{I}}}
\newcommand{\CJ}{{\cal{J}}}
\newcommand{\CM}{{\cal{M}}}
\newcommand{\CO}{{\cal{O}}}
\newcommand{\CR}{{\cal{R}}}
\newcommand{\FD}{{\mathfrak{D}}}
\newcommand{\Fa}{{\mathfrak{a}}}
\newcommand{\Fb}{{\mathfrak{b}}}
\newcommand{\Fd}{{\mathfrak{d}}}
\newcommand{\Ff}{{\mathfrak{f}}}
\newcommand{\Fq}{{\mathfrak{q}}}
\newcommand{\Fm}{{\mathfrak{m}}}
\def\longto{\longrightarrow}
\def\into{\hookrightarrow}
\let\onto\twoheadrightarrow
\def\longonto{\mbox{$\kern2pt\longto\kern-8pt\to\kern2pt$}}
\def\isoto{\stackrel{}{\mbox{\hspace{1mm}\raisebox{+1.4mm}{$\SC\sim$}\hspace{-3.5mm}$\longrightarrow$}}}
\def\longinto{\lhook\joinrel\longrightarrow}
\newbox\mybox
\def\arrover#1{\mathrel{
       \setbox\mybox=\hbox spread 1.4em{\hfil$\scriptstyle#1$\hfil}
       \vbox{\offinterlineskip\copy\mybox
             \hbox to\wd\mybox{\rightarrowfill}}}}
\def\ulCC{{\underline{\CC\!}\,}}
\def\ulM{{\underline{M\!}\,}}
\def\ulHM{{\underline{\hat M\!}\,}}
\def\ulHN{{\underline{\hat N\!}\,}}
\newcommand{\tminus}[1]{\ell^{\SSC -}_{#1}}
\newcommand{\tplus}[1]{\ell^{\SSC +}_{#1}}
\newcommand{\ttplus}[1]{\tilde\ell^{\SSC +}_{#1}}
\newcommand{\tplusminus}{\ell}
\begin{document}
\author{Urs Hartl and Rajneesh Kumar Singh\footnote{Both authors acknowledge support by the Deutsche Forschungsgemeinschaft (DFG) in form of SFB 878 and Germany's Excellence Strategy EXC 2044--390685587 ``Mathematics M\"unster: Dynamics--Geometry--Structure''. The first author was also supported  the DFG in form of Project-ID 427320536 -- SFB 1442.}}
\PublOrArXiv{
\title{Periods of Drinfeld modules and local shtukas with complex multiplication --- Erratum}
}
{
\title{Periods of Drinfeld modules and local shtukas with complex multiplication}
}

\maketitle

\PublOrArXiv{\begin{appendix}
\stepcounter{section}
% For the version submitted as erratum to J. Math. Inst. Jussieu
}
{

\begin{abstract}
Colmez~\cite{Colmez93} conjectured a product formula for periods of abelian varieties over number fields with complex multiplication and proved it in some cases. His conjecture is equivalent to a formula for the Faltings height of CM abelian varieties in terms of the logarithmic derivatives at $s=0$ of certain Artin $L$-functions.

In a series of articles we investigate the analog of Colmez's theory in the arithmetic of function fields. There abelian varieties are replaced by Drinfeld modules and their higher dimensional generalizations, so-called $A$-motives. In the present article we prove the product formula for the Carlitz module and we compute the valuations of the periods of a CM $A$-motive at all finite places in terms of Artin $L$-series. The latter is achieved by investigating the local shtukas associated with the $A$-motive.\\
\noindent
{\it Mathematics Subject Classification (2000)\/}: 
11G09,  % Drinfeld Modules, higher dimensional motives
%11F80,  % Galois representations (Discontinuous groups and automorphic forms)
(11R42,  % Zeta functions and L-functions of number fields
11R58,  % Arithmetic theory of algebraic function fields
%11S20,  % Galois theory of local and $p$-adic fields
%11S25,  % Galois cohomology of local and $p$-adic fields
%13A35,  % Characteristic $p$ methods (Frobenius endomorphism) ...
%14C30,  % Transcendental methods, Hodge theory, Hodge conjecture 
%14F30,  % $p$-adic cohomology, crystalline cohomology
%14F40,  % de Rham cohomology
%14G20,  % Local ground fields
%14G22,  % Rigid analytic geometry
%14G35,  % Modular and Shimura varieties
14L05)  % Formal groups, $p$-divisible groups
%14M15,  % Grassmannians, Schubert varieties, flag manifolds
%20G25   % Linear algebraic groups over local fields and their integers
\end{abstract}

%\tableofcontents

%%%%%%%%%%%%%%%%%%%%%%%%%%%%%%%%%%%%%%%%%%%%%%%%%%%%%%%%%%%%%%%%%%%%%%
%
%    Introduction
%
%%%%%%%%%%%%%%%%%%%%%%%%%%%%%%%%%%%%%%%%%%%%%%%%%%%%%%%%%%%%%%%%%%%%%%

\section{Introduction}
\setcounter{equation}{0}

In \cite{Colmez93} P.~Colmez considers product formulas for periods of abelian varieties. Let $X$ be an abelian variety defined over a number field $K$ with complex multiplication by the ring of integers in a CM-field $E$ and of CM-type $\Phi$. Let $\BQ^\alg$ be the algebraic closure of $\BQ$ in $\BC$, let $H_E:=\Hom_\BQ(E,\BQ^\alg)$ be the set of all ring homomorphisms $E\into\BQ^\alg$ and assume that $K$ contains $\psi(E)$ for every $\psi\in H_E$. For a $\psi\in H_E$ let $\omega_\psi\in\Koh^1_{\dR}(X,K)$ be a non-zero cohomology class such that $a^*\omega_\psi=\psi(a)\cdot\omega_\psi$ for all $a\in E$. For every embedding $\eta\colon K\into\BQ^\alg$, let $X^\eta$ and $\omega_\psi^\eta$ be deduced from $X$ and $\omega_\psi$ by base extension. Let $(u_\eta)_\eta\in\prod_{\eta\in H_K}\Koh_1(X^\eta(\BC),\BZ)$ be a family of cycles compatible with complex conjugation. Let $v$ be a place of $\BQ$. If $v=\infty$ the de Rham isomorphism between Betti and de Rham cohomology yields a complex number $\int_{u_\eta}\omega_\psi^\eta$ and its absolute value $\bigl|\int_{u_\eta}\omega_\psi^\eta\bigr|_\infty\in\BR$. If $v$ corresponds to a prime number $p\in\BZ$, we fix an inclusion $\BQ^\alg\into\BQ_p^\alg$. With this data Colmez~\cite{Colmez93} associates a period $\int_{u_\eta}\omega_\psi^\eta$ in Fontaine's $p$-adic period field $\bB_{\dR}$ and an absolute value $\bigl|\int_{u_\eta}\omega_\psi^\eta\bigr|_v\in\BR$. He considers the product $\prod_v\prod_{\eta\in H_K}\bigl|\int_{u_\eta}\omega_\psi^\eta\bigr|_v$ and (after some modifications) conjectures that this product evaluates to~$1$; see \cite[Conjecture~0.1]{Colmez93} for the precise formulation. This conjecture is equivalent to a conjectural formula for the Faltings height of a CM abelian variety in terms of the logarithmic derivatives at $s=0$ of certain Artin $L$-functions. Colmez proves the conjectures when $E$ is an abelian extension of $\BQ$. On the way, he computes $\prod_{\eta\in H_K}\bigl|\int_{u_\eta}\omega_\psi^\eta\bigr|_v$ at a finite place $v$ in terms of the local factor at $v$ of the Artin $L$-series associated with an Artin character $a^0_{E,\psi,\Phi}\colon\Gal(\BQ^\alg/\BQ)\to\BC$ that only depends on $E$, $\psi$ and $\Phi$ but not on $X$ and $v$; see \cite[Th\'eor\`eme~I.3.15]{Colmez93}. There has been further progress on Colmez's conjecture by Obus~\cite{Obus13}, Yang~\cite{Yang13}, Andreatta, Goren, Howard, Madapusi Pera \cite{AGHM}, Yuan, Zhang~\cite{YuanZhang15}, Barquero-Sanchez, Masri \cite{BSM} and others.

Our goal in this article is to develop the analog of Colmez's theory in the ``Arithmetic of function fields''. Here abelian varieties are replaced by Drinfeld modules \cite{Drinfeld,Goss} and their higher dimensional generalizations, so-called $A$-motives, which also generalize Anderson's \emph{$t$-motives} \cite{Anderson86}. To define them let $\BF_q$ be a finite field with $q$ elements, let $C$ be a smooth projective, geometrically irreducible curve over $\BF_q$, let $\infty\in C$ be a fixed closed point and let $A:=\Gamma(C\setminus\{\infty\},\CO_C)$ be the ring of regular functions on $C$ outside $\infty$. Let $Q$ be the fraction field of $A$ and let $K$ be a finite field extension of $Q$ contained in a fixed algebraic closure $Q^\alg$ of $Q$. We write $A_K:=A\otimes_{\BF_q}K$ and consider the endomorphism $\sigma:=\id_A\otimes\Frob_{q,K}$ of $A_K$, where $\Frob_{q,K}(b)=b^q$ for $b\in K$. For an $A_K$-module $M$ we set $\sigma^*M:=M\otimes_{A_K,\sigma}A_K$ and for a homomorphism $f\colon M\to N$ of $A_K$-modules we set $\sigma^*f:=f\otimes\id_{A_K}\colon\sigma^*M\to\sigma^*N$. Let $\gamma\colon A\to K$ be the inclusion $A\subset Q\subset K$, and set $\CJ:=(a\otimes1-1\otimes\gamma(a)\colon a\in A)\subset A_K$. Then $\gamma$ can be recovered as the homomorphism $A\to A_K/\CJ=K$. 

\begin{Definition}\label{DefAMotive}
An \emph{$A$-motive of rank $r$ over $K$} is a pair $\ulM=(M,\tau_M)$ consisting of a locally free $A_K$-module $M$ of rank $r$ and an isomorphism $\tau_M\colon\sigma^*M|_{\Spec A_K\setminus\Var(\CJ)}\isoto M|_{\Spec A_K\setminus\Var(\CJ)}$ of the associated sheaves outside $\Var(\CJ)\subset\Spec A_K$. We write $\rk\ulM:=r$. A \emph{morphism} between $A$-motives $f\colon(M,\tau_M)\to(N,\tau_N)$ is an $A_K$-homomorphism $f\colon M\to N$ with $f\circ\tau_M=\tau_N\circ\sigma^*f$.
\end{Definition}

Let us first give a rough sketch of the function field analog of Colmez's conjecture, before we explain more details and our main results later in this introduction. An $A$-motive has various (co-)homology realizations, for example a \emph{de Rham realization} $\Koh^1_\dR(\ulM,K)$, and if it is uniformizable also a \emph{Betti realization} $\Koh_{1,\Betti}(\ulM,A)$. For every \emph{place $v$ of $Q$}, that is a closed point $v\in C$ there is a comparison isomorphism between the Betti and de Rham cohomology of $\ulM$, which for $\omega\in\Koh^1_\dR(\ulM,K)$ and $u\in\Koh_{1,\Betti}(\ulM,A)$ is given by a pairing $\langle\omega,u\rangle_v$ and allows to define the absolute value $\bigl|\int_u\omega\bigr|_v:=\bigl|\langle\omega,u\rangle_v\bigr|_v\in\BR$. Now we say that $\ulM$ has \emph{complex multiplication} if $\QEnd_K(\ulM):=\End_K(\ulM)\otimes_AQ$ contains a commutative, semi-simple $Q$-algebra $E$ of dimension $\dim_QE=\rk\ulM$. Here semi-simple means that $E$ is a product of fields and we do not assume that $E$ is itself a field. Let $\ulM$ be a uniformizable $A$-motive over a finite Galois extension $K\subset Q^\alg$ of $Q$, which has complex multiplication by a \emph{separable} $Q$-algebra $E$. Let $0\ne\omega_\psi\in\Koh^1_\dR(\ulM,K)$ satisfy $a^*\omega_\psi=\psi(a)\cdot\omega_\psi$ for all $a\in E$, where $\psi:E\to K$ is a $Q$-homomorphism. Then in Theorem~\ref{ThmDMPeriod} and Corollary~\ref{CorDMPeriod} we will for all finite places $v$ compute $\bigl|\int_u\omega_\psi\bigr|_v$ and its average over all $Q$-automorphisms of $K$ in terms of the local factor at $v$ of an Artin $L$-series. The question analogous to \cite{Colmez93} is then, whether one can make sense of the product $\prod_{v}\bigl|\int_u\omega_\psi\bigr|_v$ over all places $v$ including $\infty$, and whether this product evaluates to $1$.

\medskip

After this vague sketch let us give more details and precise definitions in order to formulate our main results. We start by introducing the cohomology realizations of an $A$-motive $\ulM$ over $K$. First of all, there is the \emph{de Rham realization} $\Koh^1_\dR(\ulM,K):=\sigma^*M/\CJ\cdot\sigma^*M$ and for each maximal ideal $v\subset A$ a \emph{$v$-adic \'etale realization} $\Koh^1_v(\ulM,A_v)$ where $A_v$ is the $v$-adic completion of $A$; see Definition~\ref{dualTatemodule} below. We let $Q_v$ be the fraction field of $A_v$, and we let $Q_\infty$ be the $\infty$-adic completion of $Q$ and $\BC_\infty$ be the completion of a fixed algebraic closure of $Q_\infty$. We fix a $Q$-embedding $Q^\alg\into\BC_\infty$ and consider the base extension of $\ulM$ to $\BC_\infty$. There is a notion of $\ulM$ being \emph{uniformizable} and a uniformizable $\ulM$ has a \emph{Betti realization} $\Koh^1_\Betti(\ulM,A)$; see \cite[\S\,3.5]{HartlJuschka}. These realizations are related by period isomorphisms 
\begin{align*}
h_{\Betti,\,v}\colon \Koh^1_\Betti(\ulM,A)\otimes_A A_v\; & \isoto\;\Koh^1_v(\ulM,A_v)\quad \text{and}\\
h_{\Betti,\,\dR}\colon \Koh^1_\Betti(\ulM,A)\otimes_A\BC_\infty & \isoto \;\Koh^1_{\dR}(\ulM,K)\otimes_K\BC_\infty\;;
\end{align*}
see \cite[Theorem~3.23]{HartlJuschka}. Also for every place $v$ of $Q$, let $\BF_v$ be its residue field and set $q_v:=\#\BF_v=q^{[\BF_v:\BF_q]}$. Let $z:=z_v\in Q$ be a uniformizing parameter at $v$. Then there is a canonical isomorphism $A_v=\BF_v\dbl z_v\dbr$. Let $\zeta:=\zeta_v:=\gamma(z_v)$ denote the image of $z_v$ in $K$. We simply write $z$, resp.\ $\zeta$ for the elements $z\otimes1$, resp.\ $1\otimes\zeta$ of $Q\otimes_{\BF_q}K$. Then the power series ring $K\dbl z-\zeta\dbr$ in the ``variable'' $z-\zeta$ is canonically isomorphic to the completion of the local ring of $C_K:=C\times_{\BF_q}K$ at $\Var(\CJ)$; see \cite[Lemma~1.2 and 1.3]{HartlJuschka}, and thus independent of $v$. We always consider the embedding $Q\into K\dbl z-\zeta\dbr$ given by $z\mapsto z=\zeta+(z-\zeta)$. The de Rham realization lifts to $\Koh^1_\dR(\ulM,K\dbl z-\zeta\dbr):=\sigma^*M\otimes_{A_K}K\dbl z-\zeta\dbr$, which is the analog of the (conjectural) $q$-de Rham cohomology of Bhatt, Morrow and Scholze \cite{BMS15,BMS16,Scholze_q-dR}, and the vector space $\Koh^1_\dR(\ulM,K\dbl z-\zeta\dbr)[\tfrac{1}{z-\zeta}]$ over the field $K\dpl z-\zeta\dpr:=K\dbl z-\zeta\dbr[\tfrac{1}{z-\zeta}]$ contains the $K\dbl z-\zeta\dbr$-lattice $\Fq^\ulM:=\tau_M^{-1}(M\otimes_{A_K}K\dbl z-\zeta\dbr)$, which is called the \emph{Hodge-Pink lattice of $\ulM$} and is the analog of the Hodge-filtration of an abelian variety; see \cite[Remark~5.13]{HartlKim}. 

If $v\ne\infty$ we also fix a $Q$-embedding of $Q^\alg$ into a fixed algebraic closure $Q_v^\alg$ of $Q_v$ and we let $\BC_v$ be the $v$-adic completion of $Q_v^\alg$. Again we denote the image of $z_v$ in $Q_v^\alg$ and $\BC_v$ by $\zeta_v$. We let $K_v\subset Q_v^\alg$ be the induced completion of $K$ and we let $R$ be its valuation ring. There is a period isomorphism by \cite[Remark~4.16]{HartlKim}
\[
h_{v,\dR}\colon\Koh^1_v(\ulM,A_v)\otimes_{A_v}\BC_v\dpl z_v-\zeta_v\dpr\;\isoto\;\Koh^1_\dR\bigl(\ulM,K\dbl z_v-\zeta_v\dbr\bigr)\otimes_{K\dbl z_v-\zeta_v\dbr}\BC_v\dpl z_v-\zeta_v\dpr\,.
\]
The field $\BC_v\dpl z_v-\zeta_v\dpr$ is the analog of Fontaine's $p$-adic period field $\bB_\dR$; see \cite[Remark~4.17]{HartlKim}.

\medskip

Let $\ulM$ have complex multiplication by a commutative, semi-simple $Q$-algebra $E$ of dimension $\dim_QE=\rk\ulM$. Let $\CO_E$ be the integral closure of $A$ in $E$. It is a locally free $A$-module of $\rk_A\CO_E=\dim_QE$. We let $H_E:=\Hom_Q(E,Q^\alg)$ be the set of $Q$-homomorphisms $\psi\colon E\to Q^\alg$ and we assume that $K$ contains $\psi(E)$ for every $\psi\in H_E$. Then by Lemma~\ref{LemDecompdR} in the appendix there is a decomposition $E\otimes_QK\dbl z-\zeta\dbr=\prod_{\psi\in H_E}K\dbl y_\psi-\psi(y_\psi)\dbr$, where $y_\psi$ is a uniformizer at a place of $E$ such that $\psi(y_\psi)\ne0$. Again by \cite[Lemma~1.2 and 1.3]{HartlJuschka} the factors are obtained as the completion of $\CO_E\otimes_A A_K=\CO_E\otimes_{\BF_q}K$ along the kernels $(a\otimes1-1\otimes\psi(a)\colon a\in\CO_E)$ of the homomorphisms $\psi\otimes\id_K\colon\CO_E\otimes_{\BF_q}K\to K$ for $\psi\in H_E$. 
Correspondingly $\Koh^1_\dR(\ulM,K\dbl z-\zeta\dbr)$ decomposes into eigenspaces 
\[
\Koh^\psi(\ulM,K\dbl y_\psi-\psi(y_\psi)\dbr)\;:=\;\Koh^1_\dR(\ulM,K\dbl z-\zeta\dbr)\otimes_{E\otimes_QK\dbl z-\zeta\dbr}\,K\dbl y_\psi-\psi(y_\psi)\dbr
\]
each of which is free of rank $1$ over $K\dbl y_\psi-\psi(y_\psi)\dbr$. There are integers $d_\psi$ such that the Hodge-Pink lattice is $\Fq^\ulM=\prod_\psi(y_\psi-\psi(y_\psi))^{-d_\psi}\Koh^\psi(\ulM,K\dbl y_\psi-\psi(y_\psi)\dbr)$. The tuple $\Phi:=(d_\psi)_{\psi\in H_E}$ is the \emph{CM-type} of $\ulM$. %\comment{Explain this definition.}

If we fix elements $u\in\Koh_{1,\Betti}(\ulM,Q):=\Hom_A\bigl(\Koh^1_\Betti(\ulM,A),Q\bigr)$ and $\omega\in\Koh^1_\dR(\ulM,K\dbl z-\zeta\dbr)$ we can define
\begin{align}
\label{EqIntInfty1} \langle\omega,u\rangle_\infty\;:=\es & u\otimes\id_{\BC_\infty}\bigl(h_{\Betti,\dR}^{-1}(\omega\mod z-\zeta)\bigr)\;\in\;\BC_\infty \qquad\text{and}\\ 
\label{EqIntInfty2} \bigl|\TS\int_u\omega\bigr|_\infty\;:=\es & \bigl|\langle\omega,u\rangle_\infty\bigr|_\infty\;\in\;\BR\,,
\end{align}
where $|\,.\,|_v$ is the normalized absolute value on $\BC_v$ with $|\zeta_v|_v=(\#\BF_v)^{-1}=q_v^{-1}$ for every place $v$. We also consider the valuation $v\colon\BC_v\mal\to\BQ$ on $\BC_v$ with $v(\zeta_v)=1$. The expressions in \eqref{EqIntInfty1} and \eqref{EqIntInfty2} only depend on the image of $\omega$ in $\Koh^1_\dR(\ulM,K)$. Also at a finite place $v\ne\infty$ of $Q$ we consider on elements $x\ne0$ of the discretely valued field $\BC_v\dpl z_v-\zeta_v\dpr$ the valuation $\hat v(x):=\ord_{z_v-\zeta_v}(x)$, and in addition we define %$|x|_v:=\bigl|\bigl((z_v-\zeta_v)^{-\hat v(x)}\cdot x\bigr)\mod z_v-\zeta_v\bigr|_v$ and $v(x):=-\log|x|_v\,/\log q_v$ induced from $\bigl((z_v-\zeta_v)^{-\hat v(x)}\cdot x\bigr)\mod z_v-\zeta_v\in\BC_v$. 
\begin{align*}
& |x|_v:=\bigl|\bigl((z_v-\zeta_v)^{-\hat v(x)}\cdot x\bigr)\mod z_v-\zeta_v\bigr|_v \qquad\text{and} \\[1mm]
& v(x):=-\log|x|_v\,/\log q_v \qquad\text{induced from}\\[1mm]
& \bigl((z_v-\zeta_v)^{-\hat v(x)}\cdot x\bigr)\mod z_v-\zeta_v\in\BC_v\,.
\end{align*}
Note that $|x|_v$ and $v(x)$ are not a norm, respectively a valuation, because they do not satisfy the triangle inequality. The value $|x|_v$ does not depend on the choice of the uniformizer $z_v$ of $A_v$, because if $\tilde z_v=\sum_{n=0}^\infty b_nz_v^n=:f(z_v)$ with $b_n\in\BF_v$ is another uniformizer and $\tilde\zeta_v= f(\zeta_v)$, then $\tfrac{\tilde z_v-\tilde\zeta_v}{z_v-\zeta_v}\equiv f'(\zeta_v)\mod z_v-\zeta_v$ in $\CO_{\BC_v}\dbl z_v\dbr=\BF_v\dbl z_v\dbr\wh\otimes_{\BF_v,\gamma}\CO_{\BC_v}$ by Lemma~\ref{LemDerivative} in the appendix and $f'(\zeta_v)\in\BF_v\dbl\zeta_v\dbr\mal$ with inverse $\tfrac{dz_v}{d\tilde z_v}\big|_{\tilde z_v=\tilde\zeta_v}$.%\comment{ See Remark~\ref{RemValuationOnBdR} below for an interpretation of $|x|_v$ analogous to \cite[p.~637\,f]{Colmez93}.} 
We define
\begin{align}
\label{EqIntv1} \langle\omega,u\rangle_v\;:=\;&u\otimes_{\BC_v\dpl z_v-\zeta_v\dpr}\bigl(h_{\Betti,v}^{-1}\circ h_{v,\dR}^{-1}(\omega)\bigr) \;\in\; \BC_v\dpl z_v-\zeta_v\dpr\qquad\text{and}\\
\label{EqIntv3} \TS\bigl|\int_u\omega\bigr|_v\;:=\;&\bigl|\langle\omega,u\rangle_v\bigr|_v\;:=\;\bigl|\bigl((z_v-\zeta_v)^{-\hat v(\langle\omega,u\rangle_v)}\cdot\langle\omega,u\rangle_v\bigr)\mod z_v-\zeta_v\bigr|_v\;\in\;\BR\,.
\end{align}
We will show in Theorem~\ref{ThmDMPeriod} below that if $E$ is separable over $Q$ and if $\omega\in\Koh^\psi(\ulM,K\dbl y_\psi-\psi(y_\psi)\dbr)$ has non-zero image in $\Koh^1_\dR(\ulM,K)$, then the absolute value \eqref{EqIntv3} only depends on that image.

With these definitions we can now consider the product $\prod_v\bigl|\int_u\omega\bigr|_v$, or equivalently its logarithm $\log\prod_v\bigl|\int_u\omega\bigr|_v=-\sum_v v\bigl(\int_u\omega\bigr)\log q_v$. Like in Colmez's theory, these products or sums do not converge and one has to give a convergent interpretation to their finite parts $\prod_{v\ne\infty}\bigl|\int_u\omega\bigr|_v$, respectively $-\sum_{v\ne\infty} v\bigl(\int_u\omega\bigr)\log q_v$; see Convention~\ref{Convention} below. To formulate the convention we make the following

\begin{Definition}\label{DefArtinMeasure}
For $F=Q$ or $F=Q_v$ let $F^\sep$ be the separable closure of $F$ in $F^\alg$ and let $\sG_F:=\Gal(F^\sep/F)$. For a finite field extension $F'$ of $F$ let $H_{F'}:=\Hom_F(F',F^\alg)$ be the set of $F$-homomorphisms $\psi\colon F'\to F^\alg$. Let $\CC(\sG_F,\BQ)$ be the $\BQ$-vector space of locally constant functions $a\colon\sG_F\to\BQ$ and let $\CC^0(\sG_F,\BQ)$ be the subspace of those functions which are constant on conjugacy classes, that is, which satisfy  $a(h^{-1}gh)=a(g)$ for all $g,h\in\sG_F$. Then the $\BC$-vector space $\CC^0(\sG_F,\BQ)\otimes_\BQ\BC$ is spanned by the traces of representations $\rho\colon\sG_F\to\GL_n(\BC)$ with open kernel for varying $n$ by \cite[\S\,2.5, Theorem~6]{SerreLinRep}. Via the fixed embedding $Q^\sep\into Q_v^\sep$ we consider the induced inclusion $\sG_{Q_v}\subset\sG_Q$ and morphism $\CC(\sG_Q,\BQ)\to\CC(\sG_{Q_v},\BQ)$. If $\chi$ is the trace of a representation $\rho\colon\sG_Q\to\GL_n(\BC)$ with open kernel we let $L(\chi,s):=\prod_{\text{all }v}L_v(\chi,s)$, respectively $L^\infty(\chi,s):=\prod_{v\ne\infty}L_v(\chi,s)$ be the Artin $L$-function of $\rho$, respectively without the factor at $\infty$. It only depends on $\chi$ and converges for all $s \in \BC$ with $\CR e(s)>1$; see \cite[pp.~126ff]{Rosen02}. We also set
\begin{align}\label{EqZFct}
Z(\chi,s)\;:=\; & \frac{\tfrac{d}{ds}L(\chi,s)}{L(\chi,s)}\;=\;-\sum_{\text{all }v}Z_v(\chi,s)\log q_v\qquad\text{and}\\
Z^\infty(\chi,s)\;:=\; & \frac{\tfrac{d}{ds}L^\infty(\chi,s)}{L^\infty(\chi,s)}\;=\;-\sum_{v\ne\infty}Z_v(\chi,s)\log q_v\qquad\text{with}\\
 Z_v(\chi,s)\;:=\; & \frac{\tfrac{d}{ds}L_v(\chi,s)}{-L_v(\chi,s)\cdot\log q_v}\;=\;\frac{\tfrac{d}{dq_v^{-s}}L_v(\chi,s)}{q_v^s\cdot L_v(\chi,s)}\;.
\end{align}
Moreover, we let $\Ff_\chi$ be the Artin conductor of $\chi$. It is an effective divisor $\Ff_\chi=\sum_v \mu_{\Art,v}(\chi)\cdot(v)$ on $C$; see \cite[Chapter~VI, \S\S\,2,3]{SerreLF}, where $\mu_{\Art,v}(\chi)$ is denoted $f(\chi,v)$. We set 
\begin{align}\label{EqMuArt}
\mu_\Art(\chi) \; := \; & \deg(\Ff_\chi)\log q \; := \; \sum_{\text{all }v}\mu_{\Art,v}(\chi)[\BF_v:\BF_q] \log q\; = \; \sum_{\text{all }v}\mu_{\Art,v}(\chi) \log q_v\quad\text{and}\\[2mm]
\mu^\infty_\Art(\chi) \; := \; & \sum_{v\ne\infty}\mu_{\Art,v}(\chi) \log q_v\,.
\end{align}
In particular, only finitely many values $\mu_{\Art,v}(\chi)$ are non-zero. By linearity we extend $Z^\infty(\,.\,,s)$ and $\mu^\infty_\Art$ to all $a\in\CC^0(\sG_Q,\BQ)$ and $Z_v(\,.\,,s)$ and $\mu_{\Art,v}$ to all $a\in\CC^0(\sG_{Q_v},\BQ)$. The map $Z_v(\,.\,,s)$ takes values in $\BQ(q_v^{-s})$. 
\end{Definition}

\medskip

In terms of this definition we prove in this article a formula for $\bigl|\int_u\omega\bigr|_v$ with $v\ne\infty$ for a uniformizable $A$-motive $\ulM$ over $K$ with complex multiplication by a semi-simple \emph{separable} CM-algebra $E$ of CM-type $\Phi=(d_\phi)_{\phi\in H_E}$ as follows. Let us assume that $\CO_E\subset\End_K(\ulM)$, that $K$ is a finite Galois extension of $Q$ which contains $\psi(E)$ for all $\psi\in H_E$, and that $\ulM$ has good reduction at all primes of $K$. (By unpublished results of Schindler~\cite{Schindler} this is no restriction of generality, because for every $A$-motive $\ulM'$ with complex multiplication by a semi-simple separable CM-algebra $E$ there is an $A$-motive $\ulM$ isogenous to $\ulM'$ such that the integral closure $\CO_E$ of $A$ in $E$ is contained in $\End_K(\ulM)$ and $\ulM'$ and $\ulM$ have good reduction everywhere after replacing $K$ by a finite separable extension. Moreover, every $A$-motive over a field extension of $Q$ with $\CO_E\subset\End_K(\ulM)$ is already defined over a finite separable extension $K$ of $Q$). For $\psi\in H_E$ we define the functions
\begin{align}\label{Eq:a_E}
a_{E,\psi,\Phi}\colon\;\sG_Q\to\BZ, & \quad g\mapsto d_{g\psi}\qquad\text{and}\\[2mm]
a^0_{E,\psi,\Phi}\colon\;\sG_Q\to\BQ, & \quad g\mapsto \tfrac{1}{\#H_K}\TS\sum\limits_{\eta\in H_K}d_{\eta^{-1}g\eta\psi}\label{Eq:a0_E}
\end{align}
which factor through $\Gal(K/Q)$. In particular, $a_{E,\psi,\Phi}\in\CC(\sG_Q,\BQ)$ and $a^0_{E,\psi,\Phi}\in\CC^0(\sG_Q,\BQ)$ is independent of $K$. 

Note that $\Koh_{1,\Betti}(\ulM,Q)$ is a free $E$-module of rank $1$ by \cite[Lemma~7.2]{BH2} and that the eigenspace $\Koh^\psi(\ulM,K)\,:=\,\Koh^\psi(\ulM,K\dbl y_\psi-\psi(y_\psi)\dbr)/\bigl(y_\psi-\psi(y_\psi)\bigr)\Koh^\psi(\ulM,K\dbl y_\psi-\psi(y_\psi)\dbr)$ in $\Koh^1_\dR(\ulM,K)$ of the character $\psi\colon E\to K$ is a $K$-vector space of dimension $1$ by Proposition~\ref{PropCMTateMod} below. For an $E$-generator $u\in\Koh_{1,\Betti}(\ulM,Q)$ and a generator $\omega_\psi\in\Koh^\psi(\ulM,K\dbl y_\psi-\psi(y_\psi)\dbr)$ as $K\dbl y_\psi-\psi(y_\psi)\dbr$-module we next define integers $v(\omega_\psi)$ and $v_\psi(u)$ for all $v\ne\infty$ which are all zero except for finitely many. Let $\CO_{E_v}:=\CO_E\otimes_AA_v$ and let $c\in E_v:=E\otimes_Q Q_v$ be such that $c^{-1}u$ is an $\CO_{E_v}$-generator of $\Koh_{1,\Betti}(\ulM,A)\otimes_A A_v$, which exists because $\CO_{E_v}$ is a product of discrete valuation rings. Then $c$ is unique up to multiplication by an element of $\CO_{E_v}\mal$ and we set
\begin{align}\label{EqVPsi}
v_\psi(u)\;:=\;v & \bigl(\psi(c)\bigr)\;\in\;\BZ\,, \\[2mm]
& \text{\color{red} (NOTE THAT \ $v_\psi(u)\;\in\;\BQ$ \ IN GENERAL; SEE ERRATUM~\ref{Erratum1})} \nonumber
\end{align}
where we extend $\psi\in H_E$ by continuity to $\psi\colon E_v\to Q_v^\alg$. Also let $\ul\CM=(\CM,\tau_\CM)$ be an $A$-motive over $R:=\CO_{K_v}$ with good reduction and $\ul\CM\otimes_{\CO_{K_v}}K_v\cong\ulM\otimes_K K_v$; see Example~\ref{AMotLocSht}. Then there is an element $x\in K_v\mal$, unique up to multiplication by $R\mal$, such that $x^{-1}\omega_\psi\mod y_\psi-\psi(y_\psi)$ is an $R$-generator of the free $R$-module of rank one 
\[
\Koh^\psi(\ul\CM,R)\;:=\;\bigl\{\omega\in\Koh^1_\dR(\ul\CM,R):=\sigma^*\CM\otimes_{A_R,\,\gamma\otimes\id_R}R\colon [b]^*\omega=\psi(b)\cdot\omega\es\forall\;b\in\CO_E\bigr\}\,,
\]
and we set
\begin{align}\label{EqValuationOfOmegaPsi}
v(\omega_\psi)\;:=\; & v(x)\;\in\;\BZ\,. \\[2mm]
& \text{\color{red} (NOTE THAT THIS DEFINITION IS WRONG; SEE ERRATUM~\ref{Erratum2})} \nonumber
\end{align}
This value only depends on the image of $\omega_\psi$ in $\Koh^1_\dR(\ulM,K)$. It also does not depend on the choice of the model $\ul\CM$ with good reduction, because all such models are isomorphic over $R$ by \cite[Proposition~2.13(ii)]{Gardeyn4}. In this situation our first main result is the following

\begin{Theorem}\label{MainThm}
Let $\omega_\psi$ be a generator of the $K\dbl y_\psi-\psi(y_\psi)\dbr$-module $\Koh^\psi(\ulM,K\dbl y_\psi-\psi(y_\psi)\dbr)$. For every $\eta\in H_K$ let $\ulM^\eta$ and $\omega_\psi^\eta\in\Koh^{\eta\psi}(\ulM^\eta,K\dbl y_{\eta\psi}-\eta\psi(y_{\eta\psi})\dbr)$ be obtained by extension of scalars via $\eta$, and choose an $E$-generator $u_\eta\in\Koh_{1,\Betti}(\ulM^\eta,Q)$. Then for every place $v\ne\infty$ of $C$ we have
\[
\TS\tfrac{1}{\#H_K}\sum\limits_{\eta\in H_K}v({\TS\int_{u_\eta}\omega_\psi^\eta}) \;=\; Z_v(a^0_{E,\psi,\Phi},1)-\mu_{\Art,v}(a^0_{E,\psi,\Phi})-\dfrac{v(\Fd_{\psi(E)/Q})}{[\psi(E):Q]}+\tfrac{1}{\#H_K}\sum\limits_{\eta\in H_K}\bigl(v(\omega_\psi^\eta)+v_{\eta\psi}(u_\eta)\bigr)\,,
\]
where $\Fd_{\psi(E)/Q}$ is the discriminant of the field extension $\psi(E)/Q$.
\end{Theorem}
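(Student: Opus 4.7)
The plan is to reduce the global period computation to a local one at $v$, exploit the splitting of the CM-algebra $E_v:=E\otimes_Q Q_v=\prod_{w|v}E_w$, and then match the averaged local contributions against the two ingredients of the Artin $L$-factor: the Frobenius action on the inertia invariants (giving $Z_v$) and the conductor exponent (giving $\mu_{\Art,v}$). Since $\ulM$ has good reduction at every place of $K$ above $v$, I would fix a good model $\ul\CM$ over $R$ and pass to the associated local shtuka $\wh M_v$ over $R$. The action of $\CO_E$ extends to $\ul\CM$ by functoriality, so $\CO_{E_v}=\prod_{w|v}\CO_{E_w}$ acts on $\wh M_v$ and yields an idempotent decomposition $\wh M_v=\bigoplus_{w|v}\wh M_{v,w}$ of local shtukas, each having CM by $\CO_{E_w}$ and of rank $[E_w:Q_v]$. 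Correspondingly the $v$-adic Tate module, the de Rham realization, and the Hodge-Pink lattice all decompose along places $w|v$; the eigenspace $\Koh^\psi(\ulM,K\dbl y_\psi-\psi(y_\psi)\dbr)$ lives in the summand indexed by the unique $w$ below $\psi$.

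The main computational step is then the period of a single CM local shtuka. Fix $w|v$ and an embedding $\psi\colon E_w\to Q_v^\alg$; the Hodge-Pink data of $\wh M_{v,w}$ is determined by the integers $d_{\psi'}$ for the various $Q_v$-embeddings $\psi'$ of $E_w$ into $Q_v^\alg$ restricting to $\psi|_E$ on $E$. For each $\eta\in H_K$, the pairing $\langle\omega_\psi^\eta,u_\eta\rangle_v\in\BC_v\dpl z_v-\zeta_v\dpr$ can be computed via the period isomorphism $h_{v,\dR}^{-1}$ for $\wh M_{v,w}^\eta$ applied to $\omega_\psi^\eta$ and then paired with the $E_v$-component of $u_\eta$ that supports it. Using the freeness of $\Koh^1_v(\ul\CM,A_v)$ as an $\CO_{E_v}$-module (which is where $\CO_E\subset\End_K(\ulM)$ is used), this pairing, up to an $R\mal$-unit, is a product of ``periods'' of rank-one CM local shtukas, whose valuations can be read off from the $d_{\psi'}$ and the ramification of $E_w/Q_v$. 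The two correction terms $v(\omega_\psi^\eta)$ and $v_{\eta\psi}(u_\eta)$ appear precisely as the discrepancy between the chosen generators $\omega_\psi^\eta,\,u_\eta$ and the canonical $R$-, respectively $\CO_{E_v}$-generators produced by the local shtuka structure.

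After the local computation, averaging over $\eta\in H_K$ amounts to summing over $\Gal(K/Q)$-orbits on $H_E$, which by definition of $a^0_{E,\psi,\Phi}(g)=\tfrac1{\#H_K}\sum_\eta d_{\eta^{-1}g\eta\psi}$ realizes each local valuation as a weighted character value of $a^0_{E,\psi,\Phi}$ on the decomposition group at $v$. For the unramified part of this representation, the sum of $d_{\psi'}$ over embeddings $\psi'$ in a given Frobenius orbit is exactly what is encoded in the logarithmic derivative of $L_v(a^0_{E,\psi,\Phi},s)=\det\bigl(1-q_v^{-s}\Frob_v\mid V^{I_v}\bigr)^{-1}$ at $s=1$, producing the term $Z_v(a^0_{E,\psi,\Phi},1)$. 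The ramified contributions, coming from places $w|v$ with non-trivial inertia in $E_w/Q_v$, collect into the Artin conductor $\mu_{\Art,v}(a^0_{E,\psi,\Phi})$ via the standard formula expressing $\mu_{\Art,v}(\chi)$ in terms of the ramification filtration acting on $V/V^{I_v}$.

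The hard part will be the ramified case: in the tame but non-trivial inertia situation and especially under wild ramification of $E_w/Q_v$, the valuation of the local period of a CM local shtuka is not simply a sum of Hodge-Pink weights, and one must analyze the $\sigma$-linear structure on $\wh M_{v,w}$ explicitly, typically by passing to a splitting field for $E_w$, computing periods of rank-one pieces there, and then descending by Galois averaging. Matching the resulting sum against the conductor exponent requires showing that the contribution from each break of the ramification filtration coincides with the corresponding term in $\mu_{\Art,v}$; this identification is the core technical content, and it is precisely where the phrasing of the theorem with the correction terms $v(\omega_\psi^\eta)+v_{\eta\psi}(u_\eta)$ becomes essential to absorb unit ambiguities that would otherwise obstruct a clean Artin-theoretic answer.
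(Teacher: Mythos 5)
Your proposal follows the paper's overall strategy closely — passing to the local shtuka at $v$, decomposing along $E_v=\prod_w E_w$, computing periods of rank-one pieces with CM, averaging over $\eta$, and matching against $Z_v$ and $\mu_{\Art,v}$. The decomposition of the local shtuka you describe (Proposition~\ref{PropFree}), the role of good reduction, and the role of the normalization terms $v(\omega_\psi^\eta)$ and $v_{\eta\psi}(u_\eta)$ all line up with the paper's Theorem~\ref{ThmDMPeriod} and Corollary~\ref{CorDMPeriod}.

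However, your proposal leaves the ``core technical content'' explicitly open, and the method you sketch for resolving it is not the one the paper uses and would be harder. You anticipate analyzing the $\hat\sigma$-linear structure ``by passing to a splitting field... and then descending by Galois averaging'' and ``matching the resulting sum against the conductor exponent... break by break'' through the ramification filtration. The paper instead factors the Frobenius $\tau_{\hat M}$ component-wise as $\tau_{i,j}=\epsilon_{i,j}\prod_\phi(y_i-\phi(y_i))^{d_\phi}$ (Lemma~\ref{LemFactors}, display~\eqref{EqTauij}), writes the local shtuka as a tensor product $\ulHM_{E_v,0}\otimes\bigotimes_\phi\ulHM_{E_v,\phi}{}^{\otimes d_\phi}$ over $\CO_{E_v,R}$, and for each $\ulHM_{E_v,\phi}$ directly solves the functional equation $\hat\sigma^{f_i}(\tplus{y_i,\xi})=(y_i-\xi)\cdot\tplus{y_i,\xi}$ to get an explicit Lubin--Tate-type period $\Omega(E_v,\phi,\psi)$. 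Its valuation is computed case-by-case in Theorem~\ref{ThmOmega}, producing the term $-v(\FD_{\phi(E_{v,i})/Q_v})$ when $\phi=\psi$ and elementary quantities otherwise; this works uniformly in the (separably) wild case with no descent. The translation to the Artin conductor then comes not from the ramification filtration directly but from the measure $\mu_L$ of Lemma~\ref{LemmaIndL} (imported verbatim from Colmez), which repackages the filtration in terms of $v(g(\pi_L)-\pi_L)$ and $v(\FD_{L/Q_v})$. Without the explicit $\tplus{y_i,\xi}$ and Theorem~\ref{ThmOmega}\ref{ThmOmega_B}, the identification you posit — that each break contributes the expected term — is asserted rather than proven, and your proposed descent route would require additional compatibility arguments that the paper's direct computation avoids entirely.
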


We will prove this theorem at the end of Section~\ref{SectPeriods} by using the \emph{local shtuka} at $v$ attached to $\ulM$. The latter is an analog of the Dieudonn\'e-module of the $p$-divisible group attached to an abelian variety; see \cite[\S\,3.2]{HartlDict}. The theorem allows us to make the following convention which is the analog of \cite[Convention~0]{Colmez93}.

\begin{Convention}\label{Convention} Let $(x_v)_{v\ne\infty}$ be a tuple of complex numbers indexed by the finite places $v$ of $Q$. We will give a sense to the (divergent) series $\Sigma \stackrel{?}{=} \sum_{v\ne \infty} x_v$ in the following situation. We suppose that there exists an element $a\in\CC^0(\sG_Q,\BQ)$ such that $x_v = - Z_v(a,1) \log q_v$  for all $v$ except for finitely many. Then we let $a^*\in\CC^0(\sG_Q,\BQ)$ be defined by $a^*(g):=a(g^{-1})$. We further assume that $Z^\infty(a^*,s)$ does not have a pole at $s=0$, and we define the limit of the series $\sum_{v\ne \infty} x_v$ as
\begin{equation}\label{EqConvention}
\Sigma\;:=\;-Z^\infty(a^*,0) - \mu^\infty_\Art(a) +\sum_{v\neq \infty} \bigl(x_v + Z_v(a,1)\log q_v\bigr)
\end{equation}
inspired by Weil's \cite[p.~82]{WeilRH} functional equation 
\[
Z(\chi,1-s)\;=\;-Z(\chi^*,s)-(2\cdot genus(C)-2)\chi(1)\log q-\mu_\Art(\chi)
\]
deprived of the summands at $\infty$, where the genus term is considered as belonging to $\infty$.
\end{Convention}

The Convention~\ref{Convention} and the Theorem~\ref{MainThm} allow us to give a convergent interpretation to the sum $-\sum_v\sum_{\eta\in H_K} v\bigl(\int_{u_\eta}\omega_\psi^\eta\bigr)\log q_v$ and the product $\prod_v\prod_{\eta\in H_K}\bigl|\int_{u_\eta}\omega_\psi^\eta\bigr|_v$, and we can ask whether this product is $1$. In Section~\ref{SectCarlitz} we prove our second main result, namely that the answer to the question is ``yes'' in the easiest case of the \emph{Carlitz motive} which is related to the zeta function of $\BF_q[t]$ and is the analog of the multiplicative group $\BG_{m,\BQ}$ considered by Colmez. For general CM $A$-motives we plan to address the question in a sequel to this article and also discuss its consequences for the Faltings height of CM $A$-motives similar to \cite[Th\'eor\`eme~0.3 and Conjecture~0.4]{Colmez93} and conditions under which $Z^\infty(a^*,s)$ does not have a pole at $s=0$. 

Let us describe the structure of this article. In Section~\ref{LocSht} we recall from \cite{HartlKim} the definition of local shtukas, how to attach a local shtuka at $v\subset A$ to an $A$-motive $\ulM$ over $L$ with good reduction, and we discuss its cohomology realizations. In Section~\ref{Amotivescm} we define the notions of complex multiplication and CM-type of a local shtuka, and in Section~\ref{SectPeriods} we compute the periods and their valuations of a local shtuka with complex multiplication, and we prove Theorem~\ref{MainThm}. Finally in Appendix~\ref{Appendix} we prove the facts used above.

\section{The Carlitz-Motive}\label{SectCarlitz}
\setcounter{equation}{0}

Let $ A = \BF_q[t]$ and $C = \BP^1_{\BF_q}$. Let $K = \BF_q(\theta)$ be the rational function field in the variable $\theta$ and let  $\gamma : A\to K$ be given by $\gamma(t) = \theta$. We also set $z:=z_\infty:=\tfrac{1}{t}$ and $\zeta:=\zeta_\infty:=\tfrac{1}{\theta}$. It satisfies $|\zeta|_\infty=q^{-1}<1$. The \emph{Carlitz motive} over $K$ is the $A$-motive $\ulCC = (K[t], \tau_\CC = t-\theta)$ which is associated with the Carlitz module; see \cite{Carlitz35} or \cite[Chapter~3]{Goss}. It has rank $1$ and dimension $1$, and complex multiplication by the ring of integers $A$ in $E:=Q$ with CM-type $\Phi=(d_\id)$, where $H_E=\{\id\}$ and $d_\id=1$. As is well known, its cohomology satisfies $\Koh^1_\dR(\ulCC,K\dbl z-\zeta\dbr)=K\dbl z-\zeta\dbr$ and $\Koh^1_\Betti(\ulCC,A)= A\cdot\beta\tminus{\zeta}$, where $\beta\in\BC_\infty$ satisfies $\beta^{q-1}=-\zeta$ and $\tminus{\zeta}:=\prod_{i=0}^\infty(1-\zeta^{q^i}t)$; see for example \cite[Example~3.34]{HartlJuschka}. We denote the generator $1$ of $\Koh^1_\dR(\ulCC,K\dbl z-\zeta\dbr)$ by $\omega$ and we take $u\in\Koh_{1,\Betti}(\ulCC,\BF_q[t])$ as the generator which is dual to $\beta\tminus{\zeta}\in\Koh^1_\Betti(\ulCC,\BF_q[t])$. The de Rham isomorphism $h_{\Betti,\dR}$ sends $\beta\tminus{\zeta}$ to 
\[
\sigma^*(\beta\tminus{\zeta})\cdot\omega\;=\;\beta^q\sigma^*(\tminus{\zeta})\cdot\omega\;\in\;\Koh^1_\dR(\ulM,\BC_\infty\dbl z-\zeta\dbr)\;=\;\BC_\infty\dbl z-\zeta\dbr\cdot\omega, 
\]
respectively to $\beta^q\sigma^*(\tminus{\zeta})|_{t=\theta}\cdot\omega=\beta^q\prod_{i=1}^\infty(1-\zeta^{q^i-1})\cdot\omega\in\Koh^1_\dR(\ulM,\BC_\infty)=\BC_\infty\cdot\omega$. Here the coefficient $\beta^q\prod_{i=1}^\infty(1-\zeta^{q^i-1})$ is the function field analog of the complex number $(2i\pi)^{-1}$, the inverse of the period of the multiplicative group $\BG_{m,\BQ}$. We obtain
\[
\TS\bigl|\int_u\omega\bigr|_\infty\;=\;\bigl|\bigl(\beta^q\prod\limits_{i=1}^\infty(1-\zeta^{q^i-1})\bigr)^{-1}\bigr|_\infty\;=\;|\beta|_\infty^{-q}\;=\;q^{q/(q-1)}.
\]
At a finite place $v\subset\BF_q[t]$ let $v=(z_v)$ and $\zeta_v=\gamma(z_v)$. Then $\Koh^1_v(\ulM,A_v)= A_v\cdot(\tplus{\zeta_v})^{-1}$, where $\tplus{\zeta_v}:=\sum_{n=0}^\infty \tplusminus_nz_v^n\in\BC_v\dbl z_v\dbr$ with $\tplusminus_0^{q_v-1}=-\zeta_v$ and $\tplusminus_n^{q_v}+\zeta_v \tplusminus_n=\tplusminus_{n-1}$; see \cite[Example~4.19]{HartlKim}. This implies $|\tplusminus_n|=|\zeta_v|^{q_v^{-n}/(q_v-1)}<1$. The $v$-adic comparison isomorphism $h_{v,\dR}$ sends the generator $\tplus{\zeta_v}$ of $\Koh^1_v(\ulM,A_v)$ to 
\[
(z_v-\zeta_v)^{-1}(\tplus{\zeta_v})^{-1}\cdot\omega\;\in\;\Koh^1_\dR\bigl(\ulM,\BC_v\dpl z_v-\zeta_v\dpr\bigr)\;=\;\BC_v\dpl z_v-\zeta_v\dpr\cdot\omega, 
\]
where the coefficient of $\omega$ is the \emph{$v$-adic Carlitz period} which has a pole of order one at $z_v=\zeta_v$. So $\langle\omega,u\rangle_v=(z_v-\zeta_v)\tplus{\zeta_v}$ has $\hat v(\langle\omega,u\rangle_v)=1$ and 
\[
\TS\bigl|\int_u\omega|_v\;=\;\bigl|\tplus{\zeta_v}\mod z_v-\zeta_v\bigr|_v\;=\;\bigl|\sum\limits_{n=0}^\infty \tplusminus_n\zeta_v^n\bigr|_v\;=\;|\tplusminus_0|_v\;=\;q_v^{-1/(q_v-1)}
\]
So the product $\prod_v \bigl|{\TS\int_u\omega}\bigr|_v$ of the norms at all places has logarithm
\begin{equation}\label{logpro}
\log\prod_{\text{all }v} \bigl|{\TS\int_u\omega}\bigr|_v\;=\; \log\bigl|{\TS\int_u\omega}\bigr|_\infty+\log \prod_{v \ne \infty}\bigl|{\TS\int_u\omega}\bigr|_v\;=\;\frac{q}{q-1}\log q + \sum_{v\ne \infty }\frac{-1}{q_v-1}\log q_v\,. 
\end{equation}
Note that this series is not convergent, but that the sum over $v \ne \infty$ is equal to $\frac{\zeta'_A(1)}{\zeta_A(1)}$ and the summand at $\infty$ is equal to $\frac{\zeta'_A(0)}{\zeta_A(0)}$, where $\zeta_A$ is the zeta function associated with $A$, which does not converge at $s=1$. Namely, the zeta functions are defined as the following products which converge for $s \in \BC$ with $\CR e(s)>1$ 
\begin{align*}
& \zeta_C(s) \;:=\; \prod_{\text{all }v}(1-(\#\BF_v)^{-s})^{-1} \;=\; \prod_{\text{all }v}(1-q_v^{-s})^{-1} \;=\; \frac{1}{(1-q^{-s})(1-q^{1-s})} \qquad\text{and}\\
& \zeta_A(s) \;:=\; \prod_{  v \ne \infty}(1-(\#\BF_v)^{-s})^{-1} \;=\; \prod_{v \ne \infty}(1-q_v^{-s})^{-1} \;=\; \frac{1}{1-q^{1-s}}\ ;
\end{align*}
see for example\cite[Chapter~V, Example~2.1]{Silverman1}. In particular $(1-q_v^{-s})^{-1}=L_v(\BOne,1)$ and $\zeta_A(s)=L^\infty(\BOne,s)$ where $\BOne\colon g\mapsto1$ is the trivial character in $\CC^0(\sG_Q,\BQ)$, which equals $a_{Q,\id,\Phi}=a^0_{Q,\id,\Phi}$. Then $\frac{1}{q_v^s-1}= Z_v(\BOne,s)$ and $\tfrac{\zeta_A'(s)}{\zeta_A(s)}=Z^\infty(\BOne,s)$ in the notation of Definition~\ref{DefArtinMeasure}. Applying Convention~\ref{Convention} with $a=\BOne$ and $\mu_\Art(\BOne)=0$ and $genus(\BP^1_{\BF_q})=0$ we define the limit of the series as
\begin{align*}
\frac{q}{q-1}\log q + \sum_{v\ne \infty }\frac{-1}{q_v-1}\log q_v & := \frac{q}{q-1}\log q - \sum_{v\ne \infty }Z_v(\BOne,1) \log q_v \\
                                                                                & = \frac{q}{q-1}\log q - Z^\infty(\BOne,0) \\
                                                                               & =  \frac{q} {q-1}\log q - \frac{\zeta'_A(0)}{\zeta_A(0)}\\
                                                                               & = 0.
\end{align*}
So the value of the product $\prod_v \bigl|{\TS\int_u\omega}\bigr|_v$ is $1$ for the Carlitz motive.

\section{Local Shtukas}\label{LocSht}
\setcounter{equation}{0}

In the rest of the article we fix a place $v\ne\infty$ of $Q$. We keep the notation from the introduction, except that we write $L=\kappa\dpl\pi_L\dpr$ for the field $K_v$ and let $R=\kappa\dbl\pi_L\dbr$ be its valuation ring. We write $z:=z_v$. Then $A_v=\BF_v\dbl z\dbr$ and $Q_v=\BF_v\dpl z\dpr$. The homomorphism $\gamma\colon A\to K$ extends by continuity to $\gamma\colon A_v\to L$ and factors through $\gamma\colon A_v\to R$ with $\zeta:=\zeta_v=\gamma(z)\in\pi_L R\setminus\{0\}$. Let $R\dbl z\dbr$ be the power series ring in the variable $z$ over $R$ and $\hat\sigma$ the endomorphism of $R\dbl z\dbr$ with $\hat\sigma(z)=z$ and $\hat\sigma(b)=b^{q_v}$ for $b\in R$, where $q_v=\#\BF_v$. For an $R\dbl z\dbr$-module $\hat M$ we let $ \hat\sigma^* \hat M:= \hat M\otimes_{R\dbl z \dbr, \hat\sigma^*}R\dbl z\dbr$ as well as  $\hat M[\frac{1}{z-\zeta}]:=\hat M\otimes_{R\dbl z\dbr}R\dbl z\dbr[\frac{1}{z-\zeta}]$ and  $\hat M[\frac{1}{z}]:=\hat M\otimes_{R\dbl z\dbr}R\dbl z\dbr[\frac{1}{z}]$.

\begin{Definition} \label{LocalshtDef}
A \emph{local $\hat\sigma$-shtuka of rank $r$} over $R$ is a pair $\ulHM = (\hat M,\tau_{\hat M})$ consisting of a free $R\dbl z\dbr$-module $\hat M$ of rank $r$, and an isomorphism $\tau_{\hat M}:\hat\sigma^* \hat M[\frac{1}{z-\zeta}] \isoto \hat M[\frac{1}{z-\zeta}]$. It is \emph{effective} if $\tau_{\hat M}(\hat\sigma^*\hat M)\subset \hat M$ and \emph{\'etale} if $\tau_{\hat M}(\hat\sigma^*\hat M)= \hat M$. We write $\rk\ulHM$ for the rank of $\ulHM$.

A \emph{morphism} of local shtukas $f:\ulHM=(\hat M,\tau_{\hat M})\to\ulHN=(\hat N,\tau_{\hat N})$ over $R$ is a morphism of the underlying modules $f:\hat M\to \hat N$ which satisfies $\tau_{\hat N}\circ \hat\sigma^* f = f\circ \tau_{\hat M}$. We denote the $A_v$-module of homomorphisms $f\colon\ulHM\to\ulHN$ by $\Hom_R(\ulHM,\ulHN)$ and write $\End_R(\ulHM)=\Hom_R(\ulHM,\ulHM)$.

A \emph{quasi-morphism} between local shtukas $f\colon(\hat M,\tau_{\hat M})\to(\hat N,\tau_{\hat N})$ over $R$ is a morphism of $R\dbl z\dbr[\tfrac{1}{z}]$-modules $f\colon M[\tfrac{1}{z}]\isoto N[\tfrac{1}{z}]$ with $\tau_{\hat N}\circ\hat\sigma^*f=f\circ\tau_{\hat M}$. It is called a \emph{quasi-isogeny} if it is an isomorphism of $R\dbl z\dbr[\tfrac{1}{z}]$-modules. We denote the $Q_v$-vector space of quasi-morphisms from $\ulHM$ to $\ulHN$ by $\QHom_R(\ulHM,\ulHN)$ and write $\QEnd_R(\ulHM)=\QHom_R(\ulHM,\ulHM)$. 
\end{Definition}

Note that $\Hom_R(\ulHM,\ulHN)$ is a finite free $A_v$-module of rank at most $\rk\ulHM\cdot\rk\ulHN$ by \cite[Corollary~4.5]{HartlKim} and $\QHom_R(\ulHM,\ulHN)=\Hom_R(\ulHM,\ulHN)\otimes_{A_v}Q_v$. Also every quasi-isogeny $f\colon\ulHM\to\ulHN$ induces an isomorphism of $Q_v$-algebras $\QEnd_R(\ulHM)\isoto\QEnd_R(\ulHN)$, $g\mapsto fgf^{-1}$.

\begin{Example}\label{AMotLocSht} 
We assume that the $A$-motive $\ulM=(M, \tau_M)$ has \emph{good reduction}, that is, there exist a pair $\ul\CM = (\CM, \tau_{\CM})$ consisting of a locally free module $\CM$ over $A_R:=A\otimes_{\BF_q} R$ of finite rank and an isomorphism $\tau_\CM:\sigma^*\CM|_{\Spec A_R\setminus\Var(\CJ)} \isoto \CM|_{\Spec A_R\setminus\Var(\CJ)}$ of the associated sheaves outside the vanishing locus $\Var(\CJ)\subset\Spec A_R$ of the ideal $\CJ: = (a\otimes1-1\otimes \gamma( a): \ a\in A)\subset A_R$, such that $\ul\CM\otimes_RL\cong\ulM$. The reduction $\ul\CM \otimes_R \kappa$ is an $A$-motive over $\kappa$ of $A$-characteristic $v= \ker(\gamma: A\to \kappa)$. The pair $\ul\CM$ is called an \emph{$A$-motive over $R$} and a \emph{good model} of $\ulM$.

We consider the $v$-adic completions $A_{v, R}$ of $A_R$ and $\ul\CM \otimes_{A_R}A_{v, R}: = (\CM \otimes_{A_R}A_{v, R}, \tau_\CM\otimes \id)$ of $\ul\CM$. We let $d_v:=[\BF_v:\BF_q]$ and discuss the two cases $d_v = 1$ and $d_v> 1$ separately. If $d_v = 1$, and hence  $q_v = q$ and $\hat\sigma =\sigma$, we have $A_{v, R} = R\dbl z \dbr$, and $\ul\CM \otimes_{A_R}A_{v, R}$ is a local $\hat\sigma$-shtuka over $\Spec R$ which we denote by  $\ulHM_v(\ul\CM)$ and call the \emph{local shtuka at $v$ associated with $\ul\CM$}.

If $d_v> 1$, the situation is more complicated, because $\BF_v \otimes_{\BF_q}R$ and $A_{v,R}$ fail to be integral domains. Namely,
\[
\BF_v\otimes_{\BF_q} R = \prod_{\Gal(\BF_{v}/\BF_q)}\BF_v\otimes_{\BF_{v}}R = 
\prod_{i\in\BZ/d_v\BZ}\BF_v\otimes_{\BF_q} R\,/\,(a\otimes 1-1\otimes \gamma(a)^{q^i}:a\in \BF_{v})
\]
and $\sigma$ transports the $i$-th factor to the $(i+1)$-th factor. In particular $\hat\sigma$ stabilizes each factor. Denote by $\Fa_i$ the ideal of $A_{v,R}$ generated by $\{a\otimes 1-1\otimes \gamma(a)^{q^i}:a\in \BF_{v}\}$. Then
\[
A_{v,R} = \prod_{\Gal(\BF_{v}/\BF_q)}A_v\wh\otimes_{\BF_{v}}R = 
\prod_{i\in\BZ/d_v\BZ}A_{v,R}\,/\,\Fa_i.
\]
Note that each factor is isomorphic to $R\dbl z \dbr$ and the ideals $\Fa_i$ correspond precisely to the places $v_i$ of $C_{\BF_{v}}$ lying above $v$. The ideal $\CJ$ decomposes as follows $ \CJ\cdot A_{v,R}/\Fa_0 = (z-\zeta)$  and  $ \CJ\cdot A_{v,R}/\Fa_i = (1)$ for $i  \neq 0$. We define the \emph{local shtuka at $v$ associated with $\ul\CM$} as $\ulHM_v(\ul\CM):=(\hat M,\tau_{\hat M}):=\bigl(\CM\otimes_{A_R}A_{v,R}/\Fa_0\,,\,(\tau_\CM\otimes1)^{d_v}\bigr)$, where $\tau_\CM^{d_v}:=\tau_\CM\circ\sigma^*\tau_\CM\circ\ldots\circ\sigma^{(d_v-1)*}\tau_\CM$. Of course if $d_v=1$ we get back the definition of $\ulHM_v(\ul\CM)$ given above. Also note if $\ul\CM$ is effective, then $\CM/\tau_\CM(\sigma^*\CM)=\hat M/\tau_{\hat M}(\hat\sigma^*\hat M)$.

The local shtuka $\ulHM_v(\ul\CM)$ allows to recover $\ul\CM\otimes_{A_R}A_{v,R}$ via the isomorphism
\[
\bigoplus_{i=0}^{d_v-1}(\tau_\CM\otimes1)^i\mod\Fa_i\colon\Bigl(\bigoplus_{i=0}^{d_v-1}\sigma^{i*}(\CM\otimes_{A_R}A_{v,R}/\Fa_0),\;(\tau_\CM\otimes1)^{d_v}\oplus\bigoplus_{i\ne0}\id\Bigr)\;\isoto\;\ul\CM\otimes_{A_R}A_{v,R}\,,
\]
because for $i\ne0$ the equality $\CJ\!\cdot\! A_{v,R}/\Fa_i=(1)$ implies that $\tau_\CM\otimes 1$ is an isomorphism modulo $\Fa_i$; see \cite[Propositions~8.8 and 8.5]{BH1} for more details.
\end{Example}

Next we define the $v$-adic realization and the de Rham realization of a local shtuka $\ulHM = (\hat M, \tau_{\hat M})$ over $R$. Since $\tau_{\hat M}$ induces an isomorphism   $\tau_{\hat M}:  \hat\sigma^* \hat M  \otimes_{R\dbl z\dbr}L\dbl z\dbr \isoto \hat M  \otimes_{R\dbl z\dbr}L\dbl z\dbr$, because $z-\zeta\in L\dbl z\dbr\mal$, we can think of $\ulHM\otimes_{R\dbl z\dbr}L\dbl z\dbr$  as an \'etale local shtuka over $L$.

 \begin{Definition}\label{dualTatemodule} The \emph{$v$-adic realization} $\Koh^1_v(\ulHM, A_v)$ of a local $\hat\sigma$-shtuka $\ulHM = (\hat M, \tau_{\hat M})$ is the $\sG_L:=\Gal(L^{\sep}/L)$-module of $\tau$- invariants
\[
\Koh^1_v(\ulHM, A_v)\;:=\; (\hat M \otimes_{R\dbl z\dbr}L^\sep\dbl z\dbr)^{ \tau} : = \{m \in \hat M \otimes_{R\dbl z\dbr}L^\sep\dbl z\dbr:  \tau_{\hat M}( \hat\sigma_{\!\hat M}^* m) = m \} ,
\]
where we set $\hat\sigma_{\!\hat M}^*m:=m\otimes1\in\hat M\otimes_{R\dbl z\dbr,\hat\sigma}R\dbl z\dbr=:\sigma^*M$ for $m\in M$. One also writes sometimes $\check T_v\ulHM=\Koh^1_v(\ulHM, A_v)$ and calls this the \emph{dual Tate module of $\ulHM$}. By \cite[Proposition~4.2]{HartlKim} it is a free $A_v$-module of the same rank as  $\hat M$. We also write $\Koh^1_v(\ulHM,B):=\Koh^1_v(\ulHM,A_v)\otimes_{A_v}B$ for an $A_v$-algebra $B$.

If $\ulM = (M, \tau_M)$ is an $A$-motive over $L$ with good model $\ul\CM$ and $\ulHM = \ulHM_v(\ul\CM)$ is the local shtuka at $v$ associated with $\ul\CM$, then $\Koh^1_v(\ulHM, A_v)$ is by \cite[Proposition~4.6]{HartlKim} canonically isomorphic as a representation of $\sG_L$ to the \emph{$v$-adic realization of $\ulM$}, which is defined as 
\[
\Koh^1_v(\ulM, A_v)\;:=\;\{m\in M\otimes_{A_L}A_{v,L^\sep}\colon \tau_M(\sigma_{\!M}^*m)=m\}, 
\]
where we set $\sigma_{\!M}^*m:=m\otimes1\in M\otimes_{A_R,\sigma}A_R=:\sigma^*M$ for $m\in M$ and where $A_{v,L^\sep}$ is the $v$-adic completion of $A_{L^\sep}$.
\end{Definition}

\begin{Definition}\label{DefDeRham}
Let  $\ulHM = (\hat M, \tau_{\hat M})$  be a local $ \hat\sigma$-shtuka over $R$. We define the \emph{de Rham realizations} of  $\ulHM$ as
\begin{align*}
\Koh^1_{\dR}(\ulHM, R )\;:= \;& \hat\sigma^* \hat{ M}/(z-\zeta)\hat\sigma^* \hat M \;=\;\hat\sigma^* \hat{ M} \otimes_{R\dbl z\dbr, z\mapsto \zeta} R\,, \quad\text{as well as}\\[1mm]
\Koh^1_{\dR}(\ulHM, L \dbl z -\zeta\dbr)\;:=\; & \hat\sigma^* \hat{ M} \otimes_{R\dbl z\dbr}  L\dbl z -\zeta\dbr \quad\text{and}  \\[1mm]
\Koh^1_{\dR}(\ulHM, L )\;:= \;& \hat\sigma^* \hat{ M} \otimes_{R\dbl z\dbr, z\mapsto \zeta} L \; =\;  \Koh^1_{\dR}(\ulHM, L \dbl z -\zeta\dbr)\otimes_{ L \dbl z -\zeta\dbr} L\dbl z -\zeta\dbr/(z -\zeta)\\[1mm]
=\; & \Koh^1_{\dR}(\ulHM, R )\otimes_RL\,.
\end{align*}
It carries the \emph{Hodge-Pink lattice} $\Fq^\ulHM:=\tau_{\hat M}^{-1}(\hat M\otimes_{R\dbl z\dbr}L\dbl z-\zeta\dbr)\subset \Koh^1_{\dR}(\ulHM, L \dbl z -\zeta\dbr)[\tfrac{1}{z-\zeta}]$. We also write $\Koh^1_\dR(\ulHM,B):=\Koh^1_\dR(\ulHM,L\dbl z-\zeta\dbr)\otimes_{L\dbl z-\zeta\dbr}B$ for an $L\dbl z-\zeta\dbr$-algebra $B$.

If $\ulM = (M, \tau_M)$ is an $A$-motive over $L$ with good model $\ul\CM$ and $\ulHM = \ulHM_v(\ul\CM)$ is the local shtuka at $v$ associated with $\ul\CM$ and $d_v = [\BF_v: \BF_q]$ is as in Example \ref{AMotLocSht}, the map 
\[
\sigma^*\tau_M^{{d_v}-1}=\sigma^*\tau_M\circ\sigma^{2*}\tau_M\circ\cdots\circ\sigma^{({d_v}-1)*}\tau_M\colon\sigma^{{d_v}*}M\otimes_{A_R}A_{v,R}/\Fa_0\isoto\sigma^*M\otimes_{A_R}A_{v,R}/\Fa_0
\]
is an isomorphism, because $\tau_M$ is an isomorphism  over $A_{v, R}/\Fa_i$ for all  $i\neq 0$. Therefore it defines canonical isomorphisms of the de Rham realizations 
\begin{align*}
\sigma^*\tau_M^{{d_v}-1}\colon & \Koh^1_\dR\bigl(\ulHM,L\dbl z-\zeta\dbr\bigr)\isoto\Koh^1_\dR\bigl(\ulM,L\dbl z-\zeta\dbr\bigr)\quad\text{and}\\
\sigma^*\tau_M^{{d_v}-1}\colon & \Koh^1_\dR(\ulHM,L)\isoto\Koh^1_\dR(\ulM,L)\,,
\end{align*}
which are compatible with the Hodge-Pink lattices.
\end{Definition}

\begin{Remark}
By \cite[Theorem~4.15]{HartlKim} there is a canonical comparison isomorphism 
\begin{equation}\label{EqHv,dR}
h_{v,\dR}\colon\;\Koh^1_v\bigl(\ulHM,\BC_v\dpl z-\zeta\dpr\bigr)\;\isoto\;\Koh^1_\dR\bigl(\ulHM,\BC_v\dpl z-\zeta\dpr\bigr)
\end{equation}
which is equivariant for the action of $\sG_L$. For our computations below we need an explicit description of $h_{v,\dR}$. It is constructed as follows. The natural inclusion $\Koh^1_v(\ulHM, A_v) \subset \hat M \otimes_{R\dbl z\dbr}L^\sep\dbl z\dbr$ defines a canonical isomorphism of $L^\sep\dbl z\dbr$-modules
\begin{equation}\label{LocTatIso}
\Koh^1_v(\ulHM, A_v) \otimes_{A_v}L^\sep\dbl z\dbr \;\isoto\; {\hat M} \otimes_{R\dbl z\dbr}L^\sep\dbl z\dbr\,,
\end{equation}
which  is  $\sG_L$ and $\tau$-equivariant, where on the left module $\sG_L$ acts on both factors  and $\tau$ is $\id \otimes  \hat\sigma$ and on the right module $\sG_L$ acts only on $L^\sep\dbl z\dbr$ and $\tau$ is $(\tau_{\hat M}\circ  \hat\sigma^*_{\hat M}) \otimes \hat\sigma$. Since $(L^\sep)^{\sG_L} = L$ we obtain 
\[
 {\hat M} \otimes_{R\dbl z\dbr}L\dbl z\dbr \;=\; (\Koh^1_v(\ulHM, A_v)\otimes_{A_{v}}L^\sep\dbl z\dbr)^{\sG_L}.
\]
It turns out, see \cite[Remark~4.3]{HartlKim}, that the isomorphism (\ref {LocTatIso}) extends to an equivariant isomorphism 
\begin{equation}\label{LocTatIso2}
h\colon\, \TS \Koh^1_v(\ulHM, A_v) \otimes_{A_v}L^\sep {\langle{\frac{z}{\zeta}}\rangle} \; \isoto \; {\hat M} \otimes_{R\dbl z\dbr}L^\sep\langle \frac{z}{\zeta}\rangle\,,
\end{equation}
where for an $r\in\BR_{>0}$ we use the notation
\begin{eqnarray*}
L^\sep\langle\tfrac{z}{\zeta^r}\rangle & := & \Bigl\{\,\sum_{i=0}^\infty b_iz^i\colon \; b_i\in L^\sep,\,|b_i|\,|\zeta|^{ri}\to0\;(i\to+\infty)\,\Bigr\}\,.
\end{eqnarray*}
These are subrings of $L^\sep\dbl z\dbr$ and the endomorphism $\hat\sigma\colon\sum_i b_iz^i\mapsto\sum_i b_i^{q_v}z^i$ of $L^\sep\dbl z\dbr$ restricts to a homomorphism $\hat\sigma\colon L^\sep\langle\tfrac{z}{\zeta^r}\rangle\to L^\sep\langle\tfrac{z}{\zeta^{rq_v}}\rangle$. Note that the $\tau$-equivariance of $h$ means $h\otimes\id_{L^\sep\langle\frac{z}{\zeta^{q_v}}\rangle}=\tau_{\hat M}\circ\hat\sigma^*h$. Now the period isomorphism is defined as 
\begin{eqnarray}\label{EqTatIso3}
h_{v,\dR}:=(\tau_{\hat M}^{-1}\circ h)\otimes\id_{\BC_v\dpl z-\zeta\dpr}\colon\;\Koh^1_v\bigl(\ulHM,\BC_v\dpl z-\zeta\dpr\bigr) & \isoto & \hat\sigma^*\hat M\otimes_{R\dbl z\dbr}\BC_v\dpl z-\zeta\dpr\\
& = & \Koh^1_\dR\bigl(\ulHM,\BC_v\dpl z-\zeta\dpr\bigr)\,.\nonumber
\end{eqnarray}
\end{Remark}

\section{Local Shtukas with Complex Multiplication}\label{Amotivescm}
\setcounter{equation}{0}

\begin{Definition}\label{DefCME}
Let $\ulHM$ be a local $\hat\sigma$-shtuka over $R$ and assume that there is  a commutative, semi-simple $Q_v$-algebra $E_v\subset\QEnd_R(\ulHM):=\End_R(\ulHM)\otimes_{A_v}Q_v$ with $\dim_{Q_v}E_v=\rk\ulHM$. Then we say that $\ulHM$ has \emph{complex multiplication} (\emph{by $E_v$}).
\end{Definition}

Here again semi-simple means that $E_v$ is a direct product $E_v = E_{v,1}\times \cdots \times E_{v,s}$ of finite field extensions of $Q_v$. We do \emph{not} assume that $E_v$ is itself a field and in Section~\ref{Amotivescm} we do \emph{not} assume that the $E_{v,i}$ are separable over $Q_v$. We let $\CO_{E_v}$ be the integral closure of $A_v$ in $E_v$. It is a product $\CO_{E_v} = \CO_{E_{v,1}}\times \cdots\times \CO_{E_{v,s}}$ of complete discrete valuation rings where $\CO_{E_{v,i}}$ is the integral closure of $A_v$ in the field $E_{v,i}$. For every $i$ we write $\CO_{E_{v,i}}=\BF_{\tilde v_i}\dbl y_i\dbr$ and set $f_i:=[\BF_{\tilde v_i}:\BF_v]$ and $e_i:=\ord_{y_i}(z)$. Then $f_i\,e_i=[E_{v,i}:Q_v]$ and $e_i$ is divisible by the inseparability degree of $E_{v,i}$ over $Q_v$. Also we write $\tilde q_i:=\#\BF_{\tilde v_i}=q_v^{f_i}$. 

\begin{Proposition}\label{PropCMOE}
If $\ulHM$ has complex multiplication by $E_v$, then there is a local shtuka $\ulHM{}'$ over $R$ quasi-isogenous to $\ulHM$ with $\CO_{E_v}\subset\End_R(\ulHM{}')$.
\end{Proposition}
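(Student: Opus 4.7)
The strategy is to enlarge $\hat M$ inside $\hat M[\tfrac{1}{z}]$ to a free $R\dbl z\dbr$-lattice stable under $\CO_{E_v}$. Pick $A_v$-module generators $b_1,\dots,b_n\in\CO_{E_v}\subset\QEnd_R(\ulHM)$ and choose $N\ge 0$ with $z^Nb_i\in\End_R(\ulHM)$ for all $i$. Define
\[
\hat M' \;:=\; \CO_{E_v}\cdot\hat M \;=\; \TS\sum_{i=1}^n b_i(\hat M) \;\subset\; z^{-N}\hat M \;\subset\; \hat M[\tfrac{1}{z}].
\]
By construction $\hat M\subset\hat M'$, and $\hat M'$ is a finitely generated $R\dbl z\dbr$-submodule of $\hat M[\tfrac{1}{z}]$ stable under $\CO_{E_v}$ via the multiplication rules $b\cdot b_i=\sum_j\lambda_{ij}b_j$ in $\CO_{E_v}$ with $\lambda_{ij}\in A_v$. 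Since each $b\in\CO_{E_v}$ commutes with $\tau_{\hat M}$ in $\QEnd_R(\ulHM)$, a short check gives $\tau_{\hat M}(\hat\sigma^*\hat M')\subset\hat M'[\tfrac{1}{z-\zeta}]$ and the analogous inclusion for $\tau_{\hat M}^{-1}$, so $\tau_{\hat M}$ restricts to an isomorphism $\tau_{\hat M'}\colon\hat\sigma^*\hat M'[\tfrac{1}{z-\zeta}]\isoto\hat M'[\tfrac{1}{z-\zeta}]$.

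The only non-formal point is that $\hat M'$ may fail to be \emph{free} over $R\dbl z\dbr$, as Definition~\ref{LocalshtDef} requires. Because $R\dbl z\dbr$ is a two-dimensional complete regular local ring, every finitely generated reflexive module over it has depth two and is therefore free by Auslander--Buchsbaum. I resolve freeness by passing to the reflexive hull
\[
\hat M'' \;:=\; (\hat M')^{\vee\vee} \;=\; \bigcap_{\Fp\text{ of height }1}(\hat M')_\Fp \;\subset\; \hat M\otimes_{R\dbl z\dbr}\operatorname{Frac}R\dbl z\dbr.
\]
For every height-one prime $\Fp\not\ni z$ one has $(\hat M')_\Fp=\hat M_\Fp$, while $\hat M[\tfrac{1}{z}]$ is already reflexive over the Dedekind domain $R\dbl z\dbr[\tfrac{1}{z}]$; intersecting over those primes forces $\hat M''\subset\hat M[\tfrac{1}{z}]$ and $\hat M''[\tfrac{1}{z}]=\hat M[\tfrac{1}{z}]$. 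Hence $\hat M''$ is a free $R\dbl z\dbr$-module of rank $r=\rk\ulHM$.

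The $\CO_{E_v}$-action on each $(\hat M')_\Fp$ passes to their intersection, so $\CO_{E_v}\subset\End_{R\dbl z\dbr}(\hat M'')$. Because $\hat\sigma$ is flat on $R\dbl z\dbr$ (in fact finite flat, $\kappa$ being perfect), flat base change yields $\hat\sigma^*\hat M''=(\hat\sigma^*\hat M')^{\vee\vee}$, so $\tau_{\hat M'}$ extends uniquely to an isomorphism $\tau_{\hat M''}\colon\hat\sigma^*\hat M''[\tfrac{1}{z-\zeta}]\isoto\hat M''[\tfrac{1}{z-\zeta}]$ (using reflexivity over the Dedekind domain $R\dbl z\dbr[\tfrac{1}{z-\zeta}]$). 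Therefore $\ulHM'=(\hat M'',\tau_{\hat M''})$ is a local $\hat\sigma$-shtuka over $R$ with $\CO_{E_v}\subset\End_R(\ulHM')$, and the inclusion $\hat M\hookrightarrow\hat M''$ is a quasi-isogeny $\ulHM\to\ulHM'$ because it becomes the identity after inverting $z$. The substantive step is the freeness upgrade; all remaining verifications are routine.
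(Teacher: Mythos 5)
Your argument is correct, but it proceeds along a genuinely different route from the paper's. The paper works on the Galois side: it forms the $A_v$-lattice $T'=\CO_{E_v}\cdot\Koh^1_v(\ulHM,A_v)$ inside the rational $v$-adic realization, invokes \cite[Proposition~4.19]{HartlKim} to realize $T'$ as $\Koh^1_v(\ulHM{}',A_v)$ for a local shtuka $\ulHM{}'$ quasi-isogenous to $\ulHM$, and then uses full faithfulness of the $v$-adic realization functor \cite[Theorem~4.17]{HartlKim} to descend the $\CO_{E_v}$-action from the Tate module to $\End_R(\ulHM{}')$. You instead work directly at the level of $R\dbl z\dbr$-modules: enlarge $\hat M$ to $\hat M'=\CO_{E_v}\cdot\hat M$, and restore freeness by passing to the reflexive hull $\hat M''=(\hat M')^{\vee\vee}$, using that finitely generated reflexive modules over the two-dimensional regular local ring $R\dbl z\dbr$ are free (the same Serre/Auslander--Buchsbaum input that the paper itself later uses in the proof of Proposition~\ref{PropFree}, citing \cite[\S6, Lemme~6]{Serre58}); the $\tau$- and $\CO_{E_v}$-structures then pass to $\hat M''$ because reflexive hulls commute with the flat base changes $\hat\sigma^*$ and localization at $z-\zeta$. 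Your approach avoids the two external theorems from \cite{HartlKim} at the price of the commutative-algebra bookkeeping around reflexive hulls (the pinch point you correctly flagged is that $\CO_{E_v}\cdot\hat M$ need not be free); the paper's approach is shorter once those inputs are granted and is more uniform with how the paper handles lattices in Tate modules elsewhere.
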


\begin{proof}
The $A_v$-submodule $T':=\CO_{E_v}\cdot\Koh^1_v(\ulHM,A_v)\subset\Koh^1_v(\ulHM,Q_v)$ is $\sG_L$-invariant and contains $\Koh^1_v(\ulHM,A_v)$. Since $\CO_{E_v}\subset\QEnd_R(\ulHM)=\End_R(\ulHM)\otimes_{A_v}Q_v$ there is an element $a\in A_v$ with $a\cdot\CO_{E_v}\subset\End_R(\ulHM)$, and therefore $a\cdot T'\subset\Koh^1_v(\ulHM,A_v)$ is a finitely generated $A_v$-module, that is an $A_v$-lattice. By \cite[Proposition~4.22]{HartlKim} there is a local shtuka $\ulHM{}'$ and a quasi-isogeny $f\colon\ulHM\to\ulHM{}'$ which maps $T'$ isomorphically onto $\Koh^1_v(\ulHM{}',A_v)$. In particular $\CO_{E_v}$ acts as $\sG_L$-equivariant endomorphisms of $\Koh^1_v(\ulHM{}',A_v)$. Since the functor $\ulHM{}'\mapsto\Koh^1_v(\ulHM{}',A_v)$ from local shtukas to $A_v[\sG_L]$-modules is fully faithful by \cite[Theorem~4.20]{HartlKim}, we see that $\CO_{E_v}\subset\End_R(\ulHM{}')$. 
\end{proof}

\begin{Definition}\label{DefCMOE}
 If $\CO_{E_v}\subset\End_R(\ulHM)$ we say that $\ulHM$ has \emph{complex multiplication by $\CO_{E_v}$}. This makes the underlying module $\hat M$ into a module over the ring $\CO_{E_v,R}:=\CO_{E_v}\otimes_{A_v}R\dbl z\dbr=\CO_{E_v}\wh\otimes_{\BF_v}R$. For $a\in \CO_{E_v}$ note that $a\otimes 1\in\CO_{E_v,R}$ acts  on $\hat M$ as the endomorphism $a$ and on $\hat\sigma^*\hat{ M}$ as the endomorphism $\hat\sigma^*a$ and  $\tau_{\hat M}$ is $\CO_{E_v}$-linear because $a\circ \tau_{\hat M} = \tau_{\hat M}\circ  \hat\sigma^* a $.
\end{Definition}

\begin{Point}
Let us assume that $L$ contains $\psi(E_v)$ for every $\psi\in H_{E_v}$. This implies $\psi(\CO_{E_v})\subset R$ for every $\psi\in H_{E_v}$. Under this assumption let us describe the ring $\CO_{E_v,R}=\prod_{i=1}^s\CO_{E_{v,i},R}$. Fix an $i$ and choose and fix an $\BF_v$-homomorphism $\BF_{\tilde v_i}\into \kappa$. Then $H_{\tilde v_i}:=\Hom_{\BF_v\!}(\BF_{\tilde v_i},\kappa)\cong\BZ/f_i\BZ$ under the map that sends $j\in\BZ/f_i\BZ$ to the homomorphism $(\lambda\mapsto\lambda^{q_v^j})\in H_{\tilde v_i}$. Also
\[
\BF_{\tilde v_i}\otimes_{\BF_v} R \;=\; \prod_{H_{\tilde v_i}}R \;=\;  \prod_{j\in\BZ/f_i\BZ}\BF_{\tilde v_i}\otimes_{\BF_v} R\,/\,(\lambda\otimes 1-1\otimes \lambda^{q_v^j}:\lambda\in \BF_{\tilde v_i})
\]
and  $\hat\sigma^*$ transports the $j$-th factor to the $(j+1)$-th factor. Denote by $\Fb_{i,j}\subset\CO_{E_{v,i}}$ the ideal  generated by $(\lambda\otimes 1-1\otimes\lambda^{q_v^j}:\lambda\in \BF_{\tilde v_i})$. Then
\begin{equation}\label{EqOER}
\CO_{E_{v,i},R}\;:=\;\BF_{\tilde v_i}\dbl y_i\dbr \otimes_{A_v}R\dbl z\dbr \;=\; \prod_{j\in\BZ/f_i\BZ} \BF_{\tilde v_i}\dbl y_i\dbr\otimes_{\BF_{ v}\dbl z\dbr}R\dbl z\dbr/ \Fb_{i,j} \;=\; \prod_{H_{\tilde v_i}}R\dbl y_i\dbr .
\end{equation}
\end{Point}

\begin{Definition}\label{DefIOfPsi}
For every $\psi\in H_{E_v}$ we let $i(\psi)$ be such that $\psi$ factors through the quotient $E_v\onto E_{v,i(\psi)}$ and we let $j(\psi)\in\BZ/f_{i(\psi)}\BZ$ be the element such that $\psi(\lambda)=\lambda^{q_v^{j(\psi)}}$ for all $\lambda\in\BF_{\tilde v_{i(\psi)}}$. Then the morphism $\psi\colon\CO_{E_v}\to R$ equals the composition $\CO_{E_v}\into\CO_{E_v,R}\onto\CO_{E_{v,i(\psi)},R}/\bigl(\Fb_{i(\psi),j(\psi)},\,y_{i(\psi)} - \psi(y_{i(\psi)})\bigr)$ and $H_{E_{v,i}}=\{\psi\in H_{E_v}\colon i(\psi)=i\}$.
\end{Definition}

\begin{Lemma}\label{LemFactors}
Let $p^m$ be the inseparability degree of $E_{v,i}$ over $Q_v$. Then in the $j$-th component $R\dbl y_i\dbr$ of \eqref{EqOER} we have
\begin{equation}\label{EqFactors}
z-\zeta\;=\;\epsilon\cdot\prod_{\psi\in H_{E_v}\colon(i,j)(\psi)=(i,j)}\bigl(y_i-\psi(y_i)\bigr)^{p^m}
\end{equation}
for a unit $\epsilon\in R\dbl y_i\dbr$.
\end{Lemma}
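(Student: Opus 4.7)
The plan is to reduce the factorisation to a Weierstrass preparation in the subring $R\dbl y_i^{p^m}\dbr\subset R\dbl y_i\dbr$ and then to split the resulting Weierstrass polynomial as a $p^m$-th power using that we are in characteristic $p$. The structural step underlying this reduction, and the part I expect to be the main obstacle, is to show that $z$, viewed inside the $j$-th component of \eqref{EqOER}, actually lies in the subring $R\dbl y_i^{p^m}\dbr$.

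To carry out that structural step I would first analyse the separable closure $E_{v,i}^{\sep}$ of $Q_v$ inside $E_{v,i}$, so that $E_{v,i}/E_{v,i}^{\sep}$ is purely inseparable of degree $p^m$. Because the residue field $\BF_{\tilde v_i}$ is finite and therefore perfect, no proper purely inseparable residue-field extension is possible, so $E_{v,i}/E_{v,i}^{\sep}$ is totally ramified. A value-group comparison then shows that a uniformizer $y_i$ of $E_{v,i}$ generates this extension: $E_{v,i}^{\sep}(y_i)$ has ramification $p^k$ over $E_{v,i}^{\sep}$ for some $k\le m$, and $v_{E_{v,i}}(y_i)=1$ must lie in $p^{m-k}\BZ$, forcing $k=m$. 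Hence the minimal polynomial of $y_i$ over $E_{v,i}^{\sep}$ has the purely inseparable form $T^{p^m}-\tilde y$ with $\tilde y:=y_i^{p^m}\in E_{v,i}^{\sep}$, and $\tilde y$ is a uniformizer of $E_{v,i}^{\sep}$ (whose ramification over $Q_v$ is $e_i/p^m$). Since $z\in Q_v\subset E_{v,i}^{\sep}$ has $\tilde y$-valuation $e_i/p^m$, this yields an expansion $z=\sum_{k\ge e_i/p^m}\tilde a_k\,y_i^{k p^m}$ with $\tilde a_k\in\BF_{\tilde v_i}$ and $\tilde a_{e_i/p^m}\ne 0$.

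Passing to the $j$-th component of \eqref{EqOER} replaces $\tilde a_k$ by $\tilde a_k^{q_v^j}\in R$, and in the variable $T:=y_i^{p^m}$ the element $z-\zeta=-\zeta+\sum_k\tilde a_k^{q_v^j}T^k$ is distinguished of degree $e_i/p^m$ over $R$. Weierstrass preparation in $R\dbl T\dbr$ gives $z-\zeta=u(T)\cdot Q(T)$ with $u\in R\dbl T\dbr\mal$ and $Q(T)=\prod_{l=1}^{e_i/p^m}(T-\beta_l)$ a distinguished polynomial with roots $\beta_l\in\CO_{\BC_v}$. Letting $\alpha_l\in\CO_{\BC_v}$ be the unique $p^m$-th root of $\beta_l$, the characteristic-$p$ identity $y_i^{p^m}-\alpha_l^{p^m}=(y_i-\alpha_l)^{p^m}$ gives $Q(y_i^{p^m})=\prod_l(y_i-\alpha_l)^{p^m}$. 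It then remains to identify the $\alpha_l$ with embedding values $\psi(y_i)$ for $\psi\in H_{E_{v,i}}$ with $j(\psi)=j$: applying $\psi$ to the expansion of $z$ and using $\psi(z)=z=\zeta$ in $\BC_v$ shows that $\psi(y_i)^{p^m}$ is a root of $Q$, and since $E_{v,i}$ is generated over $Q_v$ by $\BF_{\tilde v_i}$ and $y_i$, each such $\psi$ is determined by $j(\psi)$ together with $\psi(y_i)$, so the $e_i/p^m$ such embeddings give exactly the $e_i/p^m$ distinct roots of $Q$. Setting $\epsilon:=u(y_i^{p^m})\in R\dbl y_i\dbr\mal$ then yields \eqref{EqFactors}.
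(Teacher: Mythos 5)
Your proof is correct and takes a genuinely different route from the one in the paper. The paper works with the (Eisenstein) minimal polynomial $P(z,Y)=\sum_{\mu,\nu}b_{\mu\nu}z^\mu Y^\nu$ of $y'_i:=y_i^{p^m}$ over $\BF_{\tilde v_i}\dpl z\dpr$: after Frobenius-twisting to $P^{(j)}$, it factors $P^{(j)}(\zeta,Y)=\prod(Y-\psi(y'_i))$ over the $\psi$ with $(i,j)(\psi)=(i,j)$, and then pulls out the factor $\zeta-z$ and the unit via the telescoping identity
\[
P^{(j)}(\zeta,y'_i)-P^{(j)}(z,y'_i)\;=\;(\zeta-z)\cdot\sum_{\mu,\nu}b_{\mu\nu}^{q_v^j}\bigl(\zeta^{\mu-1}+\ldots+z^{\mu-1}\bigr)(y'_i)^\nu\,,
\]
recognizing the second factor as a unit because it is congruent to $b_{1,0}^{q_v^j}\ne0$ modulo $(\pi_L,y_i)$. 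You instead expand $z=\sum_k\tilde a_k\,(y'_i)^k$ as a power series over $\BF_{\tilde v_i}$ --- the inverse relation to $P$ --- pass to the $j$-th component, and apply Weierstrass preparation to $z-\zeta$ in $R\dbl y'_i\dbr$, identifying the roots of the resulting distinguished polynomial with the $\psi(y'_i)$. The structural inputs are the same in both treatments (that $y'_i$ is a uniformizer of the separable closure $E_{v,i}^{\sep}=\BF_{\tilde v_i}\dpl y'_i\dpr$, that this is totally ramified of degree $e_i/p^m$ over $\BF_{\tilde v_i}\dpl z\dpr$, the characteristic-$p$ device $y'_i-\psi(y'_i)=(y_i-\psi(y_i))^{p^m}$, and the identification/count of the $e_i/p^m$ distinct roots $\psi(y'_i)$), but you re-derive the separable-closure and uniformizer facts by a value-group/ramification argument where the paper simply invokes its appendix Lemma~\ref{LemmaInsepExt}, and you invoke Weierstrass preparation in place of the paper's entirely elementary telescoping sum. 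Both arrive at the same unit $\epsilon$; your presentation makes the dependence of $z$ on $y'_i$ a visible power series, while the paper's is slightly shorter and more self-contained given that it already has Lemma~\ref{LemmaInsepExt} in the appendix.
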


\begin{proof}
Set $y'_i:=y_i^{p^m}$ and let $P=P(z,Y)=\sum_{\mu,\nu}b_{\mu\nu}z^\mu Y^\nu\in \BF_{\tilde v_i}\dbl z\dbr[Y]$ with $b_{\mu\nu}\in\BF_{\tilde v_i}$ be the minimal polynomial of $y'_i$ over $\BF_{\tilde v_i}\dpl z\dpr$. It is an Eisenstein polynomial of degree $e_i/p^m$, because $\BF_{\tilde v_i}\dpl y'_i\dpr$ is purely ramified and separable over $\BF_{\tilde v_i}\dpl z\dpr$ by Lemma~\ref{LemmaInsepExt} in the appendix. In particular $b_{0,\nu}=0$ for $0\le\nu<e_i/p^m$, and $b_{1,0}\ne0$. Consider the polynomials $P^{(j)}(z,Y):=\sum_{\mu,\nu}b_{\mu\nu}^{q_v^j}z^\mu Y^\nu\in \BF_{\tilde v_i}\dbl z\dbr[Y]\subset R\dbl z\dbr[Y]$ and $P^{(j)}(\zeta,Y)\in R[Y]$. If $\psi\in H_{E_v}$ satisfies $(i,j)(\psi)=(i,j)$ then $P^{(j)}\bigl(\zeta,\psi(y'_i)\bigr)=\psi\bigl(P(z,y'_i)\bigr)=\psi(0)=0$. These zeroes $\psi(y'_i)$ of $P^{(j)}(\zeta,Y)$ in $L$ are pairwise different, because if $\psi(y'_i)=\wt\psi(y'_i)$ then $(i,j)(\psi)=(i,j)(\wt\psi)$ implies that $\psi$ and $\wt\psi$ coincide on $E_{v,i}$ and hence on $E_v$. It follows that $P^{(j)}(\zeta,Y)=\prod_{\psi\colon(i,j)(\psi)=(i,j)}(Y-\psi(y'_i))$ in $L[Y]$, whence already in $R[Y]$. In the $j$-th component $R\dbl y_i\dbr$ of \eqref{EqOER} we have $0=\sum_{\mu,\nu}b_{\mu\nu}z^\mu (y'_i)^\nu\otimes 1=\sum_{\mu,\nu}(y'_i)^\nu\otimes b_{\mu\nu}^{q_v^j}z^\mu=P^{(j)}(z,y'_i)$, and we compute 
\begin{eqnarray*}
\prod_{\psi\colon(i,j)(\psi)=(i,j)}\bigl(y_i-\psi(y_i)\bigr)^{p^m} & = &  P^{(j)}(\zeta,y'_i) \\[-3mm]
& = & P^{(j)}(\zeta,y'_i)-P^{(j)}(z,y'_i) \\[1mm]
& = & {\TS\sum\limits_{\mu,\nu}}b_{\mu\nu}^{q_v^j}(\zeta^\mu-z^\mu)(y'_i)^\nu \\
& = & (\zeta-z)\cdot{\TS\sum\limits_{\mu,\nu}}b_{\mu\nu}^{q_v^j}(\zeta^{\mu-1}+\zeta^{\mu-2}z+\ldots+z^{\mu-1})(y'_i)^\nu\,.
\end{eqnarray*}
The factor $\sum_{\mu,\nu}b_{\mu\nu}^{q_v^j}(\zeta^{\mu-1}+\zeta^{\mu-2}z+\ldots+z^{\mu-1})(y'_i)^\nu$ is congruent to $b_{1,0}^{q_v^j}\ne0$ modulo the maximal ideal $(\pi_L,y_i)\subset R\dbl y_i\dbr$ and therefore a unit in $R\dbl y_i\dbr$. This finishes the proof. Note that since 
\[
P^{(j)}(\zeta,y'_i)-P^{(j)}(z,y'_i)\;=\;{\TS\sum\limits_{n\ge1}}\tfrac{1}{n!}\,\tfrac{\partial^n P^{(j)}}{\partial z^n}(z,y'_i)\cdot(\zeta-z)^n\,,
\]
the proof could also be phrased by saying that $\tfrac{\partial P^{(j)}}{\partial z}(z,y'_i)=\sum_{\mu,\nu}\mu b_{\mu\nu}^{q_v^j}z^{\mu-1}(y'_i)^\nu$ lies in $\CO_{E'_{v,i}}\mal$. In fact this partial derivative is congruent to $b_{1,0}^{q_v^j}\ne0$ modulo $y'_i\cdot\CO_{E'_{v,i}}$.
\end{proof}

Let us draw a direct corollary from the proof of this lemma. To formulate it, recall that if $E_{v,i}$ is separable over $Q_v$, the different $\FD_{E_{v,i}/Q_v}$ of $E_{v,i}$ over $Q_v$ is defined as the ideal in $\CO_{E_{v,i}}$ which annihilates the module $\Omega^1_{\CO_{E_{v,i}}/A_v}$ of relative differentials.

\begin{Corollary}\label{CorDifferent}
If $E_{v,i}$ is separable over $Q_v$ then $\FD_{\phi(E_{v,i})/Q_v}=\Bigl(\tfrac{z-\zeta}{y_i-\phi(y_i)}\Big|_{y_i=\phi(y_i)}\Bigr)$ in $\CO_{\phi(E_{v,i})}$ for every $\phi\in H_{E_{v,i}}$.
\end{Corollary}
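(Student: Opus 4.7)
The idea is to combine the explicit factorization of $z-\zeta$ in Lemma~\ref{LemFactors} with the classical formula for the different of a monogenic extension. Since $E_{v,i}/Q_v$ is separable, the inseparability degree is $p^m=1$, so $y'_i=y_i$ and in the $j$-th component $R\dbl y_i\dbr$ of \eqref{EqOER} Lemma~\ref{LemFactors} reads
\[
z-\zeta \;=\; \epsilon\cdot\prod_{\psi\in H_{E_v}:\,(i,j)(\psi)=(i,j)}(y_i-\psi(y_i)),\qquad \epsilon\in R\dbl y_i\dbr\mal.
\]
Specializing to $j=j(\phi)$, dividing by $y_i-\phi(y_i)$, and evaluating at $y_i=\phi(y_i)$ I obtain in $R$ the identity
\[
\tfrac{z-\zeta}{y_i-\phi(y_i)}\big|_{y_i=\phi(y_i)} \;=\; \epsilon(\phi(y_i))\cdot\prod_{\psi\ne\phi}(\phi(y_i)-\psi(y_i)),
\]
where the remaining product runs over $\psi\in H_{E_v}$ with $(i,j)(\psi)=(i,j(\phi))$ and $\psi\ne\phi$. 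Since $\epsilon(\phi(y_i))\in R\mal\subset\CO_{\phi(E_{v,i})}\mal$, the ideal generated by the left-hand side in $\CO_{\phi(E_{v,i})}$ coincides with the ideal generated by the product on the right.

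Next I identify that product with the value of the derivative of the minimal polynomial of $\phi(y_i)$. The proof of Lemma~\ref{LemFactors} shows (with $p^m=1$) that the Frobenius-twisted reduction $P^{(j(\phi))}(\zeta,Y)\in R[Y]$ of the Eisenstein minimal polynomial $P(z,Y)\in\BF_{\tilde v_i}\dbl z\dbr[Y]$ of $y_i$ factors in $L[Y]$ as $\prod_{\psi:\,(i,j)(\psi)=(i,j(\phi))}(Y-\psi(y_i))$ and is the minimal polynomial of $\phi(y_i)$ over $\gamma(Q_v)\subset L$. Differentiating in $Y$ and evaluating at $Y=\phi(y_i)$ reproduces exactly the product $\prod_{\psi\ne\phi}(\phi(y_i)-\psi(y_i))$ from above. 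Applying the classical monogenic-different formula to $\CO_{E_{v,i}}=\BF_{\tilde v_i}\dbl z\dbr[y_i]=\BF_{\tilde v_i}\dbl z\dbr[Y]/(P(z,Y))$ gives $\FD_{\CO_{E_{v,i}}/\BF_{\tilde v_i}\dbl z\dbr}=\bigl(\tfrac{\partial P}{\partial Y}(z,y_i)\bigr)$, and the intermediate extension $\BF_{\tilde v_i}\dbl z\dbr/A_v$ being unramified has trivial different, so by multiplicativity of the different in towers $\FD_{\CO_{E_{v,i}}/A_v}=\bigl(\tfrac{\partial P}{\partial Y}(z,y_i)\bigr)$. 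Transporting along the isomorphism of $A_v$-algebras $\phi\colon\CO_{E_{v,i}}\isoto\CO_{\phi(E_{v,i})}$, which sends $z$ to $\zeta$ and twists the $\BF_{\tilde v_i}$-coefficients by $\lambda\mapsto\lambda^{q_v^{j(\phi)}}$, produces $\FD_{\phi(E_{v,i})/Q_v}=\bigl(\tfrac{\partial P^{(j(\phi))}}{\partial Y}(\zeta,\phi(y_i))\bigr)$. Combining this with the two preceding identifications yields the claim.

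The only delicate point I expect is the bookkeeping with the two uses of ``$z$''---as the formal variable/uniformizer of $A_v\subset\CO_{E_{v,i}}$ and as its image $\zeta=\gamma(z)\in R\subset L$---and the interpretation of the specialization $y_i\mapsto\phi(y_i)$ as the projection $\CO_{E_{v,i},R}\onto R$ onto the factor of \eqref{EqOER} indexed by $\phi$, under which the $Q_v$-linear embedding $\phi$ acts as $z\mapsto\zeta$, $\lambda\mapsto\lambda^{q_v^{j(\phi)}}$ on $\BF_{\tilde v_i}$, and $y_i\mapsto\phi(y_i)$. Once these identifications are made, the corollary is an immediate consequence of Lemma~\ref{LemFactors} and the classical description of the different of a monogenic extension.
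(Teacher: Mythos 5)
Your proposal is correct, and it organizes the argument differently from the paper's, even though both ultimately rest on the same Eisenstein polynomial $P(z,Y)$. You route through Lemma~\ref{LemFactors}: the identity $z-\zeta=\epsilon\cdot\prod_{\psi}(y_i-\psi(y_i))$ in the component $R\dbl y_i\dbr$, specialized at $y_i=\phi(y_i)$, gives $\frac{z-\zeta}{y_i-\phi(y_i)}\big|_{y_i=\phi(y_i)}$ up to a unit as $\prod_{\psi\ne\phi}(\phi(y_i)-\psi(y_i))=\frac{\partial P^{(j(\phi))}}{\partial Y}(\zeta,\phi(y_i))$, and then the monogenic formula for the different plus transport along $\phi\colon\CO_{E_{v,i}}\isoto\CO_{\phi(E_{v,i})}$ finishes the job. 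The paper instead computes $\FD_{E_{v,i}/Q_v}$ via $\Omega^1_{\CO_{E_{v,i}}/\BF_{\tilde v_i}\dbl z\dbr}$, rewrites $\frac{\partial P}{\partial Y}(z,y_i)$ as $-\frac{\partial P}{\partial z}(z,y_i)\cdot\frac{dz}{dy_i}$ (with $\frac{\partial P}{\partial z}$ a unit, so $\FD_{E_{v,i}/Q_v}=(\frac{dz}{dy_i})$), and then applies Lemma~\ref{LemDerivative} to identify $\phi\bigl(\frac{dz}{dy_i}\bigr)$ with $\frac{z-\zeta}{y_i-\phi(y_i)}\big|_{y_i=\phi(y_i)}$, giving the conceptually cleaner interpretation of the generator as a derivative of the local parameter. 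Your route trades Lemma~\ref{LemDerivative} for the already-available factorization of Lemma~\ref{LemFactors}; both are valid. Two cosmetic slips worth fixing: the inclusion of unit groups is $\CO_{\phi(E_{v,i})}\mal\subset R\mal$, not the reverse (what you actually need, and what follows from the proof of Lemma~\ref{LemFactors}, is that $\epsilon(\phi(y_i))=-\phi\bigl(\frac{\partial P}{\partial z}(z,y_i)\bigr)^{-1}\in\CO_{\phi(E_{v,i})}\mal$, or alternatively note that two elements of $\CO_{\phi(E_{v,i})}$ that are $R\mal$-associates already generate the same ideal in $\CO_{\phi(E_{v,i})}$ since $R$ is a DVR extension); and $P^{(j(\phi))}(\zeta,Y)$ is the minimal polynomial of $\phi(y_i)$ over the maximal unramified subextension $\phi(\BF_{\tilde v_i})\dpl\zeta\dpr$, not over $\gamma(Q_v)$ itself, though that plays no role in your actual deduction.
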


\begin{proof}
By \cite[Chapter~III, \S\,4, Proposition~8]{SerreLF} the different is multiplicative, that is $\FD_{E_{v,i}/Q_v}=\FD_{E_{v,i}/\BF_{\tilde v_i}\dpl z\dpr}\cdot\FD_{\BF_{\tilde v_i}\dpl z\dpr/Q_v}$. Moreover, $\FD_{\BF_{\tilde v_i}\dpl z\dpr/Q_v}=1$ because $\BF_{\tilde v_i}\dbl z\dbr$ is unramified over $A_v$. As in the proof of the preceding lemma let $P(z,Y)$ be the minimal polynomial of $y'_i$ over $\BF_{\tilde v_i}\dpl z\dpr$ and note that $y'_i=y_i$ under our separability assumption. Then $\tfrac{\partial P}{\partial z}(z,y_i)\in\CO_{E_{v,i}}\mal$ and
\[
\Omega^1_{\CO_{E_{v,i}}/\BF_{\tilde v_i}\dbl z\dbr} \;=\; \CO_{E_{v,i}}\langle dz,dy_i \rangle\big/\bigl(dz,\,\tfrac{\partial P}{\partial z}(z,y_i)dz+\tfrac{\partial P}{\partial Y}(z,y_i)dy_i\bigr)\;=\; \CO_{E_{v,i}}\cdot dy_i/(\tfrac{\partial P}{\partial Y}(z,y_i)dy_i)\,.
\]
We write $z=f(y_i)\in\BF_{\tilde v_i}\dbl y_i\dbr$. Then $0=\tfrac{d}{dy_i}P(f(y_i),y_i)=\tfrac{\partial P}{\partial z}(f(y_i),y_i)\tfrac{df(y_i)}{dy_i}+\tfrac{\partial P}{\partial Y}(f(y_i),y_i)$ and $\FD_{E_{v,i}/Q_v}=\bigl(\tfrac{\partial P}{\partial Y}(z,y_i)\bigr)=\bigl(\tfrac{df(y_i)}{dy_i}\bigr)$. Now Lemma~\ref{LemDerivative} in the appendix implies that $\FD_{\phi(E_{v,i})/Q_v}=\phi\bigl(\tfrac{df(y_i)}{dy_i}\bigr)=\Bigl(\tfrac{z-\zeta}{y_i-\phi(y_i)}\Bigr)\Big|_{y_i=\phi(y_i)}$.
\end{proof}

\bigskip

Now we explore the consequences of these decompositions for local shtukas with complex multiplication.

\begin{Proposition}\label{PropFree}
Let $\ulHM=(\hat M,\tau_{\hat M})$ have complex multiplication by $\CO_{E_v}$. Then the $\CO_{E_v,R}$-module $\hat M$ is free of rank $1$. In particular, $\ulHM_i:=\ulHM\otimes_{\CO_{E_v}}\CO_{E_{v,i}}$ is a local $\hat\sigma$-shtuka over $R$ with $\rk\ulHM_i = [E_{v,i}: Q_v]$ and $\ulHM=\bigoplus_{i=1}^s\ulHM_i$.
\end{Proposition}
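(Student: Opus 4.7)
Plan: I would use the ring decomposition $\CO_{E_v,R}=\prod_{i,j}R\dbl y_i\dbr$ of \eqref{EqOER}. Let $e_{i,j}$ denote the idempotent singling out the $(i,j)$-th factor and set $\hat M_{i,j}:=e_{i,j}\hat M$, so that $\hat M=\bigoplus_{i,j}\hat M_{i,j}$ decomposes into finitely generated $R\dbl y_i\dbr$-modules. My goal is to show each $\hat M_{i,j}$ is free of rank $1$ over $R\dbl y_i\dbr$.

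The first step is to show each $\hat M_{i,j}$ is free over $R\dbl y_i\dbr$ of some rank $r_{i,j}$. As a direct summand of the free (in particular, maximal Cohen--Macaulay) $R\dbl z\dbr$-module $\hat M$, the module $\hat M_{i,j}$ is itself free over the local ring $R\dbl z\dbr$ and thus has depth $2$. Since $R\dbl z\dbr\subset R\dbl y_i\dbr$ is a finite extension of $2$-dimensional regular local rings with $\sqrt{(\pi_L,z)R\dbl y_i\dbr}=(\pi_L,y_i)$ (using the relation $z\equiv (\text{unit})\cdot y_i^{e_i}\pmod{\pi_L}$ from Lemma~\ref{LemFactors}), depth is preserved along the extension, so $\hat M_{i,j}$ has depth $2$ over $R\dbl y_i\dbr$ as well. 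Applying Auslander--Buchsbaum to the regular local ring $R\dbl y_i\dbr$ yields $\mathrm{pd}_{R\dbl y_i\dbr}\hat M_{i,j}=0$, hence $\hat M_{i,j}$ is free.

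Next I show that $r_{i,j}$ is independent of $j$ by exploiting the $\hat\sigma$-structure. The idempotent $e_{i,j}$ lies in the image of the subring $\BF_{\tilde v_i}\otimes_{\BF_v}\kappa\subset\CO_{E_v,R}$, on which $\hat\sigma=\id\otimes\Frob_{q_v}$ is invertible because $\kappa$ is a finite field; a direct check on the defining ideals $\Fb_{i,j}$ then shows $\hat\sigma(e_{i,j})=e_{i,j+1}$. Writing $e_{i,j}=\sum_k\lambda_k\otimes\bar r_k$ with $\lambda_k\in\BF_{\tilde v_i}$ and $\bar r_k\in\kappa$, the tensor-product identity $m\otimes\bar r_k=\hat\sigma^{-1}(\bar r_k)\cdot m\otimes 1$ in $\hat\sigma^*\hat M$ shows that $e_{i,j}$ acts on $m\otimes 1$ as multiplication by $\hat\sigma^{-1}(e_{i,j})=e_{i,j-1}$ on $\hat M$, giving $e_{i,j}\cdot\hat\sigma^*\hat M=\hat\sigma^*\hat M_{i,j-1}$. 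Since $\tau_{\hat M}$ is $\CO_{E_v,R}[\tfrac{1}{z-\zeta}]$-linear and an isomorphism, it restricts to an isomorphism $\hat\sigma^*\hat M_{i,j-1}[\tfrac{1}{z-\zeta}]\isoto\hat M_{i,j}[\tfrac{1}{z-\zeta}]$ of free $R\dbl y_i\dbr[\tfrac{1}{z-\zeta}]$-modules, forcing $r_{i,j}=r_{i,j-1}=:r_i$.

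To conclude, I combine two rank constraints. Counting $R\dbl z\dbr$-ranks gives $\sum_i f_ie_ir_i=\rk_{R\dbl z\dbr}\hat M=\rk\ulHM=\dim_{Q_v}E_v=\sum_i f_ie_i$, so $\sum_i f_ie_i(r_i-1)=0$. On the other hand, the inclusion $\CO_{E_v}\hookrightarrow\End_R(\ulHM)\subset\End_{R\dbl z\dbr}(\hat M)$ is faithful, so each non-zero idempotent $e_i\in\CO_{E_v}$ acts non-trivially on $\hat M$, giving $\hat M_i=\bigoplus_j\hat M_{i,j}\ne 0$ and hence $r_i\ge 1$. The two constraints then force $r_i=1$ for all $i$, so $\hat M$ is free of rank $1$ over $\CO_{E_v,R}$. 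The final assertions about $\ulHM_i=\ulHM\otimes_{\CO_{E_v}}\CO_{E_{v,i}}$ are immediate from the idempotent decomposition, since the underlying module $e_i\hat M$ is then free of rank $1$ over $\CO_{E_{v,i},R}$ and hence of rank $f_ie_i=[E_{v,i}:Q_v]$ over $R\dbl z\dbr$. The main obstacle is the depth-and-Auslander--Buchsbaum argument of Step~1, together with the careful bookkeeping of the $\hat\sigma^{-1}$-twist needed to identify how the idempotents of $\CO_{E_v,R}$ act on $\hat\sigma^*\hat M$.
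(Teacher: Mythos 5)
Your argument is correct and reaches the same conclusion, but the key freeness step uses a genuinely different piece of commutative algebra than the paper does. The paper invokes Serre's reflexivity criterion (\cite[\S6, Lemme~6]{Serre58}): a finitely generated module over a two-dimensional regular local ring is free iff it is reflexive, and it checks $M\isoto M\dual{}\dual$ by base-changing to the two principal ideal domains $L\dbl z\dbr$ and $R\dbl z\dbr[\tfrac{1}{z}]$, then recovering $M$ inside $M\otimes L\dpl z\dpr$ from the intersection $R\dbl z\dbr=L\dbl z\dbr\cap R\dbl z\dbr[\tfrac{1}{z}]$. You instead note that $\hat M_{i,j}$ is a direct summand of the free $R\dbl z\dbr$-module $\hat M$ (hence free over $R\dbl z\dbr$, hence of depth $2$), observe that the finite local extension $R\dbl z\dbr\to R\dbl y_i\dbr$ preserves depth, and conclude by Auslander--Buchsbaum over the regular local ring $R\dbl y_i\dbr$. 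Both arguments are correct; yours is perhaps slightly more direct and avoids analyzing the bidual. Your handling of the $\hat\sigma$-twist of the idempotents (i.e.\ that $e_{i,j}\hat\sigma^*\hat M=\hat\sigma^*\hat M_{i,j-1}$) and the ensuing rank-count are the same as the paper's.

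One point you should flag: your proof begins directly from the decomposition $\CO_{E_v,R}=\prod_{i,j}R\dbl y_i\dbr$, which holds only under the hypothesis $\psi(E_v)\subset L$ for all $\psi\in H_{E_v}$; the Proposition itself makes no such assumption. The paper first reduces to that case by faithfully flat descent (replacing $R$ by a finite extension of discrete valuation rings), citing \cite[IV$_2$, Proposition 2.5.2]{EGA}, since freeness of $\hat M$ can be tested after such a base change. You need a sentence to that effect before invoking \eqref{EqOER}; as written, your proof silently assumes the stronger hypothesis.
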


\begin{proof}
By faithfully flat descent \cite[IV$_2$, Proposition 2.5.2]{EGA}, we may replace $R$ by a finite extension of discrete valuation rings. Therefore it suffices to prove the proposition in the case where $R$ contains $\psi(\CO_{E_v})$ for all $\psi\in H_{E_v}$. In this case $\CO_{E_v,R}$ is a product of two dimensional regular local rings $R\dbl y_i\dbr$ by \eqref{EqOER}. By \cite[\S6, Lemme~6]{Serre58} a finitely generated module $M$ over such a ring is free if and only if it is reflexive, that is $M$ is isomorphic to its bidual $M\dual{}\dual$, where $M\dual=\Hom_{R\dbl y_i\dbr}(M,R\dbl y_i\dbr)$. In particular $M\dual{}\dual$, which is isomorphic to $(M\dual{}\dual)\dual{}\dual$ is free. We apply this to $M:=\hat M\otimes_{\CO_{E_v,R}}R\dbl y_i\dbr$ and consider the base changes $M\otimes_{R\dbl y_i\dbr}L\dbl y_i\dbr=M\otimes_{R\dbl z\dbr}L\dbl z\dbr$ and $M\otimes_{R\dbl y_i\dbr}R\dbl y_i\dbr[\tfrac{1}{y_i}]=M\otimes_{R\dbl z\dbr}R\dbl z\dbr[\tfrac{1}{z}]$. Like $L\dbl y_i\dbr$ also $R\dbl y_i\dbr[\tfrac{1}{y_i}]$ is a principal ideal domain, because it is a factorial ring of dimension $1$. Using \cite[Proposition~2.10]{Eisenbud} and that both base changes of $M$ are torsion free, whence free, we see that the canonical morphism $M\to M\dual{}\dual$ is an isomorphism after both base changes. Since $R\dbl z\dbr=L\dbl z\dbr\cap R\dbl z\dbr[\tfrac{1}{z}]\subset L\dpl z\dpr$ and $M$ and $M\dual{}\dual$ are free $R\dbl z\dbr$-modules, $M$ equals the intersection $(M\otimes_{R\dbl z\dbr}L\dbl z\dbr)\cap (M\otimes_{R\dbl z\dbr}R\dbl z\dbr[\tfrac{1}{z}])$ inside $M\otimes_{R\dbl z\dbr}L\dpl z\dpr$, and likewise for $M\dual{}\dual$. This shows that $M\to M\dual{}\dual$ is an isomorphism and $M$ is free over $R\dbl y_i\dbr$.

It remains to compute the rank. Let $r_{i,j}:=\rk_{R\dbl y_i\dbr}(\hat M\otimes_{\CO_{E_v,R}}\CO_{E_{v,i},R}/\Fb_{i,j})$ for all $i=1,\ldots,s$ and all $j\in\BZ/f_i\BZ$. Then $\sum_{i,j}r_{i,j}\cdot e_i=\rk\ulHM$. We first prove that for a fixed $i$ all $r_{i,j}$ are equal. Since $(\hat\sigma^*\hat M)\otimes_{\CO_{E_v,R}}\CO_{E_{v,i},R}/\Fb_{i,j}=\hat\sigma^*(\hat M\otimes_{\CO_{E_v,R}}\CO_{E_{v,i},R}/\Fb_{i,j-1})\cong R\dbl y_i\dbr^{r_{i,j-1}}$, we can write the isomorphism $\tau_{\hat M} :  \hat\sigma^* \hat M[\frac{1}{z-\zeta}] \isoto \hat M[\frac{1}{z-\zeta}]$ in the form
\[
\prod_{j}R\dbl y_i\dbr[\tfrac{1}{z-\zeta}]^{r_{i,j-1}} \;\isoto\; \prod_{j}R\dbl y_i\dbr[\tfrac{1}{z-\zeta}]^{r_{i,j}},
\]
which gives us $r_{i,j-1}=r_{i,j}=:r_i$ for all $j$, and hence $\sum_ir_i\,f_i\,e_i=\rk\ulHM=\dim_{Q_v}E_v=\sum_i\dim_{Q_v}E_{v,i}=\sum_if_i\,e_i$. Thus if we prove that $r_i\ne0$ then all $r_i$ must be $1$ and so $\hat M$ is a free $\CO_{E_v,R}$-module of rank $1$ and $\rk\ulHM_i=f_i\,e_i=[E_{v,i}: Q_v]$. Now $r_i=0$ means that $\hat M\otimes_{\CO_{E_v}}\CO_{E_{v,i}}=(0)$, and hence $E_{v,i}$ acts as zero on $\ulHM$ in contradiction to $E_v\subset\QEnd_R(\ulHM)$. This finishes the proof.
\end{proof}

\begin{Proposition}\label{PropCMTateMod}
If $\ulHM$ has complex multiplication by a commutative semi-simple $Q_v$-algebra $E_v$ then $\Koh^1_v(\ulHM,Q_v)$ is a free $E_v$-module of rank $1$ and $\Koh^1_{\dR}(\ulHM, L\dbl z-\zeta\dbr)$ is a free $E_v\otimes_{Q_v} L\dbl z-\zeta\dbr$-module of rank one, where the homomorphism $Q_v=\BF_v\dpl z\dpr\to L\dbl z-\zeta\dbr$ is given by $z\mapsto z=\zeta+(z-\zeta)$. If we assume that $L\supset\psi(E_v)$ for all $\psi\in H_{E_v}$ then the decomposition \eqref{EqDecomp1} induces a decomposition
\begin{equation}
\label{EqPropCMTateMod1} \Koh^1_\dR(\ulHM,L\dbl z-\zeta\dbr) \;=\; \bigoplus_{\psi\in H_{E_v}}\Koh^\psi(\ulHM,L\dbl y_{i(\psi)}-\psi(y_{i(\psi)})\dbr),
\end{equation}
where $\Koh^\psi(\ulHM,L\dbl y_{i(\psi)}-\psi(y_{i(\psi)})\dbr)$ is free of rank $1$ over $L\dbl y_{i(\psi)}-\psi(y_{i(\psi)})\dbr$. In particular,
\begin{equation}\label{EqPropCMTateMod2} 
\Koh^1_\dR(\ulHM,L) \;=\; \bigoplus_{\psi\in H_{E_v}}\Koh^\psi(\ulHM,R)\otimes_RL
\end{equation}
is the decomposition into generalized eigenspaces of the $E_v$-action. Here 
\[
\Koh^\psi(\ulHM,R)\;:=\; \bigl\{\omega \in \Koh^1_{\dR}(\ulHM, R)\colon \bigl([a]^*-\psi(a)\bigr)^{[E_{v,i(\psi)}:Q_v]_\insep}\cdot \omega = 0 \es\forall\; a \in E_v\cap\End_R(\ulHM) \bigr\}
\]
is a free $R$-module of rank equal to the inseparability degree $[E_{v,i(\psi)}:Q_v]_\insep$ of $E_{v,i(\psi)}$ over $Q_v$. 
\end{Proposition}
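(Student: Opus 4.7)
The structural input is Proposition~\ref{PropFree}, which provides that $\hat M$, and therefore also $\hat\sigma^*\hat M$, is a free $\CO_{E_v,R}$-module of rank one; each of the four assertions then reduces to a computation involving the ring $\CO_{E_v,R}$ or one of its base changes. For the $v$-adic claim, I combine the decomposition $\ulHM = \bigoplus_i \ulHM_i$ of Proposition~\ref{PropFree} with the functoriality of the dual Tate module to obtain $\Koh^1_v(\ulHM, Q_v) = \bigoplus_i \Koh^1_v(\ulHM_i, Q_v)$; each summand has $Q_v$-dimension $\rk\ulHM_i = [E_{v,i}:Q_v]$ and carries an action of the \emph{field} $E_{v,i}$, which forces freeness of rank one.

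For the de Rham realization, Definition~\ref{DefDeRham} gives $\Koh^1_\dR(\ulHM,L\dbl z-\zeta\dbr) = \hat\sigma^*\hat M \otimes_{R\dbl z\dbr} L\dbl z-\zeta\dbr$, a free rank-one module over $\CO_{E_v,R}\otimes_{R\dbl z\dbr} L\dbl z-\zeta\dbr = \CO_{E_v}\otimes_{A_v}L\dbl z-\zeta\dbr = E_v \otimes_{Q_v} L\dbl z-\zeta\dbr$, the last equality holding because $z = \zeta+(z-\zeta)$ is a unit in $L\dbl z-\zeta\dbr$. Under the hypothesis $L \supset \psi(E_v)$ for all $\psi$, Lemma~\ref{LemDecompdR} from the appendix (the local analog of the decomposition $E \otimes_Q K\dbl z-\zeta\dbr = \prod_\psi K\dbl y_\psi - \psi(y_\psi)\dbr$ already used in the introduction) supplies the ring decomposition $E_v\otimes_{Q_v}L\dbl z-\zeta\dbr \cong \prod_{\psi\in H_{E_v}} L\dbl y_{i(\psi)}-\psi(y_{i(\psi)})\dbr$; the associated orthogonal idempotents split the rank-one module into the desired summands \eqref{EqPropCMTateMod1}, each free of rank one over its local factor.

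For the generalized eigenspaces $\Koh^\psi(\ulHM,R)$, I reduce modulo $(z-\zeta)$, so that $\Koh^1_\dR(\ulHM,R)$ becomes a free rank-one module over $\CO_{E_v}\otimes_{A_v,\gamma}R$. By~\eqref{EqOER} together with Lemma~\ref{LemFactors}, the $(i,j)$-block of this ring is $R\dbl y_i\dbr/\prod_{\psi:(i,j)(\psi)=(i,j)}(y_i-\psi(y_i))^{p^{m_i}}$. The character $\psi\colon\CO_{E_v}\to R$ factors through the $(i(\psi),j(\psi))$-block alone, so its kernel $\Fm_\psi$ is the whole ring in the other blocks and is generated by $(y_{i(\psi)}-\psi(y_{i(\psi)}))$ in its own block; the generalized eigenspace is therefore the set of elements of the $(i(\psi),j(\psi))$-block annihilated by some power of this uniformizer. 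Since $R\dbl y_i\dbr$ is a UFD and the $y_i-\psi'(y_i)$ are pairwise coprime primes, this is precisely the principal ideal generated by $\prod_{\psi'\neq \psi}(y_i-\psi'(y_i))^{p^{m_i}}$, which is isomorphic as an $R$-module to $R\dbl y_i\dbr/(y_i-\psi(y_i))^{p^{m_i}}$ and hence free of rank $p^{m_{i(\psi)}}$ over $R$.

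The delicate point is the ring decomposition in the inseparable case: $E_v\otimes_{Q_v}L\dpl z-\zeta\dpr$ need not be reduced, so the decomposition into DVRs is not immediate and must be established at the integral level $\CO_{E_v}\otimes_{A_v}L\dbl z-\zeta\dbr$, where the factorization of $(z-\zeta)$ as a $p^{m_i}$-th power of the uniformizer in each block (Lemma~\ref{LemFactors}) is precisely what produces the genuine complete DVR factors $L\dbl y_{i(\psi)}-\psi(y_{i(\psi)})\dbr$. Confirming this identification is the main effort, and it is the content of the appendix Lemma~\ref{LemDecompdR}.
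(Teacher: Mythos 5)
Your argument is structurally sound and essentially tracks the paper's proof (the paper also reduces everything to the freeness of $\hat M$ over $\CO_{E_v,R}$ from Proposition~\ref{PropFree} and then applies Lemma~\ref{LemDecompdR}), and your computation of the generalized eigenspaces spells out, via \eqref{EqOER} and Lemma~\ref{LemFactors}, what the paper delegates to \eqref{EqDecomp2}. There is, however, one real gap: the hypothesis of Proposition~\ref{PropCMTateMod} is only that $\ulHM$ has complex multiplication by $E_v$, i.e.\ $E_v\subset\QEnd_R(\ulHM)$, whereas Proposition~\ref{PropFree}, on which your entire argument rests, requires the stronger condition $\CO_{E_v}\subset\End_R(\ulHM)$. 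The paper bridges this by first observing that $\Koh^1_v(\ulHM,Q_v)$ and $\Koh^1_\dR(\ulHM,L\dbl z-\zeta\dbr)$ are isogeny invariants and invoking Proposition~\ref{PropCMOE} to replace $\ulHM$ by a quasi-isogenous $\ulHM{}'$ with $\CO_{E_v}\subset\End_R(\ulHM{}')$; without this reduction, you cannot assert that $\hat M$ is free over $\CO_{E_v,R}$ or that $\ulHM$ decomposes as $\bigoplus_i\ulHM_i$. You should insert that step at the start.

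Apart from that, two points differ from the paper in ways worth noting. For the $v$-adic statement the paper appeals to the fully faithful functor $\ulHM\mapsto\Koh^1_v(\ulHM,Q_v)$ and then cites \cite[Lemma~7.2]{BH2}, while you give a more self-contained argument: decompose into $\ulHM_i$, use $\dim_{Q_v}\Koh^1_v(\ulHM_i,Q_v)=\rk\ulHM_i=[E_{v,i}:Q_v]$, and note a $Q_v$-vector space of that dimension carrying an action of the field $E_{v,i}$ is automatically free of rank one over it; this is correct and avoids the external citation, at the cost of requiring the $\CO_{E_v}$-stability up front. For the eigenspace statement your CRT-plus-ideal description of the $\psi$-block inside $R\dbl y_i\dbr/\prod_{\psi'}(y_i-\psi'(y_i))^{p^{m_i}}$ is correct and reproduces the rank $p^{m_i}=[E_{v,i}:Q_v]_\insep$, matching \eqref{EqDecomp2} of the appendix, so modulo the omission above your proof is right.
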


\begin{proof}
By the faithfulness of the functor $\ulHM\to\Koh^1_v(\ulHM,Q_v)$ we have $E_v\subset\End_{Q_v}\Koh^1_v(\ulHM,Q_v)$. So the first statement follows from \cite[Lemma~7.2]{BH2}.

Since $\Koh^1_{\dR}(\ulHM, L\dbl z-\zeta\dbr)$ is an isogeny invariant, we may by Proposition~\ref{PropCMOE} assume that $\CO_{E_v}\subset\End_R(\ulHM)$ and then $\hat M$ is free of rank $1$ over $\CO_{E_v,R}$ by Proposition~\ref{PropFree}. It follows that $\Koh^1_\dR(\ulHM,L\dbl z-\zeta\dbr):=\hat\sigma^*\hat M\otimes_{R\dbl z\dbr}L\dbl z-\zeta\dbr\cong E_v\otimes_{Q_v} L\dbl z-\zeta\dbr$. Now we use Lemma~\ref{LemDecompdR}. In particular, \eqref{EqPropCMTateMod2} and the statement about $\Koh^\psi(\ulHM,R)$ follow from \eqref{EqDecomp2} and the equation $\Koh^1_\dR(\ulHM,L)=\Koh^1_\dR(\ulHM,R)\otimes_RL$.
\end{proof}

The proposition allows us to make two definitions.

\begin{Definition}\label{DefValuationOfomega}
Let $\ulHM$ have complex multiplication by $\CO_{E_v}$ and assume that $L\supset\psi(E_v)$ for all $\psi\in H_{E_v}$. Fix a $\psi\in H_{E_v}$ and let $i:=i(\psi)$. Let $\omega^\circ_\psi\in\Koh^\psi(\ulHM,L\dbl y_{i}-\psi(y_{i})\dbr)$ be an $L\dbl y_{i}-\psi(y_{i})\dbr$-generator whose reduction $\omega^\circ_\psi\mod(y_i-\psi(y_i)) \in \Koh^\psi(\ulHM, L)\big/(y_i-\psi(y_i))\Koh^\psi(\ulHM, L)$ is a generator of the free $R$-module of rank one $\Koh^\psi(\ulHM,R)\big/(y_i-\psi(y_i))\Koh^\psi(\ulHM,R)$. Such a $\omega^\circ_\psi$ is uniquely determined up to multiplication by an element of $R\mal+(y_i-\psi(y_i))L\dbl y_{i}-\psi(y_{i})\dbr$. Note that if $E_{v,i}$ is separable over $Q_v$ then $y_i-\psi(y_i)$ acts trivially on $\Koh^\psi(\ulHM, L)$ and $\Koh^\psi(\ulHM,R)$ is a free $R$-module of rank $1$. Also $L\dbl y_i-\phi(y_i)\dbr=L\dbl z-\zeta\dbr$.

If $\omega_\psi\in \Koh^\psi(\ulHM,L\dbl y_{i}-\psi(y_{i})\dbr)$ is any generator, there is an element $x\in L\dbl y_{i}-\psi(y_{i})\dbr\mal$ with $\omega_\psi=x\,\omega^\circ_\psi$. We define the \emph{valuation of $\omega_\psi$} as $v(\omega_\psi):=v\bigl(x\mod y_{i}-\psi(y_{i})\bigr)$. {\color{red} (NOTE THAT THIS DEFINITION IS WRONG; SEE ERRATUM~\ref{Erratum2})} It only depends on the image of $\omega_\psi$ in $\Koh^1_\dR(\ulHM,L)$ and is also independent of the choice of $\omega^\circ_\psi$. 

Note that if $\ulM = (M, \tau_M)$ is an $A$-motive over $L$ with good model $\ul\CM$ over $R$, and $\ulHM = \ulHM_v(\ul\CM)$ is the local shtuka at $v$ associated with $\ul\CM$ as in Example \ref{AMotLocSht}, then for an $L\dbl y_{i}-\psi(y_{i})\dbr$-generator $\omega_\psi\in \Koh^\psi(\ulM,L\dbl y_{i}-\psi(y_{i})\dbr)=\Koh^\psi(\ulHM,L\dbl y_{i}-\psi(y_{i})\dbr)$ the present definition of $v(\omega_\psi)$ coincides with the definition of $v(\omega_\psi)$ from \eqref{EqValuationOfOmegaPsi}.
\end{Definition}

\begin{Definition} \label{DefCMType}
A \emph{local CM-type} at $v$  is a pair $(E_v, \Phi)$ with $E_v$ a semisimple commutative $Q_v$-algebra and $\Phi = (d_{\psi})_{\psi \in H_{E_v}}$ a tuple of integers $d_\psi\in\BZ$.

If $\ulHM$ is a local shtuka with complex multiplication by a commutative semi-simple $Q_v$-algebra $E_v$ and if $L\supset\psi(E_v)$ for all $\psi\in H_{E_v}$ then the Hodge-Pink lattice $\Fq^\ulHM=\tau_{\hat M}^{-1}(\hat M\otimes_{R\dbl z\dbr}L\dbl z-\zeta\dbr)$ of $\ulHM$ satisfies $\Fq^\ulHM=\prod_{\psi\in H_{E_v}}\bigl(y_{i(\psi)}-\psi(y_{i(\psi)})\bigr)^{-d_\psi}\Koh^\psi(\ulHM,L\dbl y_{i(\psi)}-\psi(y_{i(\psi)})\dbr)$ for integers $d_\psi$ under the decomposition~\eqref{EqPropCMTateMod1}. We call $\Phi=(d_\psi)_{\psi\in H_{E_v}}$ the \emph{local CM-type} of $\ulHM$.

Note that if $\ulM = (M, \tau_M)$ is an $A$-motive over $L$ with good model $\ul\CM$ over $R$, and $\ulHM = \ulHM_v(\ul\CM)$ is the local shtuka at $v$ associated with $\ul\CM$ as in Example \ref{AMotLocSht}, and $E_v:=E\otimes_Q Q_v$, then we see from the isomorphism $\Koh^\psi(\ulM,L\dbl y_{i(\psi)}-\psi(y_{i(\psi)})\dbr)=\Koh^\psi(\ulHM,L\dbl y_{i(\psi)}-\psi(y_{i(\psi)})\dbr)$ that the local CM-type of $\ulHM$ is equal to the CM-type of $\ulM$ under the identification $H_E\isoto H_{E_v}$, which extends $\psi\colon E\to Q^\alg\subset Q_v^\alg$ to the completion $\psi\colon E_v\to Q_v^\alg$.
\end{Definition}

\section{Periods of Local Shtukas with Complex Multiplication}\label{SectPeriods}
\setcounter{equation}{0}

\begin{Point}\label{Point4.1}
In this section we let $\ulHM$ be a local $\hat\sigma$-shtuka over $R$ with complex multiplication by $\CO_{E_v}$ where $E_v$ is a commutative semi-simple $Q_v$-algebra as in the preceding section. From Theorem~\ref{ThmOmega} on we assume that the factors $E_{v,i}$ of $E_v$ are \emph{separable} field extensions of $Q_v$. Throughout we assume that $L\supset\psi(E_v)$ for all $\psi\in H_{E_v}$. Using Proposition~\ref{PropFree} we may choose a basis of $\ulHM$ and write it under the decomposition \eqref{EqOER} as 
\[
\ulHM\;\cong\;\prod_i\prod_{j\in\BZ/f_i\BZ}(R\dbl y_i\dbr,\tau_{i,j})\qquad\text{with}\quad \tau_{i,j}\in R\dbl y_i\dbr[\tfrac{1}{z-\zeta}]\mal\,.
\]
Let $c\in\BN_0$ be such that $(z-\zeta)^c\tau_{ij}, (z-\zeta)^c\tau_{ij}^{-1}\in R\dbl y_i\dbr$. Since the $y_i-\phi(y_i)$ for $\phi\in H_{E_v}$ with $(i,j)(\phi)=(i,j)$ are prime elements in the factorial ring $R\dbl y_i\dbr$, Lemma~\ref{LemFactors} applied to $(z-\zeta)^c\tau_{ij}\cdot(z-\zeta)^c\tau_{ij}^{-1}=(z-\zeta)^{2c}$ shows that 
\begin{equation}\label{EqTauij}
\tau_{i,j} \;=\; \epsilon_{i,j}\cdot \prod_{\phi\in H_{E_v}\colon (i,j)(\phi) = (i,j)}\bigl(y_i-\phi(y_i)\bigr)^{d_{\phi}}
\end{equation}
for a unit $\epsilon_{i,j}\in R\dbl y_i\dbr\mal$ and integers $d_\phi\in\BZ$. By Definition~\ref{DefCMType} the tuple $\Phi=(d_\phi)_\phi$ is the local CM-type of $\ulHM$.

Note that we can view $\ulHM$ as the tensor product $\ulHM_{E_v,0}\otimes\bigotimes_\phi\ulHM_{E_v,\phi}{}^{\otimes d_\phi}$ over $\CO_{E_v,R}$ of $\ulHM_{E_v,0}:=(\CO_{E_v,R},\tau_0=\prod_{i,j}\epsilon_{i,j})$ and all the $d_\phi$-th powers of $\ulHM_{E_v,\phi}:=(\CO_{E_v,R},\prod_{i,j}\tau_{\phi,i,j})$ where
\[  \tau_{\phi,i,j} =
    \begin{cases}
            1 &         \text{if } (i,j)\ne (i,j)(\phi),\\
            y_i-\phi(y_i) &         \text{if } (i,j)=(i,j)(\phi).
    \end{cases} 
\]
Likewise the cohomology realizations decompose as tensor products 
\[
\Koh^1_v(\ulHM,A_v)\;\cong\;\Koh^1_v(\ulHM_{E_v,0},A_v)\otimes\bigotimes_{\phi\in H_{E_v}}\Koh^1_v(\ulHM_{E_v,\phi},A_v)^{\otimes d_\phi}\,, 
\]
where the tensor product is over $\CO_{E_v}$, and 
\[
\Koh^1_\dR(\ulHM,L\dbl z-\zeta\dbr)\;\cong\;\Koh^1_\dR(\ulHM_{E_v,0},L\dbl z-\zeta\dbr)\otimes\bigotimes\limits_{\phi\in H_{E_v}}\Koh^1_\dR(\ulHM_{E_v,\phi},L\dbl z-\zeta\dbr)^{\otimes d_\phi}\,,
\]
where the tensor product is over $E_v\otimes_{Q_v}L\dbl z-\zeta\dbr$. For the purpose of computing the period isomorphism $h_{v,\dR}$, we may therefore treat all factors of $\tau_{i,j}$ separately; see \ref{Point4.4} below.
\end{Point}

\begin{Point}\label{Point4.2}
We first treat the case of $\ulHM_{E_v,0}=\bigl(\CO_{E_v,R},\tau_0=(\epsilon_{i,j})_{i,j}\bigr)$, where $\epsilon_{i,j}\in R\dbl y_i\dbr\mal$. We compute the $\tau$-invariants $\Koh^1_v(\ulHM_{E_v,0},A_v)$ as the set of tuples $(c_{i,j})_{i,j}$ with $c_{i,j}:=\sum_{n=0}^\infty c_{i,j,n} \,y_i^n\in L^\sep\dbl y_i\dbr$ subject to the condition 
\[
(c_{i,j})_{i,j} \; = \; \tau_0\circ\hat\sigma\bigl((c_{i,j})_{i,j}\bigr)\,,\quad\text{that is}\quad c_{i,j} \;=\; \epsilon_{i,j}\cdot\hat\sigma(c_{i,j-1}) \quad\text{for all $i,j$}.
\]
The latter implies $c_{i,j}=\epsilon_{i,j}\cdot\hat\sigma(\epsilon_{i,j-1})\cdot\ldots\cdot\hat\sigma^{j-1}(\epsilon_{i,1})\cdot\hat\sigma^j(c_{i,0})$ and $c_{i,0}=\epsilon_i\cdot\hat\sigma^{f_i}(c_{i,0})$, where we set $\epsilon_i:=\epsilon_{i,0}\cdot\hat\sigma(\epsilon_{i,f_i-1})\cdot\ldots\cdot\hat\sigma^{f_i-1}(\epsilon_{i,1})=\sum_{n=0}^\infty b_{i,n}\,y_i^n\in R\dbl y_i\dbr\mal$. In particular $b_{i,0}\in R\mal$. The resulting formulas for the coefficients
\[
c_{i,0,0} \;=\;b_{i,0}\cdot c_{i,0,0}^{\tilde q_i} \qquad\text{and}\qquad c_{i,0,n} -b_{i,0}\cdot c_{i,0,n}^{\tilde q_i} \;=\;{\TS\sum\limits_{\ell=1}^n}b_{i,\ell}\cdot c_{i,0,n-\ell}^{\tilde q_i},
\]
where $\tilde q_i=q_v^{f_i}$, lead to the formulas
\[
c_{i,0,0}^{\tilde q_i-1} \;=\;b_{i,0}^{-1} \qquad\text{and}\qquad \frac{c_{i,0,n}}{c_{i,0,0}} -\bigl(\frac{c_{i,0,n}}{c_{i,0,0}}\bigr)^{\tilde q_i} \;=\;\sum_{\ell=1}^n\frac{b_{i,\ell}}{b_{i,0}}\cdot \bigl(\frac{c_{i,0,n-\ell}}{c_{i,0,0}}\bigr)^{\tilde q_i}\,,
\]
which have solutions $c_{i,0,n}\in \CO_{L^\sep}$ with $c_{i,0,0}\in \CO_{L^\sep}\mal$. In particular, the field extension of $L$ generated by the $c_{i,j,n}$ is unramified. Then $(c_{i,j})_{i,j}$ is an $\CO_{E_v}$-basis of $\Koh^1_v(\ulHM_{E_v,0},A_v)$. Under the period isomorphism $h_{v,\dR}$ it is mapped to 
\[
(\epsilon_{i,j}^{-1}c_{i,j})_{i,j}\;\in\;\bigl(\CO_{E_v}\otimes_{A_v}\CO_{\BC_v}\dbl z\dbr\bigr)\mal\;\subset\; E_v\otimes_{Q_v}\BC_v\dbl z-\zeta\dbr\;=\;\Koh^1_\dR(\ulHM_{E_v,0},\BC_v\dbl z-\zeta\dbr)\,.
\]
\end{Point}

\begin{Point}\label{Point4.3}
Next we compute the period isomorphism for the local shtuka $\ulHM_{E_v,\phi}$ from above. For an element $0\ne\xi\in(\pi_L)\subset R$ we consider the equation
\begin{equation}\label{EqTPlus}
\hat\sigma^{f_i}(\tplus{y_i,\xi})\;=\;(y_i-\xi)\cdot\tplus{y_i,\xi}\qquad\text{for}\qquad\tplus{y_i,\xi}\;:=\;\sum_{n=0}^\infty \tplusminus_n y_i^n\;\in\;L^\sep\dbl y_i\dbr\,.
\end{equation}
The equation can be solved by taking $\tplusminus_n\in L^\sep$ with $\tplusminus_0^{\tilde q_i-1}=-\xi$ and $\tplusminus_n^{\tilde q_i}+\xi \tplusminus_n=\tplusminus_{n-1}$. This implies that $|\tplusminus_n|=|\xi|^{\tilde q_i^{-n}/(\tilde q_i-1)}<1$ and $\tplusminus_n\in\CO_{L^\sep}$. Note that this solution is not unique, but that every other solution $\ttplus{y_i,\xi}$ of \eqref{EqTPlus} is obtained by multiplying $\tplus{y_i,\xi}$ by an element of $\BF_{\tilde v_i}\dbl y_i\dbr=\CO_{E_{v,i}}$, because $\hat\sigma^{f_i}\Bigl(\tfrac{\ttplus{y_i,\xi}}{\tplus{y_i,\xi}}\Bigr)=\tfrac{(y_i-\xi)\cdot\ttplus{y_i,\xi}}{(y_i-\xi)\cdot\tplus{y_i,\xi}}=\tfrac{\ttplus{y_i,\xi}}{\tplus{y_i,\xi}}\in L^\sep\dbl y_i\dbr$ is invariant under $\hat\sigma^{f_i}$ and hence lies in $\BF_{\tilde v_i}\dbl y_i\dbr$.

According to the decomposition \eqref{EqOER} the $\tau$-invariants $\check u\in\Koh^1_v(\ulHM_{E_v,\phi},A_v)$ of $\ulHM_{E_v,\phi}$ have the form $\check u=(\check u_{i,j})_{i,j}\in\prod_{i,j}L^\sep\dbl y_i\dbr$ with $\check u=\tau_\phi\cdot\hat\sigma(\check u)$, that is 
\[
\check u_{i,j}\;=\;\begin{cases}
            \hat\sigma(\check u_{i,j-1}) &         \text{if } (i,j)\ne (i,j)(\phi)\,,\\
            \bigl(y_i-\phi(y_i)\bigr)\cdot\hat\sigma(\check u_{i,j-1}) &         \text{if } (i,j)=(i,j)(\phi)\,.
    \end{cases} 
\]
For $j,j'\in\BZ/f_i\BZ$ we denote by $(j,j')$ the representative of  $j-j'$ in $\{ 0,\ldots, f_i-1\}$. This implies that $\check u_{(i,j)(\phi)}=\bigl(y_{i(\phi)}-\phi(y_{i(\phi)})\bigr)\cdot\hat\sigma^{f_{i(\phi)}}(\check u_{(i,j)(\phi)})$, and $\check u_{i(\phi),j}= \hat\sigma^{(j,j(\phi))}(\check u_{(i,j)(\phi)})$, as well as $\check u_{i,j}\in\BF_{\tilde v_i}\dbl y_i\dbr$ for all $i\ne i(\phi)$ and all $j$. In particular an $\CO_{E_v}$-basis of $\Koh^1_v(\ulHM_{E_v,\phi},A_v)$ is given by
\begin{equation}\label{EqTateModMPhi}
\check u\;=\;(\check u_{i,j})_{i,j}\qquad\text{with}\qquad\check u_{i,j}\;=\;\hat\sigma^{(j,j(\phi))}(\tplus{y_i,\phi(y_i)})^{-\delta_{i,i(\phi)}}\;=\;(\tplus{y_i,\phi(y_i)^{q_v^{(j,j(\phi))}}})^{-\delta_{i,i(\phi)}}
\end{equation}
where $\delta_{i,i(\phi)}$ is the Kronecker $\delta$. The comparison isomorphism $h_{v,\dR}$ sends this $\check u$ to the element
\begin{equation}\label{EqPeriodMPhi}
\tau_\phi^{-1}\cdot \check u\;=\;\biggl(\Bigl(\bigl(y_i-\phi(y_i)\bigr)^{\delta_{j,j(\phi)}}\cdot\hat\sigma^{(j,j(\phi))}(\tplus{y_i,\phi(y_i)})\Bigr)^{-\delta_{i,i(\phi)}}\biggr)_{i,j}
\end{equation}
of $E_v\otimes_{Q_v}\BC_v\dpl z-\zeta\dpr\;=\;\Koh^1_\dR\bigl(\ulHM_{E_v,\phi},\BC_v\dpl z-\zeta\dpr\bigr)$.
\end{Point}

\begin{Point}\label{Point4.4}
Putting everything together we see that our $\ulHM\cong(\CO_{E_v,R},\prod_{i,j}\tau_{i,j})$ with $\tau_{i,j}$ from \eqref{EqTauij} has 
\[
\check u\;=\;(\check u_{i,j})_{i,j}\;=\;\Bigl(c_{i,j}\cdot\!\prod_{\phi\in H_{E_{v,i}}}\hat\sigma^{(j,j(\phi))}(\tplus{y_i,\phi(y_i)})^{-d_\phi}\Bigr)_{i,j}
\]
as an $\CO_{E_v}$-basis of $\Koh^1_v(\ulHM,A_v)\cong\Koh^1_v(\ulHM_{E_v,0},A_v)\otimes\bigotimes_{\phi\in H_{E_v}}\Koh^1_v(\ulHM_{E_v,\phi},A_v)^{\otimes d_\phi}$, where the tensor product is over $\CO_{E_v}$. Under $h_{v,\dR}$ this $\check u$ is mapped to the element
\begin{equation}\label{EqPeriodM}
\tau_{\hat M}^{-1}\cdot \check u\;=\;\biggl(\epsilon_{i,j}^{-1}c_{i,j}\cdot\!\prod_{\phi\in H_{E_{v,i}}}\Bigl(\bigl(y_i-\phi(y_i)\bigr)^{\delta_{j,j(\phi)}}\cdot\hat\sigma^{(j,j(\phi))}(\tplus{y_i,\phi(y_i)})\Bigr)^{-d_\phi}\biggr)_{i,j}
\end{equation}
of $E_v\otimes_{Q_v}\BC_v\dpl z-\zeta\dpr\;\cong\;\Koh^1_\dR\bigl(\ulHM,\BC_v\dpl z-\zeta\dpr\bigr)\cong\Koh^1_\dR(\ulHM_{E_v,0},\BC_v\dbl z-\zeta\dbr)\otimes\bigotimes\limits_{\phi\in H_{E_v}}\Koh^1_\dR(\ulHM_{E_v,\phi},\BC_v\dbl z-\zeta\dbr)^{\otimes d_\phi}$, where the tensor product is over $E_v\otimes_{Q_v}\BC_v\dbl z-\zeta\dbr$.
\end{Point}

\begin{Remark}\label{RemLubinTate}
Note that $\ulHM_{E_v,\phi}\otimes_{\CO_{E_v}}\CO_{E_{v,i}}$ with $i=i(\phi)$ is the local $\hat\sigma$-shtuka associated with a Lubin-Tate formal group, and so our treatment is analogous to Colmez's \cite[\S\,I.2]{Colmez93}. Namely, let $\hat G =\hat\BG_{a,R}= \Spf R\dbl X\dbr$ be the formal additive group over $R$ with an action of $\CO_{E_{v,i}}=\BF_{\tilde v_{i}}\dbl y_{i}\dbr$ given by 
\begin{eqnarray*}
[\lambda]:  X & \longmapsto & \phi(\lambda)\cdot X \;=\; \lambda^{q_v^{j(\phi)}}\!\cdot X \qquad\text{for }\lambda\in\BF_{\tilde v_{i}}\,, \\[2mm]
\,[y_{i}]: X & \longmapsto & X^{\tilde q_i} + \phi(y_{i}) \cdot X\,.
\end{eqnarray*} 
Then $\hat G$ is the Lubin-Tate formal group over $R$ associated with $\CO_{\phi(E_{v,i})}$; see \cite{LT}. It is a \emph{$z$-divisible local Anderson module} in the sense of \cite[Definition~7.1]{HartlSingh}. For an element $a\in\CO_{E_{v,i}}$ let $\hat G[a]:=\ker[a]$. Under the anti-equivalence between $z$-divisible local Anderson modules and effective local $\hat\sigma$-shtukas over $S= \Spec R$ from \cite[Theorem~8.3]{HartlSingh} the associated local shtuka is 
\[
\hat M \;:=\; \hat M(\hat G) \;:=\; \invlim[n]\, \Hom_R(\hat G[z^n], \BG_{a,R}) \;=\; \invlim[n]\, \Hom_R(\hat G[y_i^{ne_i}], \BG_{a,R}) \;=\; \bigoplus_{k=0}^{ f_i-1}R\dbl y_i\dbr \tau^k
\]
with $\tau^0:=\id\colon \hat G \isoto \hat \BG_{a,R}$ and $\tau^k:= \Frob_{ q_v,\hat\BG_{a,R}}\circ\tau^0\colon X\mapsto X^{q_v^k}$. It is an $\CO_{E_{v,i},R}$-module via the $\CO_{E_{v,i}}$-action on $\hat G[z^n]$ and the $R$-action on $\BG_{a,R}$, and is equipped with the Frobenius $\tau_{\hat M}\colon\hat\sigma^*\hat M\to\hat M$ given by $\hat\sigma_{\!\hat M}^*m\mapsto \Frob_{ q_v, \BG_{a,R}}\circ m$ for $m\in\hat M$. We set $\ulHM(\hat G):=(\hat M,\tau_{\hat M})$. In particular, we see that $\lambda\in\BF_{\tilde v_i}$ acts on $R\dbl y_i\dbr \tau^k$ as $\lambda^{q_v^{k+j(\phi)}}$ and so $\hat M/\Fb_{i,j}\hat M=R\dbl y_i\dbr \tau^{(j,j(\phi))}$ under the decomposition \eqref{EqOER}. Since $\tau_{\hat M}(\hat\sigma_{\!\hat M}^*\tau^{f_i-1})=\tau^{f_i}=[y_i]-\phi(y_i)\colon X\mapsto X^{\tilde q_i}=\bigl([y_i]-\phi(y_i)\bigr)(X)$, we see that
\[  \tau_{\hat M}\;=\;(\tau_{\hat M,j})_j\qquad\text{with}\qquad \tau_{\hat M,j} =
    \begin{cases}
            1 &         \text{if } j\ne j(\phi)\,,\\
            y_i-\phi(y_i) &         \text{if } j=j(\phi)\,,
    \end{cases} 
\]
that is, $\ulHM(\hat G)=\ulHM_{E_v,\phi}\otimes_{\CO_{E_v}}\CO_{E_{v,i}}$.

Moreover, if we want to also consider the other components of $\ulHM_{E_v,\phi}$ for $i\ne i(\phi)$ we take the divisible local Anderson module $\hat G_{E_v,\phi}:=\hat G\times\prod_{i\ne i(\phi)}(E_{v,i}/\CO_{E_{v,i}})_R$. It has local shtuka $\ulHM(\hat G_{E_v,\phi})=\ulHM(\hat G)\oplus\bigoplus_{i\ne i(\phi)}\ulHM\bigl((E_{v,i}/\CO_{E_{v,i}})_R\bigr)=\ulHM_{E_v,\phi}$, because $\ulHM\bigl((E_{v,i}/\CO_{E_{v,i}})_R\bigr)=(\CO_{E_{v,i},R},\tau=1)$.
\end{Remark}

\begin{Point}
We want to describe the Galois action of $\sG_L$ on $\Koh^1_v(\ulHM_{E_v,\phi},A_v)$. Recall from \cite[Definition~4.8, Proposition~4.9 and Remark~4.10]{HartlKim} that the Tate module of $\hat G$ is defined as $T_v\hat G:=\Hom_{A_v}\bigl(Q_v/A_v,\hat G(L^\sep)\bigr)$ and that there is a perfect pairing of $A_v$-modules
\begin{equation}\label{EqPairing}
T_v\hat G\times \Koh^1_v(\ulHM(\hat G),A_v)\;\longto\;\Hom_{\BF_v}(Q_v/A_v,\BF_v)\,,\quad(f, m) \longmapsto m\circ f\,,
\end{equation}
which is equivariant for the actions of $\sG_L$ and $\End_R\bigl(\ulHM(\hat G)\bigr)=\End_R(\hat G)^{\rm op}$. Here the $A_v$-module $\Hom_{\BF_v}(Q_v/A_v,\BF_v)\cong\wh\Omega^1_{A_v/\BF_v}\cong\BF_v\dbl z\dbr dz$ is free of rank one; see \cite[Equation~(4.5) before Proposition~4.9]{HartlKim}. We have already computed $\Koh^1_v(\ulHM_{E_v,\phi},A_v)=E_{v,i}\cdot(\check u_{i,j})_{i,j}$ in \eqref{EqTateModMPhi}. We will now compute $T_v\hat G_{E_v,\phi}$ and the action of $\sG_L$ on both $T_v\hat G_{E_v,\phi}$ and $\Koh^1_v(\ulHM_{E_v,\phi},A_v)$. Let again $i=i(\phi)$. Since $\CO_{E_{v,i}}$ acts on $\hat G(L^\sep)$ we have
\begin{align*}
T_v\hat G \es=\es & \Hom_{\CO_{E_{v,i}}}\bigl(\CO_{E_{v,i}}\otimes_{A_v}(Q_v/A_v),\hat G(L^\sep)\bigr)\\[2mm]
\es=\es & \Hom_{\CO_{E_{v,i}}}\bigl(E_{v,i}/\CO_{E_{v,i}},\hat G(L^\sep)\bigr) && \ni f\\[2mm]
\es=\es & \TS \bigl\{(P_{n})_{n}\in\prod\limits_{{n}\in\BN_{0}}\hat G[y_i^{n}](L^\sep)\colon [y_i](P_{n})=P_{n-1}\bigr\} && \ni (P_{n})_{n}:=\bigl(f(y_i^{-n})\bigr)_{n}\,,
\end{align*}
where $f$ is reconstructed from $(P_n)_n$ as $f(ay_i^{-n}):=[a](P_n)$ for $a\in\CO_{E_{v,i}}\mal$. From equation~\eqref{EqTPlus} we see that $\tplus{y_i,\phi(y_i)}=\sum_{n=0}^\infty \tplusminus_n y_i^n$ satisfies
\[
[y_i](\tplusminus_0)\;=\;\tplusminus_0^{\tilde q_i}+\phi(y_i)\tplusminus_0\;=\;0 \qquad\text{and}\qquad [y_i](\tplusminus_n)\;=\;\tplusminus_n^{\tilde q_i}+\phi(y_i)\tplusminus_n\;=\;\tplusminus_{n-1}\,.
\]
Thus $\tplusminus_{n-1}\in\hat G[y_i^n](L^\sep)$ and $T_v\hat G=\CO_{E_{v,i}}\cdot(\tplusminus_{n-1})_n$. To compute the $\sG_L$-action on $T_v\hat G$ we need the following
\end{Point}

\begin{Proposition}\label{PropCyclotomic}
Let $\BF_{\tilde v_i}\dpl\phi(y_i)\dpr_\infty:=\BF_{\tilde v_i}\dpl\phi(y_i)\dpr(\tplusminus_n:n\in\BN_0)$. Then there is an isomorphism of topological groups
\[
\chi\colon\Gal\bigl(\BF_{\tilde v_i}\dpl\phi(y_i)\dpr_\infty/\BF_{\tilde v_i}\dpl\phi(y_i)\dpr\bigr)\;\isoto\;\BF_{\tilde v_i}\dbl y_i\dbr\mal\;=\;\CO_{E_{v,i}}\mal
\]
satisfying $g(\tplus{y_i,\phi(y_i)}):=\sum_{n=0}^\infty g(\tplusminus_n)y_i^n=\chi(g)\cdot \tplus{y_i,\phi(y_i)}$ in $\BF_{\tilde v_i}\dpl\phi(y_i)\dpr_\infty\dbl y_i\dbr$ for $g$ in the Galois group. The isomorphism $\chi$ is independent of the choice of $\tplus{y_i,\phi(y_i)}$ and is called the \emph{cyclotomic character} of the field $E_{v,i}=\BF_{\tilde v_i}\dpl y_i\dpr$.
\end{Proposition}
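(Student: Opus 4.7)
The plan is to construct $\chi$ from the uniqueness of solutions to \eqref{EqTPlus} noted in Point~\ref{Point4.3}, and to deduce surjectivity and continuity from classical Lubin--Tate theory~\cite{LT}. For $g$ in the Galois group I would apply $g$ coefficient-wise to $\tplus{y_i,\phi(y_i)}=\sum_{n\ge 0}\tplusminus_n\,y_i^n$. The resulting power series $g(\tplus{y_i,\phi(y_i)})=\sum_n g(\tplusminus_n)\,y_i^n$ again satisfies the equation $\hat\sigma^{f_i}(X)=(y_i-\phi(y_i))\cdot X$: the operators $g$ and $\hat\sigma^{f_i}$ on $L^\sep\dbl y_i\dbr$ commute, because both act trivially on $y_i$ and as coefficient-wise ring endomorphisms on $L^\sep$ for which $g(x^{\tilde q_i})=g(x)^{\tilde q_i}$; and $g$ fixes $y_i-\phi(y_i)$, since $\phi(y_i)$ lies in the base field $\BF_{\tilde v_i}\dpl\phi(y_i)\dpr$. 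The uniqueness from Point~\ref{Point4.3} then produces a unique $\chi(g)\in\BF_{\tilde v_i}\dbl y_i\dbr=\CO_{E_{v,i}}$ with $g(\tplus{y_i,\phi(y_i)})=\chi(g)\cdot\tplus{y_i,\phi(y_i)}$. Independence of the choice of $\tplus{y_i,\phi(y_i)}$ follows from the same uniqueness: any other solution $\ttplus{y_i,\phi(y_i)}$ equals $\alpha\cdot\tplus{y_i,\phi(y_i)}$ for some $\alpha\in\CO_{E_{v,i}}\mal$, and since $g$ fixes $\alpha$ the resulting value $\chi(g)$ is unchanged.

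Next I verify that $\chi(g)$ lies in $\CO_{E_{v,i}}\mal=\BF_{\tilde v_i}\dbl y_i\dbr\mal$ and that $\chi$ is a group homomorphism. The constant term $\chi(g)(0)\in\BF_{\tilde v_i}$ satisfies $g(\tplusminus_0)=\chi(g)(0)\cdot\tplusminus_0$, and since $\tplusminus_0^{\tilde q_i-1}=-\phi(y_i)$ is fixed by $g$ we obtain $\chi(g)(0)^{\tilde q_i-1}=1$, hence $\chi(g)(0)\in\BF_{\tilde v_i}\mal$ and $\chi(g)\in\BF_{\tilde v_i}\dbl y_i\dbr\mal$. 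Multiplicativity $\chi(gh)=\chi(g)\,\chi(h)$ follows from
\[
(gh)(\tplus{y_i,\phi(y_i)})\;=\;g\bigl(\chi(h)\cdot\tplus{y_i,\phi(y_i)}\bigr)\;=\;\chi(h)\cdot\chi(g)\cdot\tplus{y_i,\phi(y_i)}\,,
\]
where we use that $g$ fixes the coefficients of $\chi(h)$, which lie in $\BF_{\tilde v_i}\subset\BF_{\tilde v_i}\dpl\phi(y_i)\dpr$. Injectivity is immediate: $\chi(g)=1$ forces $g(\tplusminus_n)=\tplusminus_n$ for every $n$, so $g$ fixes all of $\BF_{\tilde v_i}\dpl\phi(y_i)\dpr_\infty$.

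The main remaining step, and the principal obstacle, is surjectivity together with the identification of $\chi$ as a homeomorphism. For this I would invoke classical Lubin--Tate theory~\cite{LT}. The formal group $\hat G$ of Remark~\ref{RemLubinTate} with $[y_i](X)=X^{\tilde q_i}+\phi(y_i)\cdot X$ is by construction the Lubin--Tate formal group over $R$ associated with the uniformizer $\phi(y_i)$ of $\CO_{\phi(E_{v,i})}$, and the recursion $[y_i](\tplusminus_n)=\tplusminus_{n-1}$, $[y_i](\tplusminus_0)=0$ obtained from \eqref{EqTPlus} exhibits $(\tplusminus_{n-1})_{n\ge 1}$ as an $\CO_{E_{v,i}}$-generator of the Tate module $T_v\hat G$. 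Classical Lubin--Tate theory then tells us that $\BF_{\tilde v_i}\dpl\phi(y_i)\dpr_\infty=\bigcup_n\BF_{\tilde v_i}\dpl\phi(y_i)\dpr(\tplusminus_0,\ldots,\tplusminus_{n-1})$ is a totally ramified abelian extension whose Galois group is canonically isomorphic, via the action on $y_i^n$-torsion, to $\invlim[n](\CO_{E_{v,i}}/y_i^n)\mal=\CO_{E_{v,i}}\mal$, with $a\in\CO_{E_{v,i}}\mal$ acting on $\hat G[y_i^n]$ through $[a]$. Matching this action against the defining formula for $\chi$ on the $\tplusminus_n$ identifies $\chi$ with this Lubin--Tate character, yielding surjectivity and compatibility with the profinite topologies at every finite level, and hence the desired topological isomorphism.
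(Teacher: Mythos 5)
Your proof is correct, and the construction of $\chi$ is essentially the paper's: you apply $g$ coefficient-wise to $\tplus{y_i,\phi(y_i)}$, check that the result still solves \eqref{EqTPlus} (using $g\circ\hat\sigma^{f_i}=\hat\sigma^{f_i}\circ g$ and $g(\phi(y_i))=\phi(y_i)$), and use the observation from \ref{Point4.3} that any two solutions differ by an element of $\BF_{\tilde v_i}\dbl y_i\dbr$ to extract $\chi(g)\in\CO_{E_{v,i}}$. Your verification of invertibility via the constant term, multiplicativity, injectivity, and independence of the choice all match the paper. Where you diverge is the surjectivity/homeomorphism step: the paper proves this directly by observing that the recursions $\tplusminus_0^{\tilde q_i-1}=-\phi(y_i)$ and $\tplusminus_n^{\tilde q_i}+\phi(y_i)\tplusminus_n=\tplusminus_{n-1}$ are Eisenstein at each stage (with $\tplusminus_{n-1}$ a uniformizer of the previous level), so that the finite layer $\BF_{\tilde v_i}\dpl\phi(y_i)\dpr(\tplusminus_0,\ldots,\tplusminus_{n-1})$ has Galois group of order exactly $(\tilde q_i-1)\tilde q_i^{n-1}=\#(\CO_{E_{v,i}}/y_i^n)\mal$, which combined with injectivity forces $\chi$ to be bijective layer by layer. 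You instead cite the Lubin--Tate theorem of \cite{LT} wholesale. Both routes are valid: the paper's Eisenstein argument is precisely the core of the proof of the Lubin--Tate theorem, so the content is the same, but the paper's version stays self-contained and avoids the bookkeeping needed to match your $\chi$ (a power series in $y_i$ acting by coefficient multiplication) against the Lubin--Tate character $u$ (acting by $[u]$ on torsion points and living in $\CO_{\phi(E_{v,i})}\mal$), which differ by a twist through $\phi$ as the paper spells out \emph{after} the proposition. If you keep your route, you should make that translation explicit rather than saying ``matching this action $\dots$ identifies $\chi$''.
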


\begin{proof}
The existence of $\chi$ follows from the equation $\hat\sigma^{f_i}(\tplus{y_i,\phi(y_i)})=(y_i-\phi(y_i))\cdot \tplus{y_i,\phi(y_i)}$, which implies that $\chi(g):=\frac{g(\tplus{y_i,\phi(y_i)})}{\tplus{y_i,\phi(y_i)}}$ is $\hat\sigma^{f_i}$-invariant, that is $\chi(g)\in\BF_{\tilde v_i}\dbl y_i\dbr\mal$. Furthermore, $\chi$ is an isomorphism because $\tplusminus_{n-1}$ is a uniformizing parameter of $\BF_{\tilde v_i}\dpl\phi(y_i)\dpr(\tplusminus_0,\ldots,\tplusminus_{n-1})$ and so the equations defining the $\tplusminus_n$ are irreducible by Eisenstein. Every other solution of \eqref{EqTPlus} is of the form $a\cdot\tplus{y_i,\phi(y_i)}$ with $a\in\BF_{\tilde v_i}\dbl y_i\dbr$ and so $g(a\cdot\tplus{y_i,\phi(y_i)})=a\cdot g(\tplus{y_i,\phi(y_i)})=\chi(g)\cdot a\cdot\tplus{y_i,\phi(y_i)}$. This shows that $\chi(g)$ does not depend on the solution $\tplus{y_i,\phi(y_i)}$.
\end{proof}

Let $\CI_L\subset\sG_L$ be the inertia subgroup and similarly for other fields. By local class field theory, see Lubin and Tate \cite[Corollary on p.~386]{LT}, the image of $g\in\CI_{\phi(E_{v,i})}$ in $\sG_{\phi(E_{v,i})}^{\rm ab}$ equals the norm residue symbol $\bigl(\chi(g)^{-1}|_{y_i=\phi(y_i)},\phi(E_{v,i})^{\rm ab}/\phi(E_{v,i})\bigr)$ where $\phi(E_{v,i})^{\rm ab}$ is the maximal abelian extension of $\phi(E_{v,i})$ in $Q_v^\sep$. In general, the homomorphism $\chi_{L}\colon\CI_{L}\to\CO_{L}\mal$ with $g|_{L^{\rm ab}}=(\chi_L(g)^{-1},L^{\rm ab}/L)$ is sometimes called the \emph{character of local class field theory} of the field $L$. So we see that $\chi(g)|_{y_i=\phi(y_i)}= \chi_{\phi(E_{v,i})}(g)$. If $L$ is separable over $\phi(E_{v,i})$ these characters are compatible for $g\in\CI_L$ in the sense that $\chi_{\phi(E_{v,i})}(g)=N_{L/\phi(E_{v,i})}(\chi_L(g))$. 

\begin{Point}
From $T_v\hat G=\CO_{E_{v,i}}\cdot(\tplusminus_{n-1})_n$ it follows that $g$ acts on $T_v\hat G$ in the same way as an endomorphism in $\CO_{E_{v,i}}\mal$. Let us compute this endomorphism. We write $\chi(g)=\sum_{k=0}^\infty a_k y_i^k$ with $a_k\in\BF_{\tilde v_i}$. Then the expansion $g(\tplus{y_i,\phi(y_i)})=\chi(g)\cdot\tplus{y_i,\phi(y_i)}=\sum_{n=0}^\infty\sum_{k=0}^na_k\tplusminus_{n-k}y_i^n$ implies that $g(\tplusminus_n)=\sum_{k=0}^na_k\tplusminus_{n-k}=\sum_{k=0}^n\phi(a_k^{q_v^{-j(\phi)}})[y_i^k](\tplusminus_n)$. Thus every element $g\in\CI_L$ acts on $T_v\hat G$ as the endomorphism $\sum_{k=0}^\infty a_k^{q_v^{-j(\phi)}}y_i^k=\hat\sigma^{-j(\phi)}(\chi(g))=\phi^{-1}\bigl(\chi(g)|_{y_i=\phi(y_i)}\bigr)=\phi^{-1}\circ\chi_{\phi(E_{v,i})}(g)\in\CO_{E_{v,i}}\mal$ and on $T_v\hat G_{E_v,\phi}$ as the endomorphism $\bigl(\hat\sigma^{-j(\phi)}(\chi(g))^{\delta_{i,i(\phi)}}\bigr)_i\in\CO_{E_v}\mal$.
\end{Point}

\begin{Definition}
We define the character $\chi_{E_v,\phi}:=\bigl((\phi^{-1}\circ\chi_{\phi(E_{v,i})})^{\delta_{i,i(\phi)}}\bigr)_i\colon\CI_L\to\CO_{E_v}\mal$ by mapping $g\mapsto\bigl(\hat\sigma^{-j(\phi)}(\chi(g))^{\delta_{i,i(\phi)}}\bigr)_i=\bigl(1,\ldots,1,\phi^{-1}\circ\chi_{\phi(E_{v,i})}(g),1,\ldots,1\bigr)$.
\end{Definition}

\begin{Point}\label{Point4.9}
Due to the equivariance of the pairing \eqref{EqPairing} under $\sG_L$ and $\End_R(\hat G_{E_v,\phi})$ the action of $g\in\sG_L$ on $\Koh^1_v(\ulHM(\hat G_{E_v,\phi}),A_v)$ is given by the endomorphism $\chi_{E_v,\phi}(g)^{-1}$. We can also compute this action directly as follows. It factors through the restriction of $g$ to $\Gal\bigl(\BF_{\tilde v_i}\dpl\phi(y_i)\dpr_\infty/\BF_{\tilde v_i}\dpl\phi(y_i)\dpr\bigr)$ which we denote again by $g$. Then on the basis $(\check u_{i,j})_j$ of $\Koh^1_v(\ulHM(\hat G),A_v)$ from \eqref{EqTateModMPhi} we compute
\begin{eqnarray*}
g(\check u_{i,j})_j & = & g\Bigl(\hat\sigma^{(j,j(\phi))}(\tplus{y_i,\phi(y_i)})^{-\delta_{i,i(\phi)}}\Bigr)_j\\[2mm]
& = & \Bigl(\hat\sigma^{(j,j(\phi))}\bigl(\chi(g)\cdot\tplus{y_i,\phi(y_i)}\bigr)^{-\delta_{i,i(\phi)}}\Bigr)_j \\[2mm]
 & = & \Bigl(\hat\sigma^{j-j(\phi)}\bigl(\chi(g)^{-1}\bigr)^{\delta_{i,i(\phi)}}\cdot\hat\sigma^{(j,j(\phi))}(\tplus{y_i,\phi(y_i)})^{-\delta_{i,i(\phi)}}\Bigr)_j\\[2mm]
 & = & (\hat\sigma^{-j(\phi)}(\chi(g)^{-1})\otimes 1)^{\delta_{i,i(\phi)}}\cdot(\check u_{i,j})_j
\end{eqnarray*}
for the element $\hat\sigma^{-j(\phi)}(\chi(g)^{-1})\otimes 1\in\CO_{E_{v,i}}\otimes_{A_v}R\dbl z\dbr$. That is, the action of $g\in\sG_L$ on $\Koh^1_v(\ulHM(\hat G),A_v)$ coincides with the endomorphism $\hat\sigma^{-j(\phi)}(\chi(g)^{-1})$ and the action of $g\in\CI_L$ on $\Koh^1_v(\ulHM(\hat G_{E_v,\phi}),A_v)=\Koh^1_v(\ulHM_{E_v,\phi},A_v)$ coincides with the endomorphism $\chi_{E_v,\phi}(g)^{-1}\in\CO_{E_v}\mal$. 
\end{Point}

\begin{Proposition}
Let $\ulHM$ have complex multiplication by a commutative, semi-simple $Q_v$-algebra $E_v$ with local CM-type $\Phi=(d_\phi)_{\phi\in H_{E_v}}$. Then the action of $g\in\CI_L$ on $\Koh^1_v(\ulHM,A_v)$ coincides with the endomorphism $\prod_{\phi\in H_{E_v}}\chi_{E_v,\phi}(g)^{-d_\phi}\in\CO_{E_v}\mal$. 
\end{Proposition}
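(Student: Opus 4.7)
The plan is to leverage the tensor product decomposition $\ulHM \cong \ulHM_{E_v,0}\otimes\bigotimes_{\phi\in H_{E_v}}\ulHM_{E_v,\phi}{}^{\otimes d_\phi}$ over $\CO_{E_v,R}$ that was established in Point~\ref{Point4.1}, combined with the Galois computation for a single $\ulHM_{E_v,\phi}$ carried out in Point~\ref{Point4.9}. Since the functor $\ulHM\mapsto\Koh^1_v(\ulHM,A_v)$ is an equivalence to the category of étale $A_v[\sG_L]$-modules (\cite[Theorem~4.17]{HartlKim}) and is compatible with tensor products, it suffices to compute the $\CI_L$-action on each factor separately and multiply.

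First I would treat the unit factor $\ulHM_{E_v,0}$. The explicit basis $(c_{i,j})_{i,j}\in\prod L^\sep\dbl y_i\dbr$ produced in Point~\ref{Point4.2} has the property that its coefficients $c_{i,j,n}$ lie in $\CO_{L^\sep}$ and in fact satisfy $c_{i,0,0}^{\tilde q_i-1}=b_{i,0}^{-1}\in R\mal$ and successive Artin--Schreier-type relations whose coefficients lie in $R$. These equations are separable and étale over $R$, so the extension of $L$ generated by the $c_{i,j,n}$ is unramified. Consequently $\CI_L$ fixes every $c_{i,j}$ and acts trivially on $\Koh^1_v(\ulHM_{E_v,0},A_v)$.

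Next I would invoke Point~\ref{Point4.9} for each $\phi\in H_{E_v}$: a $g\in\CI_L$ acts on the basis $(\check u_{i,j})$ of $\Koh^1_v(\ulHM_{E_v,\phi},A_v)$ from \eqref{EqTateModMPhi} as multiplication by $\chi_{E_v,\phi}(g)^{-1}\in\CO_{E_v}\mal$; this is because the $\CO_{E_v}$-linear action of $g$ on the cyclotomic elements $\tplus{y_i,\phi(y_i)}$ is governed by the character $\chi$ of Proposition~\ref{PropCyclotomic} and local class field theory. Under the tensor-product decomposition of $\Koh^1_v(\ulHM,A_v)$ over $\CO_{E_v}$, where the identification is afforded by the computation in Point~\ref{Point4.4} of the $\tau$-invariants as
\[
\check u\;=\;\Bigl(c_{i,j}\cdot\prod_{\phi\in H_{E_{v,i}}}\hat\sigma^{(j,j(\phi))}(\tplus{y_i,\phi(y_i)})^{-d_\phi}\Bigr)_{i,j}\,,
\]
the $g$-action on $\check u$ is the product of the individual actions, namely multiplication by $\prod_{\phi\in H_{E_v}}\chi_{E_v,\phi}(g)^{-d_\phi}\in\CO_{E_v}\mal$, which is the claim.

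The main technical point to verify carefully is the treatment of the exponents $d_\phi\in\BZ$, since these can be negative: a negative tensor power means taking the $\CO_{E_v}$-linear dual, on which $\CI_L$ acts by the inverse character, and this is compatible with the sign in the exponent $-d_\phi$. Once this bookkeeping is in order, the product formula drops out immediately from multiplicativity of characters on tensor products.
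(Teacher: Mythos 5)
Your proposal is correct and follows essentially the same route as the paper's proof: decompose $\ulHM$ into the tensor factors $\ulHM_{E_v,0}\otimes\bigotimes_\phi\ulHM_{E_v,\phi}^{\otimes d_\phi}$, note from Point~\ref{Point4.2} that the $\tau$-invariant generator of $\ulHM_{E_v,0}$ is defined over an unramified extension so $\CI_L$ acts trivially on that factor, and apply the computation of Point~\ref{Point4.9} giving the action by $\chi_{E_v,\phi}(g)^{-1}$ on each $\ulHM_{E_v,\phi}$. Your additional remark about duals for negative $d_\phi$ is sound bookkeeping and matches how the paper handles the integer exponents via Point~\ref{Point4.4}.
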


\begin{proof}
This follows from the computations in \ref{Point4.4}, \ref{Point4.9} and \ref{Point4.2} by observing that $\CI_L$ acts trivially on $\Koh^1_v(\ulHM_{E_v,0},A_v)$, because its generator $(c_{i,j})_{i,j}$ is defined over the maximal unramified extension of $L$.
\end{proof}

\begin{Point}\label{Point4.12}
To compute the absolute value $\bigl|\int_u\omega\bigr|_v$ we again treat each factor $\ulHM_{E_v,0}$ and $\ulHM_{E_v,\phi}$ of $\ulHM$ separately. We begin with $\ulHM_{E_v,\phi}$ and set $i:=i(\phi)$. Let $\omega^\circ_\psi:=1\in\Koh^\psi(\ulHM_{E_v,\phi},L\dbl y_{i}-\psi(y_{i})\dbr)$. It is a generator as $L\dbl y_{i}-\psi(y_{i})\dbr$-module as in Definition~\ref{DefValuationOfomega}, and is mapped under the period isomorphism of $\ulHM_{E_v,\phi}$ from \eqref{EqPeriodMPhi} to
\begin{equation}\label{EqOmega}
 h_{v,\dR}^{-1}(\omega^\circ_\psi)= (0,\ldots,\Bigl(\bigl(y_{i(\psi)}-\phi(y_{i(\psi)})\bigr)^{\delta_{j(\psi),j(\phi)}}\cdot \hat\sigma^{(j(\psi),j(\phi))}(\tplus{y_{i(\psi)},\phi(y_{i(\psi)})})\Bigr)^{\delta_{i(\psi),i(\phi)}},\ldots,0 )\cdot \check u\,,
\end{equation}
where the non-zero entry is in component $\psi$. We denote this entry by $\Omega(E_v,\phi,\psi)$. It is analogous to Colmez's \cite[Th\'eor\`eme~I.2.1]{Colmez93} element of $\bB_\dR$ with the same name. It satisfies the following
\end{Point}

\begin{Theorem}\label{ThmOmega}
Let $\phi,\psi\in H_{E_v}$ satisfy $i(\phi)=i(\psi)=:i$ and assume that $E_{v,i}$ is separable over $Q_v$. Then the element
\[
\Omega(E_v,\phi,\psi)\;:=\;\bigl(y_{i}-\phi(y_{i})\bigr)^{\delta_{j(\psi),j(\phi)}}\cdot \hat\sigma^{(j(\psi),j(\phi))}(\tplus{y_{i},\phi(y_{i})})\;\in\;\BC_v\dpl y_i-\psi(y_i)\dpr\;=\;\BC_v\dpl z-\zeta\dpr
\]
satisfies
\begin{enumerate}
\item \label{ThmOmega_A}
$\hat v \bigl(\Omega(E_v, \phi, \psi)\bigr) = 1$   if   $\phi = \psi$ and $\hat v \bigl(\Omega(E_v, \phi, \psi)\bigr) = 0$ if   $\phi \neq \psi$.
\item \label{ThmOmega_B}
\[ v\bigl(\Omega(E_v, \phi, \psi)\bigr) \;=\; 
\begin{cases}
                  \tfrac{1}{e_i(\tilde q_i-1)}   - v(\FD_{\psi(E_{v,i})/Q_v}) &    \text{ if}\quad  \phi = \psi, \\[2mm]
                  \tfrac{1}{e_i(\tilde q_i-1)}  + v\big(\psi(y_i)-\phi(y_i)\big) &    \text{ if} \quad \phi \neq \psi\text{ and }j(\phi)=j(\psi),\\[3mm]
                 \tfrac{ q_v^{(j(\psi),j(\phi))}}{e_i(\tilde q_i-1)}  &    \text{ if}  \quad j(\phi)\ne j(\psi)\,,
\end{cases}
\]
where $\FD_{\psi(E_{v,i})/Q_v}$ is the different of $\psi(E_{v,i})$ over $Q_v$.
\item \label{ThmOmega_C}
If $g\in \CI_L$, then $g\bigl(\Omega(E_v, \phi, \psi)\bigr) = \psi\bigl(\chi_{E_v,\phi}(g)\bigr)\cdot \Omega(E_v, \phi, \psi)$. Note that if $L$ is separable over $Q_v$ then $\psi(\chi_{E_v,\phi}(g)\bigr)=\psi\bigl(\phi^{-1}(N_{L/\phi(E_{v,i})}\chi_L(g))\bigr)$.
\item \label{ThmOmega_D}
Let $u\in\Koh_{1,v}(\ulHM_{E_v,\phi},A_v):=\Hom_{A_v}\bigl(\Koh^1_v(\ulHM_{E_v,\phi},A_v),A_v\bigr)$ be a generator as $\CO_{E_v}$-module and let $\omega^\circ_\psi$ be an $L\dbl y_{i}-\psi(y_{i})\dbr$-generator of $\Koh^\psi(\ulHM_{E_v,\phi},L\dbl y_{i}-\psi(y_{i})\dbr)$ subject to the conditions in Definition~\ref{DefValuationOfomega}, that is subject to $\omega^\circ_\psi\mod y_i-\phi(y_i)\in\Koh^1_v(\ulHM_{E_v,\phi},L)$ being an $R$-generator of the free $R$-module of rank one $\Koh^\psi(\ulHM,R)\big/(y_i-\psi(y_i))\Koh^\psi(\ulHM,R)$. Moreover, let $D_\psi$ be a generator as $\psi(\CO_{E_{v,i}})$-module of the different $\FD_{\psi(E_{v,i})/Q_v}$. Then 
\[
\int_u\omega^\circ_\psi\;:=\; u\otimes\id_{\BC_v\dpl z-\zeta\dpr}\bigl(h^{-1}_{v,\dR}(\omega^\circ_\psi)\bigr)\;\in\;\BC_v\dpl z-\zeta\dpr
\]
equals $\Omega(E_v, \phi, \psi)\cdot D_\psi^{-1}$ up to multiplication by an element of $R^\times +(z-\zeta)\cdot L\dbl z-\zeta\dbr$.
\end{enumerate}
\end{Theorem}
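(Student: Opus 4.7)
The plan is to prove all four parts by direct computation from the defining formula for $\Omega(E_v,\phi,\psi)$, using the arithmetic of $\tplus{y_i,\phi(y_i)}$ from~\ref{Point4.3} together with Lemma~\ref{LemFactors}, Corollary~\ref{CorDifferent}, and Proposition~\ref{PropCyclotomic}. The separability hypothesis, applied via Hensel's lemma to~\eqref{EqOER}, identifies $\BC_v\dpl z-\zeta\dpr$ with $\BC_v\dpl y_i-\psi(y_i)\dpr$ on the $\psi$-component, so specialization $y_i\mapsto\psi(y_i)$ is well-defined.

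For~(a), I would show that $\hat\sigma^k(\tplus{y_i,\phi(y_i)})$ is a unit in $\BC_v\dbl y_i-\psi(y_i)\dbr$: the leading coefficient $\tplusminus_0$ satisfies $\tplusminus_0^{\tilde q_i-1}=-\phi(y_i)$, and the valuation $v(\tplusminus_n)=\tilde q_i^{-n}/(e_i(\tilde q_i-1))$ combined with $|\psi(y_i)|_v<1$ forces the constant term $\sum_n\tplusminus_n^{q_v^k}\psi(y_i)^n$ to have $v$-valuation equal to $v(\tplusminus_0^{q_v^k})<\infty$. Hence only the factor $(y_i-\phi(y_i))^{\delta_{j(\psi),j(\phi)}}$ can contribute to $\hat v(\Omega)$, and this contribution is $1$ iff $j(\psi)=j(\phi)$ and $\psi(y_i)=\phi(y_i)$, that is iff $\psi=\phi$. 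For~(b), I would evaluate $(z-\zeta)^{-\hat v(\Omega)}\Omega$ at $y_i=\psi(y_i)$. When $\phi=\psi$, Corollary~\ref{CorDifferent} gives $v\bigl((y_i-\psi(y_i))/(z-\zeta)\bigr|_{y_i=\psi(y_i)}\bigr)=-v(\FD_{\phi(E_{v,i})/Q_v})$, and combining with $v(\tplusminus_0)=1/(e_i(\tilde q_i-1))$ yields the first formula. When $\phi\ne\psi$ with $j(\phi)=j(\psi)$, $\hat v(\Omega)=0$ and the specialization gives $v(\psi(y_i)-\phi(y_i))+1/(e_i(\tilde q_i-1))$. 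When $j(\phi)\ne j(\psi)$, only the leading term $\tplusminus_0^{q_v^{(j(\psi),j(\phi))}}$ contributes, yielding $q_v^{(j(\psi),j(\phi))}/(e_i(\tilde q_i-1))$.

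For~(c), Proposition~\ref{PropCyclotomic} immediately gives $g(\Omega)=\hat\sigma^{(j(\psi),j(\phi))}(\chi(g))\cdot\Omega$. Expanding $\chi(g)=\sum_na_ny_i^n$ with $a_n\in\BF_{\tilde v_i}$, the Frobenius identity $a_n^{q_v^{f_i}}=a_n$ shows that the specialization $\hat\sigma^{(j(\psi),j(\phi))}(\chi(g))|_{y_i=\psi(y_i)}$ coincides with $\psi(\hat\sigma^{-j(\phi)}(\chi(g)))=\psi(\chi_{E_v,\phi}(g))$, and the reformulation via $N_{L/\phi(E_{v,i})}$ is the class-field-theoretic compatibility recalled directly after Proposition~\ref{PropCyclotomic}. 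For~(d), the formula~\eqref{EqPeriodMPhi} shows that, under the decomposition $E_v\otimes_{Q_v}\BC_v\dpl z-\zeta\dpr=\prod_{\psi'}\BC_v\dpl y_i-\psi'(y_i)\dpr$, the $\psi$-component of $h_{v,\dR}(\check u)$ equals $\Omega(E_v,\phi,\psi)^{-1}$, so that $h^{-1}_{v,\dR}(\omega^\circ_\psi)$ is supported only in the $\psi$-slot with value $\Omega(E_v,\phi,\psi)$. Applying the $A_v$-linear functional $u$ and using the trace-dual basis identity $\sum_k\psi'(e_k)\psi(d_k)=\delta_{\psi',\psi}$ produces $\Omega(E_v,\phi,\psi)$ multiplied by a constant in $L^\times$, which the prescribed ambiguities in $\omega^\circ_\psi$ (element of $R^\times+(y_i-\psi(y_i))L\dbl y_i-\psi(y_i)\dbr$) and $u$ (element of $\CO_{E_v}^\times$) absorb into the coset $R^\times+(z-\zeta)L\dbl z-\zeta\dbr$.

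The principal technical obstacle is the three-way case analysis in~(b), where one must carefully juggle Lemma~\ref{LemFactors}, Corollary~\ref{CorDifferent}, and the explicit valuations of the $\tplusminus_n$, making essential use of separability to guarantee that all specializations $y_i=\psi(y_i)$ are well-defined and that the different $\FD_{\phi(E_{v,i})/Q_v}$ is given by the formula of Corollary~\ref{CorDifferent}.
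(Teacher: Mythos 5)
Your strategy matches the paper's proof essentially step for step: direct computation from the defining formula for $\Omega$, the same three-case analysis in~\ref{ThmOmega_B} via the valuations of $\tplusminus_n$ and Corollary~\ref{CorDifferent}, a Galois-action computation in~\ref{ThmOmega_C} via Proposition~\ref{PropCyclotomic} (the paper packages this slightly differently by invoking the $\sG_L$-equivariance of $h_{v,\dR}$ and the known action on $\check u$, but it unpacks to your computation), and the trace pairing of Lemma~\ref{LemmaTrace} in~\ref{ThmOmega_D}.

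One step is stated too quickly: in~\ref{ThmOmega_A}/\ref{ThmOmega_B} you assert that $|\psi(y_i)|_v<1$ together with the formula $v(\tplusminus_n)=\tilde q_i^{-n}/(e_i(\tilde q_i-1))$ ``forces'' $v\bigl(\sum_n\tplusminus_n^{q_v^k}\psi(y_i)^n\bigr)=v(\tplusminus_0^{q_v^k})$, but the terms have $v$-valuation $\tfrac{1}{e_i}\bigl(n+\tfrac{q_v^{k}}{\tilde q_i^{\,n}(\tilde q_i-1)}\bigr)$, which a priori could fail to be minimal at $n=0$ since $q_v^k v(\tplusminus_n)$ is \emph{decreasing} in $n$. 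You need, as the paper records around~\eqref{EqValuationTPlus}, the explicit observation that $0\le (j(\psi),j(\phi))\le f_i-1$ makes the ``$\tplusminus$-piece'' $\tfrac{q_v^{k}}{\tilde q_i^{\,n}(\tilde q_i-1)}$ strictly less than $1$ for all $n\ge 0$, so the valuations are strictly increasing in $n$ and the ultrametric gives equality with the $n=0$ term; only then does nonvanishing of the constant term, and hence $\hat v=0$, follow. With that filled in, the argument is sound.
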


\begin{Remark}
Note that in contrast to the number field case \cite[Th\'eor\`eme~I.2.1]{Colmez93} the element $\Omega(E_v,\phi,\psi)\in\BC_v\dpl z-\zeta\dpr$ is by \ref{ThmOmega_A}, \ref{ThmOmega_B} and \ref{ThmOmega_C} uniquely determined only up to multiplication by an element of $\CO_{\wt L}^\times +(z-\zeta)\cdot \wt L\dbl z-\zeta\dbr$, where $\wt L$ is the completion of the compositum of $\BF_q^\alg$ with the perfect closure of $L$ in $Q_v^\alg$, because the fixed field of $\CI_L$ in $\BC_v\dpl z-\zeta\dpr$ equals $\wt L\dpl z-\zeta\dpr$ by the Ax-Sen-Tate theorem \cite{Ax70}.
\end{Remark}

\begin{proof}[Proof of Theorem~\ref{ThmOmega}]
In \ref{Point4.3} we have seen that the coefficients of the series $\tplus{y_{i},\phi(y_{i})}=\sum_{n=0}^\infty\tplusminus_n y_i^n$ satisfy $v(\tplusminus_n)=v(\phi(y_{i}))\cdot{\tilde q_i^{-n}/(\tilde q_i-1)}$. From $v(\psi(y_i))=1/e_i=v(\phi(y_i))$ it follows that the evaluation of $\hat\sigma^{(j(\psi),j(\phi))}(\tplus{y_{i},\phi(y_{i})})\big|_{y_i=\psi(y_i)}=\sum_{n=0}^\infty\tplusminus_n^{q_v^{(j(\psi),j(\phi))}}\psi(y_i)^n$ at $y_i=\psi(y_i)$ satisfies 
\begin{equation}\label{EqValuationTPlus}
v(\tplusminus_n^{q_v^{(j(\psi),j(\phi))}}\psi(y_i)^n) \; = \; \tfrac{1}{e_i}\cdot\Bigl(n+\tfrac{q_v^{(j(\psi),j(\phi))}}{\tilde q_i^{n}(\tilde q_i-1)}\Bigr).
\end{equation}
Since $0\le(j(\psi),j(\phi))\le f_i-1$ the second fraction in the parenthesis is strictly smaller than $1$, and so the valuations in \eqref{EqValuationTPlus} are strictly increasing with $n$ and attain their minimum $\tfrac{q_v^{(j(\psi),j(\phi))}}{e_i(\tilde q_i-1)}$ for $n=0$. This shows that $\hat\sigma^{(j(\psi),j(\phi))}(\tplus{y_{i},\phi(y_{i})})\big|_{y_i=\psi(y_i)}$ is non-zero in $L$ and 
\[
v\Bigl(\hat\sigma^{(j(\psi),j(\phi))}(\tplus{y_{i},\phi(y_{i})})\big|_{y_i=\psi(y_i)}\Bigr)\;=\; \tfrac{q_v^{(j(\psi),j(\phi))}}{e_i(\tilde q_i-1)}\,.
\]
In particular the valuation $\hat v\bigl(\hat\sigma^{(j(\psi),j(\phi))}(\tplus{y_{i},\phi(y_{i})})\bigr)=0$. 

\medskip\noindent
\ref{ThmOmega_A} Lemma~\ref{LemDerivative} implies that in the $\psi$-component of $E_v\otimes_{Q_v}L\dbl z-\zeta\dbr$ we have $\ord_{y_i-\psi(y_i)}=\ord_{z-\zeta}$. If $j(\phi)=j(\psi)$, that is $\phi|_{\BF_{\tilde v_i}}=\psi|_{\BF_{\tilde v_i}}$ then $\phi\ne\psi$ implies $\psi(y_i)-\phi(y_i)\ne0$ in $L$, because $E_{v,i}=\BF_{\tilde v_i}\dpl y_i\dpr$. Therefore the valuation $\hat v$ of $y_i-\phi(y_i)=(\psi(y_i)-\phi(y_i))+(y_i-\psi(y_i))$ equals zero for $\phi\ne\psi$ and $j(\phi)=j(\psi)$. This implies \ref{ThmOmega_A}.

\medskip\noindent
\ref{ThmOmega_B} We will calculate  $v\bigl((\Omega(E_v, \phi, \psi)\bigr)$ in three different cases separately as follows.

\medskip\noindent
Case1: $\psi = \phi$. In this case  $\hat v\bigl(\Omega(E_v, \phi, \psi)\bigr) = 1$ and so 
\begin{eqnarray*}
v\bigl(\Omega(E_v, \phi, \psi)\bigr) & = & v\bigl(\Bigl(\tfrac{y_i-\phi(y_i)}{z-\zeta}\cdot \tplus{y_i,\phi(y_i)}\Bigr)\Big|_{y_i =\phi(y_i)}\bigr) \\[2mm]
& = & v\bigl(\tfrac{y_i-\phi(y_i)}{z-\zeta}\big|_{y_i =\phi(y_i)}\bigr) + v\bigl(\tplus{y_{i},\phi(y_{i})}|_{y_i=\psi(y_i)}\bigr)\\[2mm]
& = & - v(\FD_{\psi(E_{v,i})/Q_v}) + \tfrac{1}{e_i(\tilde q_i-1)}
\end{eqnarray*}
by Corollary~\ref{CorDifferent}.

\medskip\noindent
Case 2: $\psi \neq \phi$ and $j(\psi) = j(\phi)$. In this case  $\hat v\bigl(\Omega(E_v, \phi, \psi)\bigr) = 0$ and so 
\begin{eqnarray*}
v\bigl(\Omega(E_v, \phi, \psi)\bigr) & = & v\Bigl(\bigl((y_i-\phi(y_i))\cdot \tplus{y_i,\phi(y_i)}\bigr)\big|_{y_i =\psi(y_i)}\Bigr) \\[2mm]
& = & v\bigl(\psi(y_i)-\phi(y_i)\bigr) + v\bigl(\tplus{y_{i},\phi(y_{i})}|_{y_i=\psi(y_i)}\bigr)\\[2mm]
& = & v\bigl(\psi(y_i)-\phi(y_i)\bigr) + \tfrac{1}{e_i(\tilde q_i-1)}\,.
\end{eqnarray*}

\medskip\noindent
Case 3: $j(\psi)\neq j(\phi)$. In this case  $\hat v\bigl(\Omega(E_v, \phi, \psi)\bigr) = 0$ and so 
\[
v\bigl(\Omega(E_v, \phi, \psi)\bigr) \;=\; v\Bigl(\hat\sigma^{(j(\psi),j(\phi))}(\tplus{y_{i},\phi(y_{i})})|_{y_i=\psi(y_i)}\Bigr)\;=\; \tfrac{q_v^{(j(\psi),j(\phi))}}{e_i(\tilde q_i-1)}\,.
\]

\medskip\noindent
\ref{ThmOmega_C} For the $\CO_{E_v}$-basis $\check u$ of $\Koh^1_v(\ulHM_{E_v,\phi},A_v)$ from \eqref{EqTateModMPhi} we have seen in \ref{Point4.9} that $g(\check u)=\chi_{E_v,\phi}(g)^{-1}\cdot\check u$ and $g\bigl(0,\ldots,\Omega(E_v,\phi,\psi),\ldots,0\bigr)\cdot g(\check u)=h_{v,\dR}^{-1}(g(\omega^\circ_\psi))$ $=h_{v,\dR}^{-1}(\omega^\circ_\psi)=\bigl(0,\ldots,\Omega(E_v,\phi,\psi),\ldots,0\bigr)\cdot \check u$. Thus $g$ acts on the coefficient $\bigl(0,\ldots,\Omega(E_v,\phi,\psi),\ldots,0\bigr)$ as multiplication with $\chi_{E_v,\phi}(g)$ and on its $\psi$-component $\Omega(E_v,\phi,\psi)$ by multiplication with $\psi(\chi_{E_v,\phi}(g))$.

\medskip\noindent
\ref{ThmOmega_D} Again we consider the $\CO_{E_v}$-basis $\check u$ of $\Koh^1_v(\ulHM_{E_v,\phi},A_v)$ from \eqref{EqTateModMPhi}. Let $D=(D_i)_i\in\CO_{E_v}=\prod_i\CO_{E_{v,i}}$ be a generator of the different $\FD_{E_v/Q_v}=\prod_i\FD_{E_{v,i}/Q_v}=D\cdot\CO_{E_v}$ and let $c=(c_i)_i\in\CO_{E_v}\mal=\prod_i\CO_{E_{v,i}}\mal$ be the element(s) from Lemma~\ref{LemmaTrace} below for which the pairing $\langle\,.\,,\,.\,\rangle\colon\Koh_{1,v}(\ulHM_{E_v,\phi},A_v)\times\Koh_v^1(\ulHM_{E_v,\phi},A_v)\to A_v$ takes the value $\langle a\,u\,,\,b\,\check u\rangle=\Tr_{E_v/Q_v}(abcD^{-1})$ for $a,b\in\CO_{E_v}$. If $\omega^\circ_\psi=1$ is the generator from \ref{Point4.12} then
\[
{\TS\int_u}\omega^\circ_\psi\;=\;\Tr_{E_v/Q_v}\bigl(0,\ldots,\Omega(E_v,\phi,\psi)\cdot\psi(cD^{-1}),\ldots,0\bigr)\;=\;\Omega(E_v,\phi,\psi)\cdot\psi(c_iD_i^{-1})\,.
\]
Any other generator $\omega^\circ_\psi$ differs from $\omega^\circ_\psi=1$ by multiplication by an element 
\[
x\;\in\; R^\times +(y_i-\phi(y_i))\cdot L\dbl y_i-\phi(y_i)\dbr\subset E_v\otimes_{Q_v}L\dbl y_i-\phi(y_i)\dbr\,. 
\]
Under the pairing $\langle\,.\,,\,.\,\rangle$ this leads to $\int_u\omega^\circ_\psi=\Omega(E_v,\phi,\psi)\cdot \psi(c_ixD_i^{-1})$ with $\psi(D_i)=D_\psi$ and
\[
\psi(c_ix)\;\in\; R^\times +(y_i-\phi(y_i))\cdot L\dbl y_i-\phi(y_i)\dbr\;=\; R^\times +(z-\zeta)\cdot L\dbl z-\zeta\dbr\,.
\]
\end{proof}

It remains to record the following well known

\begin{Lemma}\label{LemmaTrace}
If $E_{v,i}/Q_v$ is separable let $D_i\in\CO_{E_{v,i}}$ be a generator of the different $\FD_{E_{v,i}/Q_v}=D_i\cdot\CO_{E_{v,i}}$. Then for any perfect pairing $\langle\,.\,,\,.\,\rangle\colon \CO_{E_{v,i}}\times \CO_{E_{v,i}} \to A_v$ satisfying $\langle a,b\rangle = \langle ab,1\rangle = \langle 1, ab\rangle$, there is an element $c_i \in \CO_{E_{v,i}}\mal$ with $\langle a,b\rangle = \Tr_{E_{v,i}/Q_v}(abc_iD_i^{-1})$.
\end{Lemma}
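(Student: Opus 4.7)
The plan is to reduce the pairing to a single $A_v$-linear functional on $\CO_{E_{v,i}}$, represent that functional in trace form using the separability of $E_{v,i}/Q_v$, and then pin down the representing scalar as a unit using the perfectness hypothesis together with the discrete-valuation-ring structure of $\CO_{E_{v,i}}$.

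Concretely, I would set $\ell\colon\CO_{E_{v,i}}\to A_v$, $\ell(x):=\langle x,1\rangle$. The associativity $\langle a,b\rangle=\langle ab,1\rangle$ immediately gives $\langle a,b\rangle=\ell(ab)$, so the statement reduces to producing $c\in\CO_{E_{v,i}}\mal$ with $\ell(x)=\Tr_{E_{v,i}/Q_v}(xc)$ for every $x\in\CO_{E_{v,i}}$. Tensoring with $Q_v$ extends $\ell$ uniquely to a $Q_v$-linear functional $E_{v,i}\to Q_v$. Since $E_{v,i}/Q_v$ is finite separable, the trace bilinear form $(x,y)\mapsto\Tr_{E_{v,i}/Q_v}(xy)$ is a non-degenerate $Q_v$-valued pairing on $E_{v,i}$, and standard linear duality yields a unique $c\in E_{v,i}$ with $\ell(x)=\Tr_{E_{v,i}/Q_v}(xc)$ for all $x\in E_{v,i}$.

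The main step is then to verify that this $c$ lies in $\CO_{E_{v,i}}\mal$. The mere integrality $\ell(\CO_{E_{v,i}})\subset A_v$ only places $c$ in the inverse different $\FD^{-1}_{E_{v,i}/Q_v}$, via the standard identification $\Hom_{A_v}(\CO_{E_{v,i}},A_v)\cong\FD^{-1}_{E_{v,i}/Q_v}$ induced by the trace. To improve this I would invoke perfectness: it asserts that the adjoint map
\[
\CO_{E_{v,i}}\;\longto\;\Hom_{A_v}(\CO_{E_{v,i}},A_v),\qquad a\;\longmapsto\;\Tr_{E_{v,i}/Q_v}(ac\,\cdot\,)\,,
\]
is an isomorphism of $\CO_{E_{v,i}}$-modules. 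Under the trace identification of the target with $\FD^{-1}_{E_{v,i}/Q_v}$, this map becomes multiplication by $c$, so $c$ must be an $\CO_{E_{v,i}}$-generator of $\FD^{-1}_{E_{v,i}/Q_v}$; normalizing by a fixed generator of that rank-one module then translates this into the required statement $c\in\CO_{E_{v,i}}\mal$.

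The delicate point, and the step I expect to require the most care, is this last normalization: controlling precisely how the perfectness and the integrality interact with the different, so that the scalar produced by the trace duality on $E_{v,i}$ is actually a unit of $\CO_{E_{v,i}}$ rather than merely an element of $\FD^{-1}_{E_{v,i}/Q_v}$.
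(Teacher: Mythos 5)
Your reduction to the functional $\ell(x)=\langle x,1\rangle$ and the use of trace duality over $E_{v,i}$ are both correct, and you have in fact derived the precise consequence of perfectness: the representing element $c$ must be an $\CO_{E_{v,i}}$-generator of the inverse different $\FD^{-1}_{E_{v,i}/Q_v}$. That is the right intermediate statement. (The paper offers no argument to compare against: the lemma is recorded as well known, with a terminal qed and no proof.)

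The final step you flag as delicate is not a delicacy but an actual obstruction, and cannot be closed. Having $c$ generate $\FD^{-1}_{E_{v,i}/Q_v}$ is strictly weaker than $c\in\CO_{E_{v,i}}\mal$: writing $\FD^{-1}_{E_{v,i}/Q_v}=\pi^{-d}\CO_{E_{v,i}}$ for a uniformizer $\pi$ of $\CO_{E_{v,i}}$ and $d=v\bigl(\FD_{E_{v,i}/Q_v}\bigr)$, a generator of $\FD^{-1}_{E_{v,i}/Q_v}$ has valuation $-d$ whereas a unit has valuation $0$; these match only when $d=0$, i.e.\ when $E_{v,i}/Q_v$ is unramified. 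Renormalizing by a fixed generator of $\FD^{-1}_{E_{v,i}/Q_v}$ does not help, because the lemma's conclusion pins down the element occurring inside $\Tr_{E_{v,i}/Q_v}(ab\,\cdot)$, not an abstract isomorphism class of a rank-one module. Conversely, in the separable ramified case no $c\in\CO_{E_{v,i}}\mal$ can make $\langle a,b\rangle=\Tr_{E_{v,i}/Q_v}(abc)$ perfect: the adjoint map $\CO_{E_{v,i}}\to\Hom_{A_v}(\CO_{E_{v,i}},A_v)\cong\FD^{-1}_{E_{v,i}/Q_v}$ would then have image $c\,\CO_{E_{v,i}}=\CO_{E_{v,i}}\subsetneq\FD^{-1}_{E_{v,i}/Q_v}$ and so fail to be surjective. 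What your argument actually proves, and what is optimal under separability alone, is that $c$ is an $\CO_{E_{v,i}}$-generator of $\FD^{-1}_{E_{v,i}/Q_v}$; the unit conclusion of the lemma requires the additional hypothesis that $E_{v,i}/Q_v$ be unramified.
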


\begin{proof}
The set of bilinear forms $\CO_{E_{v,i}} \times \CO_{E_{v,i}} \to A_v$ equals $\Hom_{A_v}(\CO_{E_{v,i}} \otimes_{A_v} \CO_{E_{v,i}}, A_v)$ and the condition $\langle a,b\rangle = \langle ab,1\rangle = \langle 1, ab\rangle$ implies that $\langle\,.\,,\,.\,\rangle$ lies in $\Hom_{A_v}(\CO_{E_{v,i}} \otimes_{\CO_{E_{v,i}}} \CO_{E_{v,i}}, A_v) = \Hom_{A_v}(\CO_{E_{v,i}}, A_v)$. The condition that $\langle\,.\,,\,.\,\rangle$ is perfect implies that 
\begin{equation}\label{EqLemmaTrace1}
\CO_{E_{v,i}}\;\isoto\;\Hom_{A_v}(\CO_{E_{v,i}}, A_v),\quad a\;\longmapsto\;[\,b\mapsto \langle a, b\rangle\,]
\end{equation}
is an isomorphism of $\CO_{E_{v,i}}$-modules. On the other hand, by definition of the different in \cite[\S\,III.3]{SerreLF}, there are also isomorphisms of $\CO_{E_{v,i}}$-modules 
\[
\FD_{E_{v,i}/Q_v}^{-1}\;=\;D_i^{-1}\cdot\CO_{E_{v,i}}\;\isoto\;\Hom_{A_v}(\CO_{E_{v,i}}, A_v),\quad \tilde aD_i^{-1}\;\longmapsto\;[\,b\mapsto \Tr_{E_{v,i}/Q_v}(\tilde abD_i^{-1})\,]
\]
for $\tilde a\in\CO_{E_{v,i}}$, and 
\begin{equation}\label{EqLemmaTrace2}
\CO_{E_{v,i}}\;\isoto\;\Hom_{A_v}(\CO_{E_{v,i}}, A_v),\quad \tilde a\;\longmapsto\;[\,b\mapsto \Tr_{E_{v,i}/Q_v}(\tilde abD_i^{-1})\,]\,.
\end{equation}
Comparing \eqref{EqLemmaTrace1} and \eqref{EqLemmaTrace2} yields a unit $c_i\in\CO_{E_{v,i}}\mal$ with $\tilde a=c_ia$ and $\langle a,b\rangle = \Tr_{E_{v,i}/Q_v}(abc_iD_i^{-1})$.
\end{proof}

\begin{Point}
Analogously to \cite[\S\,I.2]{Colmez93}, we can give a uniform formula for $v\bigl(\Omega(E_v, \phi, \psi)\bigr)$ by introducing certain measures on $\sG_{Q_v}$. Let $\CC(\sG_{Q_v}, \BQ)$ be the $\BQ$-vector space of locally constant functions $a\colon\sG_{Q_v}\to\BQ$. If $K$ is a finite separable extension of $Q_v$, and $\phi, \psi \in H_K$, let $a_{K, \phi, \psi} \in \CC(\sG_{Q_v}, \BQ) $  be the function given by 
\[ a_{K, \phi, \psi}(g) \;:=\;
    \begin{cases}
            1 &    \text{if}\quad  g\phi = \psi,\\
           0 &         \text{otherwise}.
    \end{cases} 
\]
Note that the $a_{K, \phi, \psi}$ span $\CC(\sG_{Q_v}, \BQ)$.

If $L\subset Q_v^\sep$ is a finite Galois extension of $Q_v$, let $\mu_L \in \CC(\sG_{Q_v}, \BQ)$ be the function given by the formula 
\[ \mu_L(g) \;:=\;
    \begin{cases}
            0 &    \text{if}\quad  g\notin \CI_{Q_v},\\
         - v(g(\pi_L)-\pi_L) &  \text{if}\quad  g\in \CI_{Q_v} \ \text{and}\ g(\pi_L)\neq \pi_L      ,\\
          v(\FD_{L/Q_v})&  \text{if}\quad  g\in \CI_{Q_v} \ \text{and}\ g(\pi_L)=  \pi_L,
    \end{cases} 
\]
where $\pi_L$ is a uniformizer of $L$ and $\FD_{L/Q_v}$ is the different of $L$ over $Q_v$.

Moreover, we let $e_K$ be the ramification index of $K$ over $Q_v$ and $f_K$ the degree of the residue field of $K$ over $\BF_v$. We let $W_L^n:=\bigl\{g\in\sG_{Q_v}\colon g(x)\equiv x^{q_v^n}\mod\Fm_{Q_v^\alg}\es\forall\;x\in\CO_{Q_v^\alg}\bigr\}/\CI_L$. It is in bijection with $\{g\in\Gal(L/Q_v)\colon g(x)\equiv x^{q_v^n}\mod(\pi_L)\}$ under the map $\sG_{Q_v}\onto\Gal(L/Q_v)$.
\end{Point}

\begin{Lemma}\label{LemmaIndL} Let $K,L\subset Q_v^\sep$ be finite separable extensions of $Q_v$ with $L$ finite Galois over $Q_v$ containing all the conjugates of $K$, and let $\phi, \psi \in H_K$. The function $a_{K, \phi, \psi}$ is constant modulo  $\sG_L$ and hence can be considered  as a function on $G_L:=\Gal(L/Q_v)$. Then 
\[ \sum_{g\in G_L}a_{K, \phi, \psi}(g)\cdot\mu_L(g) \;=\;
    \begin{cases}
            0 &    \text{if}\quad  j(\phi) \neq j(\psi),\\
          v(\FD_{\psi (K)/Q_v}) &  \text{if}\quad  \phi = \psi      ,\\
          -v\bigl(\psi(\pi_K)- \phi(\pi_K)\bigr)&  \text{if}\quad   j(\phi) = j(\psi) \ \text{and}\ \phi \neq \psi,
    \end{cases} 
\]
and 
\[
\frac{1}{e_L}\sum_{n=1}^{\infty}\sum_{g \in W^n_L}\frac{a_{K, \phi, \psi}(g)}{ q_v^{ns}} \;=\; \frac{1}{e_K}\;\frac{ q_v^{(j(\psi),j(\phi))s}}{ q_v^{f_K s}-1}.
\]
In particular, the left hand side of both equations does not depend on the choice of $L$. 
\end{Lemma}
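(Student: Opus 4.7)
I would prove each formula by a direct case analysis, exploiting the observation that $a_{K,\phi,\psi}$ is supported on the coset $C := \{g \in G_L : g\phi = \psi\}$ of $\Gal(L/\phi(K))$ in $G_L = \Gal(L/Q_v)$.

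For the first formula three cases arise. If $j(\phi) \ne j(\psi)$, every $g \in C$ restricts to the nontrivial power $\Frob^{j(\psi)-j(\phi)}$ on the residue field of $\phi(K)$, hence $g \notin \CI_L$ and $\mu_L(g) = 0$, giving the sum zero. If $\phi = \psi$, we have $C = \Gal(L/\phi(K))$; separating off $g = 1$ (which contributes $v(\FD_{L/Q_v})$) from the nontrivial elements of $C \cap \CI_L$ and applying the classical formula
\[
v\bigl(\FD_{L/\phi(K)}\bigr) \;=\; \sum_{\substack{g \in \Gal(L/\phi(K)) \cap \CI_L \\ g \ne 1}} v\bigl(g(\pi_L) - \pi_L\bigr)
\]
(which rests on the fact that $\pi_L$ generates $\CO_L$ over $\CO_{\phi(K) \cdot L^{\un}}$ together with the vanishing of the unramified contribution to the different), combined with the multiplicativity $v(\FD_{L/Q_v}) = v(\FD_{L/\phi(K)}) + v(\FD_{\phi(K)/Q_v})$, yields the desired value.

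The case $\phi \ne \psi$, $j(\phi) = j(\psi)$ is where I expect the main difficulty. Since $\psi\phi^{-1}$ fixes the residue field of $\phi(K)$, one can choose $g_0 \in \CI_L$ with $g_0\phi = \psi$; normality of $\CI_L$ in $G_L$ then gives $C \cap \CI_L = g_0(\Gal(L/\phi(K)) \cap \CI_L)$, none of whose elements is the identity (otherwise $\psi = \phi$). Using $G_L$-invariance of $v$, the sum becomes
\[
-\sum_{h \in \Gal(L/\phi(K)) \cap \CI_L} v\bigl(h(\pi_L) - g_0^{-1}(\pi_L)\bigr).
\]
The hard part is identifying this with $-v(\psi(\pi_K) - \phi(\pi_K))$: the plan is to recognize the product $\prod_h(h(\pi_L) - g_0^{-1}(\pi_L))$ as the value at $g_0^{-1}(\pi_L)$ of the Eisenstein minimal polynomial of $\pi_L$ over $\phi(K) \cdot L^{\un}$, then compare with the factorization of the minimal polynomial $\prod_{\tilde\phi \in H_K}(X - \tilde\phi(\pi_K))$ of $\pi_K$ over $Q_v$ evaluated at $X = \psi(\pi_K)$, with the unramified factors contributing trivial units and the relation $g_0(\phi(\pi_K)) = \psi(\pi_K)$ delivering the right factor.

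For the second formula I would count $|W_L^n \cap C|$ directly. An element $g \in W_L^n$ acts on $\kappa_L$ as $\Frob^n$, so $g\phi = \psi$ forces the congruence $n + j(\phi) \equiv j(\psi) \pmod{f_K}$ on the residue field; when it holds the intersection is a single coset of $\Gal(L/\phi(K)) \cap \CI_L$, of cardinality $e_L/e_K$, and otherwise it is empty. Summing the resulting geometric series over admissible $n \ge 1$ and dividing by $e_L$ yields the claimed closed form, with $q_v^{(j(\psi),j(\phi))s}$ arising from the smallest admissible $n$ and the denominator $q_v^{f_K s}-1$ from the common ratio $q_v^{-f_K s}$. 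Both final expressions manifestly depend only on $K, \phi, \psi$, giving the independence of the auxiliary field $L$.
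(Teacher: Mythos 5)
Your overall strategy is the same one the paper invokes (it defers to Colmez's Lemma~I.2.4 without repeating the argument), and the outline is sound: localize $a_{K,\phi,\psi}$ on the coset $C=\{g:g\phi=\psi\}$, split into the three residue--field cases for the first formula, and for the second formula count $|W_L^n\cap C|$ and sum the geometric series. The first two cases of the first formula are clean as you describe them; for the case $\phi\neq\psi$, $j(\phi)=j(\psi)$ you honestly name it as the hard step, and your plan --- identify $\prod_{h}\bigl(g_0^{-1}(\pi_L)-h(\pi_L)\bigr)$ as the value at $g_0^{-1}(\pi_L)$ of the Eisenstein polynomial of $\pi_L$ over $\phi(K)\cdot L^{\un}$ and compare with $\prod_{\tilde\phi}\bigl(\psi(\pi_K)-\tilde\phi(\pi_K)\bigr)$ via the multiplicativity of the different --- is the correct route; it is not fully executed but it is the argument Colmez makes.

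On the second formula there is a slip you should look at carefully. You correctly find that the admissible $n$ are those with $n\equiv j(\psi)-j(\phi)\pmod{f_K}$, that each nonempty $W_L^n\cap C$ has size $e_L/e_K$, and that the smallest admissible $n\geq1$ is $n_0=(j(\psi),j(\phi))$ (or $f_K$ if this vanishes). But then $\frac{1}{e_K}\sum_{k\geq0}q_v^{-(n_0+kf_K)s}=\frac{1}{e_K}\cdot\frac{q_v^{-n_0 s}}{1-q_v^{-f_Ks}}=\frac{1}{e_K}\cdot\frac{q_v^{(f_K-n_0)s}}{q_v^{f_Ks}-1}$, and for $j(\phi)\neq j(\psi)$ one has $f_K-n_0=(j(\phi),j(\psi))$, not $(j(\psi),j(\phi))$ as you write. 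So ``the smallest admissible $n$'' does not produce the stated numerator exponent; it produces its complement. Tracking this through the intended application is instructive: Theorem~\ref{Lubintateperiod} uses the Lemma with $a_{E_v,\psi,\phi}(g)=\delta_{g\psi,\phi}$, and needs $Z_v(a_{E_v,\psi,\phi},1)=\frac{q_v^{(j(\psi),j(\phi))}}{e_i(\tilde q_i-1)}$ to agree with Theorem~\ref{ThmOmega}\ref{ThmOmega_B}; plugging $(\alpha,\beta)=(\psi,\phi)$ into the correctly-computed closed form gives exactly that, whereas the exponent $(j(\psi),j(\phi))$ displayed in the Lemma (applied in the same way) would give $(j(\phi),j(\psi))$ and break the consistency. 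In short: your counting and geometric-series setup are right, but your assertion about where $q_v^{(j(\psi),j(\phi))s}$ ``arises from'' is wrong, and the compensating transposition should be fixed (in your own write-up, and, it appears, also in the displayed exponent of the Lemma statement, so as to match Theorems~\ref{ThmOmega} and~\ref{Lubintateperiod}).

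One further small point worth making explicit in a complete write-up: for $g\in C\cap\CI_L$, $g\neq1$, you apply the classical formula $v(\FD_{L/\phi(K)})=\sum_{h\neq1}v(h(\pi_L)-\pi_L)$ with the sum over the inertia subgroup of $\Gal(L/\phi(K))$; this is valid because $\pi_L$ generates $\CO_L$ over the maximal unramified subextension $\phi(K)\cdot L^{\un}$, and because the unramified piece $\FD_{\phi(K)\cdot L^{\un}/\phi(K)}$ is trivial. You say this, but it deserves a line.
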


\begin{proof} The proof follows in the same way as  \cite[Lemma I.2.4]{Colmez93}.
\end{proof}

Since the $a_{K, \phi, \psi}$ generate the vector space  $\CC(\sG_{Q_v}, \BQ)$, we get the following proposition.

\begin{Proposition}\label{PropArtinMeasure}
There exist $\BQ$-linear homomorphisms $Z_v(\,.\,,s) : \CC(\sG_{Q_v}, \BQ) \to\BC$ if $s \in\BC$ and $\mu_{\Art,v}: \CC(\sG_{Q_v}, \BQ) \to \BQ$ defined by the following formulas: if $a \in \CC(\sG_{Q_v}, \BQ)$ and if $L \subset Q_v^\sep$ is a finite Galois extension of $Q_v$ such that $a$ is constant modulo $\sG_L$, then
\[
\mu_{\Art,v}(a) \;=\; \sum_{g\in G_L}a(g)\cdot\mu_L(g)
\]
with $G_L:=\Gal(L/Q_v)$, and $Z_v(a,s)$ is obtained by meromorphic extension from the following formula,  valid for $\CR e(s)>0$:
\[
\hspace{5.3cm}Z_v(a,s) \;=\; \frac{1}{e_L}\sum_{n=1}^{\infty}\sum_{g \in W^n_L}\frac{a(g)}{ q_v^{ns}}\hspace{5.3cm}\qed
\]
\end{Proposition}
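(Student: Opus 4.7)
The proof is a straightforward linear extension argument based on Lemma~\ref{LemmaIndL}. My plan is as follows.

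First, observe that every $a \in \CC(\sG_{Q_v},\BQ)$ is locally constant, hence factors through a finite quotient $G_L := \Gal(L/Q_v)$ for some finite Galois extension $L/Q_v$. Fix such an $L$; then $a$ lies in the finite-dimensional subspace of $\CC(\sG_{Q_v},\BQ)$ consisting of functions that factor through $G_L$. Taking $K = L$ in the definition of $a_{K,\phi,\psi}$, the functions $a_{L,\phi,\psi}$ for $\phi,\psi \in H_L$ are the characteristic functions of the cosets $\{\psi\phi^{-1}\}$ in $G_L$, hence span this subspace over~$\BQ$.

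The formulas defining $\mu_{\Art,v}(a)$ and $Z_v(a,s)$ in the proposition are manifestly $\BQ$-linear in $a$ for fixed $L$. So both give well-defined values once we verify they do not depend on the choice of $L$ (among those through which $a$ factors). By linearity it suffices to verify this independence on the spanning set $\{a_{K,\phi,\psi}\}$, and this is precisely the content of Lemma~\ref{LemmaIndL}: the sums $\sum_{g\in G_L}a_{K,\phi,\psi}(g)\mu_L(g)$ and $\tfrac{1}{e_L}\sum_n\sum_{g\in W^n_L}\tfrac{a_{K,\phi,\psi}(g)}{q_v^{ns}}$ evaluate to expressions depending only on $K,\phi,\psi$ (namely the piecewise formula involving $v(\FD_{\psi(K)/Q_v})$ and $v(\psi(\pi_K)-\phi(\pi_K))$ in the first case, and $\tfrac{1}{e_K}\cdot\tfrac{q_v^{(j(\psi),j(\phi))s}}{q_v^{f_Ks}-1}$ in the second), with no reference to $L$.

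Thus I would set $\mu_{\Art,v}(a_{K,\phi,\psi})$ and $Z_v(a_{K,\phi,\psi},s)$ equal to these intrinsic closed-form expressions on the spanning set, extend $\BQ$-linearly, and then observe that the resulting maps satisfy the formulas of the proposition for every valid choice of $L$ (by the equality of the two sides already verified on the spanning set). The remaining issue is the meromorphic extension of $s \mapsto Z_v(a,s)$. For each basis element, $\tfrac{1}{e_K}\cdot\tfrac{q_v^{(j(\psi),j(\phi))s}}{q_v^{f_Ks}-1}$ is manifestly meromorphic on all of~$\BC$ (with poles only where $q_v^{f_Ks}=1$), and a finite $\BQ$-linear combination of such functions is again meromorphic. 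The main potential obstacle---checking that the series $\tfrac{1}{e_L}\sum_n\sum_{g\in W^n_L}\tfrac{a(g)}{q_v^{ns}}$ converges for $\CR e(s)>0$ and extends meromorphically---is already settled by Lemma~\ref{LemmaIndL}, which evaluates this series in closed form on the spanning set. No further analytic input is required.
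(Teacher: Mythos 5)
Your proposal is correct and matches the paper's (implicit) argument: the proposition is stated with a \qed because the paper's only justification is the remark immediately preceding it, namely that the functions $a_{K,\phi,\psi}$ span $\CC(\sG_{Q_v},\BQ)$ so that Lemma~\ref{LemmaIndL} gives both the $L$-independence and the closed (hence meromorphic) form. Your additional observations---taking $K=L$ to see the $a_{L,\phi,\psi}$ are indicator functions of singletons of $G_L$, and that the right-hand closed forms are rational in $q_v^{-s}$---fill in exactly the details the paper leaves tacit.
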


\begin{Remark} If $V$ is a finite dimensional $\BC$-vector space, $\rho\colon \sG_{Q_v} \to \Aut_\BC(V)$ is a continuous complex representation of  $\sG_{Q_v}$, and if $\chi \in \CC^0(\sG_{Q_v}, \BQ) \otimes_\BQ\BC$ is the character of $\rho$, then $\mu_{\Art,v}(\chi)$ is nothing else than the degree at $v$ of the conductor $\Ff_\chi$ of $\chi$; cf.\ \cite[Chapter~VI, \S\,2]{SerreLF}, where $\mu_{\Art,v}(\chi)$ is denoted by $f(\chi)$. And if $W$ is the sub vector space of $V$ stable by $I_{Q_v}$, we have
\[
Z_v(\chi,a) \log q_v \;=\; -\frac{d}{ds} \log \Bigl( \det(1 -  q_v^{-s}\rho(\Frob_{L/Q_v})|_W)^{-1}\Bigr)
\]
by \cite[Chapter~0, \S\,4]{Tate84} or \cite[Lemma~9.14]{Rosen02}. So the linear maps $\mu_{\Art,v}$ and $Z_v(\,.\,,s)$ coincide with the maps with the same names in Definition~\ref{DefArtinMeasure} in the introduction.
\end{Remark}

As a direct consequence of Theorem~\ref{ThmOmega}, Proposition~\ref{PropArtinMeasure} and Lemma~\ref{LemmaIndL} we get the following 

\begin{Theorem}\label{Lubintateperiod}
If $\phi,\psi\in H_{E_v}$ satisfy $i(\phi)=i(\psi)=:i$ and $E_{v,i}$ is separable over $Q_v$ then 
\[
v\bigl(\Omega(E_v, \phi, \psi)\bigr) \;=\; Z_v(a_{E_v, \psi, \phi},1)- \mu_{\Art,v}(a_{E_v, \psi, \phi})\,,
\]
where we set $a_{E_v, \psi, \phi}:=a_{E_{v,i}, \psi, \phi}\colon g\mapsto\delta_{g\psi,\phi}$.
\qed
\end{Theorem}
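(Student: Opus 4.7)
The theorem is, as the text announces, a direct consequence of Theorem~\ref{ThmOmega}\ref{ThmOmega_B}, Lemma~\ref{LemmaIndL} and Proposition~\ref{PropArtinMeasure}, and the plan is simply to verify that the two sides of the identity coincide across the three cases of the trichotomy $\{\phi=\psi\}$; $\{\phi\neq\psi,\ j(\phi)=j(\psi)\}$; $\{j(\phi)\neq j(\psi)\}$ used in Theorem~\ref{ThmOmega}\ref{ThmOmega_B}.

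First I would set $K:=E_{v,i}$, so that in the notation of Lemma~\ref{LemmaIndL} one has $e_K=e_i$, $f_K=f_i$ and $\tilde q_i=q_v^{f_i}$. The right-hand side $Z_v(a_{E_v,\psi,\phi},1)-\mu_{\Art,v}(a_{E_v,\psi,\phi})$ is then computed by combining Proposition~\ref{PropArtinMeasure}, which reduces $Z_v$ and $\mu_{\Art,v}$ to the two finite sums appearing in Lemma~\ref{LemmaIndL}, with the explicit values of those sums supplied by the lemma. After matching the lemma's $a_{K,\phi,\psi}$ against our $a_{E_v,\psi,\phi}$ (i.e.\ with the roles of source and target of the embeddings interchanged), the second formula in Lemma~\ref{LemmaIndL}, specialized at $s=1$, yields
\[
Z_v(a_{E_v,\psi,\phi},1)\;=\;\tfrac{1}{e_i}\cdot\tfrac{q_v^{(j(\psi),j(\phi))}}{\tilde q_i-1},
\]
while the first formula gives a three-case value for $\mu_{\Art,v}(a_{E_v,\psi,\phi})$: it equals $v(\FD_{\phi(E_{v,i})/Q_v})$ if $\phi=\psi$, equals $-v\bigl(\psi(y_i)-\phi(y_i)\bigr)$ if $\phi\neq\psi$ but $j(\phi)=j(\psi)$, and vanishes if $j(\phi)\neq j(\psi)$.

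Subtracting these values from $Z_v(a_{E_v,\psi,\phi},1)$ in the three respective cases, and using that $(j(\psi),j(\phi))=0$ in the first two cases (so $q_v^{(j(\psi),j(\phi))}=1$), produces precisely the three values
\[
\tfrac{1}{e_i(\tilde q_i-1)}-v(\FD_{\phi(E_{v,i})/Q_v}),\quad \tfrac{1}{e_i(\tilde q_i-1)}+v\bigl(\psi(y_i)-\phi(y_i)\bigr),\quad \tfrac{q_v^{(j(\psi),j(\phi))}}{e_i(\tilde q_i-1)}
\]
listed for $v\bigl(\Omega(E_v,\phi,\psi)\bigr)$ in Theorem~\ref{ThmOmega}\ref{ThmOmega_B}. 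This completes the proof.

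There is no substantive obstacle; the entire argument is a case-by-case matching. The only point requiring care is the bookkeeping of the order of embeddings in the notation $a_{E_v,\psi,\phi}$ of the statement versus $a_{K,\phi,\psi}$ in Lemma~\ref{LemmaIndL} (which interchanges the roles of $j(\phi)$ and $j(\psi)$ in the Frobenius residue condition), together with keeping straight the conventions on the integer representative $(j,j')\in\{0,\dots,f_i-1\}$ of $j-j'$ modulo $f_i$.
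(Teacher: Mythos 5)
Your argument follows exactly the route the paper intends for this \qed-marked statement: read off $\mu_{\Art,v}(a_{E_v,\psi,\phi})$ and $Z_v(a_{E_v,\psi,\phi},1)$ from Lemma~\ref{LemmaIndL} via Proposition~\ref{PropArtinMeasure}, and match the three cases of Theorem~\ref{ThmOmega}\ref{ThmOmega_B}. The three-case bookkeeping and the handling of $\mu_{\Art,v}$ are all fine.

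There is, however, one step where what you write does not follow literally from the cited lemma, and it is precisely the exponent bookkeeping you flag as ``requiring care.'' You claim that applying the second formula of Lemma~\ref{LemmaIndL} to $a_{E_v,\psi,\phi}$ (i.e.\ with $\phi$ and $\psi$ interchanged) gives $Z_v(a_{E_v,\psi,\phi},1)=\tfrac{q_v^{(j(\psi),j(\phi))}}{e_i(\tilde q_i-1)}$. But that formula, as printed, already carries the exponent $(j(\psi),j(\phi))$ on the right for $a_{K,\phi,\psi}$; the interchange $\phi\leftrightarrow\psi$ therefore produces $(j(\phi),j(\psi))$, not $(j(\psi),j(\phi))$, and when $j(\phi)\ne j(\psi)$ these differ (they sum to $f_i$). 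The value you write is nevertheless the correct one: tracing through the definition of $W^n_L$, the constraint $g\phi=\psi$ forces $n\equiv j(\psi)-j(\phi)\pmod{f_K}$, so the Dirichlet series $\tfrac{1}{e_L}\sum_{n\ge1}\sum_{g\in W^n_L}a_{K,\phi,\psi}(g)\,q_v^{-ns}$ equals $\tfrac{1}{e_K}\,\tfrac{q_v^{(j(\phi),j(\psi))s}}{q_v^{f_Ks}-1}$, i.e.\ the exponent in Lemma~\ref{LemmaIndL}'s second formula is transposed in the paper (a quick check with $K=L$ unramified of degree $3$, $\phi=\mathrm{id}$ and $\psi$ the $q_v$-power Frobenius confirms this). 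With that transposition corrected, interchanging $\phi\leftrightarrow\psi$ restores $(j(\psi),j(\phi))$ and your computation matches Theorem~\ref{ThmOmega}\ref{ThmOmega_B} as claimed. So your final identity and the overall approach are correct, but the intermediate step amounts to silently repairing an exponent typo in Lemma~\ref{LemmaIndL} rather than quoting it as stated; this should be made explicit rather than attributed to ``no substantive obstacle.''
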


\begin{Definition}\label{DefVPsiLocSht}
If $u\in\Koh_{1,v}(\ulHM,Q_v):=\Hom_{A_v}\bigl(\Koh^1_v(\ulHM,A_v),Q_v\bigr)$ is an $E_v$-generator there is an $a\in E_v\mal$, unique up to multiplication with an element of $\CO_{E_v}\mal$, such that $a^{-1}u$ is an $\CO_{E_v}$-generator of $\Koh_{1,v}(\ulHM,A_v)$. Then we define the valuation $v_\psi(u):=v\bigl(\psi(a)\bigr)\in\BZ$. {\color{red} (NOTE THAT \ $v_\psi(u)\;\in\;\BQ$ \ IN GENERAL; SEE ERRATUM~\ref{Erratum1})}

Note that if $\ulM = (M, \tau_M)$ is a uniformizable $A$-motive over $L$ with good model $\ul\CM$ and $\ulHM = \ulHM_v(\ul\CM)$ is the local shtuka at $v$ associated with $\ul\CM$ as in Example \ref{AMotLocSht}, then for an $E$-generator $u\in\Koh_{1,\Betti}(\ulM,Q)$ the present definition of $v_\psi\bigl(h_{\Betti,v}(u)\bigr)$ coincides with the definition of $v_\psi(u)$ from \eqref{EqVPsi}.
\end{Definition}

\begin{Corollary}\label{CorLubintateperiod}
Let $\phi,\psi\in H_{E_v}$ with $i(\phi)=i(\psi)=:i$ and assume that $E_{v,i}$ is separable over $Q_v$. Let $u\in\Koh_{1,v}(\ulHM_{E_v,\phi},Q_v)$ be a generator as $E_v$-module and let $\omega_\psi$ be an $L\dbl y_{i}-\psi(y_{i})\dbr$-generator of $\Koh^\psi(\ulHM_{E_v,\phi},L\dbl y_{i}-\psi(y_{i})\dbr)$. Then $\int_u\omega_\psi:=u\otimes\id_{\BC_v\dpl z-\zeta\dpr}\bigl(h^{-1}_{v,\dR}(\omega_\psi)\bigr)$ has valuation
\[
v\bigl({\TS\int_u}\omega_\psi\bigr) \;=\; Z_v(a_{E_v, \psi, \phi},1)- \mu_{\Art,v}(a_{E_v, \psi, \phi})-v(\FD_{\psi(E_v)/Q_v})+v(\omega_\psi)+v_\psi(u)\,,
\]
where $v(\omega_\psi)$ and $v_\psi(u)$ were defined in Definitions~\ref{DefValuationOfomega} and \ref{DefVPsiLocSht}, and $\FD_{\psi(E_v)/Q_v}$ is the different.
\end{Corollary}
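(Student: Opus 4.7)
The plan is to reduce the claim to the normalized case handled in Theorem~\ref{ThmOmega}(\ref{ThmOmega_D}) and then invoke Theorem~\ref{Lubintateperiod}. First I would fix the trace-dual $\CO_{E_v}$-generator $u^\circ\in\Koh_{1,v}(\ulHM_{E_v,\phi},A_v)$ appearing in Theorem~\ref{ThmOmega}(\ref{ThmOmega_D}) and the specific generator $\omega_\psi^\circ:=1\in\Koh^\psi(\ulHM_{E_v,\phi},L\dbl y_i-\psi(y_i)\dbr)$ from \ref{Point4.12}, which is admissible in the sense of Definition~\ref{DefValuationOfomega}. The computation already carried out in the proof of Theorem~\ref{ThmOmega}(\ref{ThmOmega_D}) shows that $\int_{u^\circ}\omega_\psi^\circ=\Omega(E_v,\phi,\psi)$, and Theorem~\ref{Lubintateperiod} then gives $v(\int_{u^\circ}\omega_\psi^\circ)=Z_v(a_{E_v,\psi,\phi},1)-\mu_{\Art,v}(a_{E_v,\psi,\phi})$. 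Since $v_\psi(u^\circ)=0=v(\omega_\psi^\circ)$ by construction, the corollary is verified in this normalized situation.

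Next I would write the given generators as $u=a\cdot u^\circ$ with $a\in E_v^\times$ (unique up to $\CO_{E_v}^\times$) and $\omega_\psi=x\cdot\omega_\psi^\circ$ with $x\in L\dbl y_i-\psi(y_i)\dbr^\times$, so that by Definitions~\ref{DefVPsiLocSht} and \ref{DefValuationOfomega} we have $v_\psi(u)=v(\psi(a))$ and $v(\omega_\psi)=v(x\mod y_i-\psi(y_i))$. Because $h_{v,\dR}^{-1}$ commutes with the action of $E_v\subset\QEnd_R(\ulHM_{E_v,\phi})$ and because $\omega_\psi$ is supported in the $\psi$-coordinate of the decomposition of Proposition~\ref{PropCMTateMod}, the element $h_{v,\dR}^{-1}(\omega_\psi)$ lies in $E_v\otimes_{Q_v}\BC_v\dpl z-\zeta\dpr=\prod_{\psi'}\BC_v\dpl y_{i(\psi')}-\psi'(y_{i(\psi')})\dpr$ with its sole non-zero coordinate equal to $x\cdot\Omega(E_v,\phi,\psi)$ in position $\psi$. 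Since $u=a\cdot u^\circ$ pairs with $b\cdot\check u$ via $\Tr_{E_v/Q_v}(ab)$, and $a$ multiplies the $\psi$-coordinate by $\psi(a)$, the trace picks out precisely that coordinate and yields
\[
\TS\int_u\omega_\psi \;=\; \psi(a)\cdot x\cdot\Omega(E_v,\phi,\psi).
\]

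Finally I would check that $v$ is additive on this product. Both $\psi(a)\in L^\times$ and $x$ have $\hat v=0$, while $\hat v(\Omega(E_v,\phi,\psi))\in\{0,1\}$ by Theorem~\ref{ThmOmega}(\ref{ThmOmega_A}); hence the leading $(z-\zeta)$-adic coefficient of the product is the product of the respective leading coefficients evaluated at $z=\zeta$, and since the usual valuation on $\BC_v$ is a genuine valuation, $v\bigl(\int_u\omega_\psi\bigr)=v(\psi(a))+v(x)+v(\Omega(E_v,\phi,\psi))$. Substituting from the three ingredients identified above delivers the claimed formula. No serious obstacle arises: the only delicate step is the identification of $\int_u\omega_\psi$ as $\psi(a)\cdot x\cdot\Omega$ via the trace pairing together with the $E_v$-equivariance of $h_{v,\dR}$, both of which follow from the machinery already set up in \ref{Point4.2}--\ref{Point4.12}.
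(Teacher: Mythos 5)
Your proposal is correct and follows essentially the same route as the paper's proof: normalize to an $\CO_{E_v}$-generator $u^\circ$ and an admissible $\omega_\psi^\circ$, invoke Theorem~\ref{ThmOmega}\ref{ThmOmega_D} to identify $\int_{u^\circ}\omega_\psi^\circ$ with $\Omega(E_v,\phi,\psi)$, and then scale by $a$ and $x$ to pick up $v_\psi(u)=v(\psi(a))$ and $v(\omega_\psi)=v(x\bmod y_i-\psi(y_i))$, appealing to Theorem~\ref{Lubintateperiod} for the $\Omega$ contribution. The only cosmetic difference is that you spell out the trace-pairing step and the multiplicativity of $v$ more explicitly than the paper, which simply notes that $(a\otimes1)x$ reduces to $\psi(a)x$ in the $\psi$-component.
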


\begin{proof}
Let $a\in E_v$ be such that $u\open:=a^{-1}u$ is an $\CO_{E_v}$-generator of $\Koh_{1,v}(\ulHM_{E_v,\phi},A_v)$ and let $\omega^\circ_\psi$ be an $L\dbl y_{i}-\psi(y_{i})\dbr$-generator of $\Koh^\psi(\ulHM_{E_v,\phi},L\dbl y_{i}-\psi(y_{i})\dbr)$ such that $\omega^\circ_\psi\mod y_i-\phi(y_i)\in\Koh^1_\dR(\ulHM_{E_v,\phi},L)$ is an $R$-generator of $\Koh^1_\dR(\ulHM_{E_v,\phi},R)$. Let $x\in L\dbl y_{i}-\psi(y_{i})\dbr\mal$ such that $\omega_\psi=x\cdot\omega^\circ_\psi$. Moreover, let $D_\psi$ be a generator as $\psi(\CO_{E_{v}})$-module of the different $\FD_{\psi(E_{v,i})/Q_v}$. Then 
\[
{\TS\int_u}\omega_\psi \;=\; (a\otimes1)x\cdot{\TS\int_{u\open}}\omega^\circ_\psi \;=\; (a\otimes1)x\cdot\Omega(E_v, \phi, \psi)\cdot D_\psi^{-1}\;\in\;\BC_v\dpl z-\zeta\dpr
\]
up to multiplication by an element of $R^\times +(z-\zeta)\cdot L\dbl z-\zeta\dbr$ by Theorem~\ref{ThmOmega}\ref{ThmOmega_D}. The element $(a\otimes1)x\in E_v\otimes_{Q_v}L\dbl z-\zeta\dbr$ lies in the $\psi$-component $L\dbl y_i-\psi(y_i)\dbr$ of the product decomposition \eqref{EqDecomp1}, and in that component $a\otimes 1$ is congruent to $\psi(a)$ modulo $y_i-\psi(y_i)$. Therefore
\[
v\bigl({\TS\int_u}\omega_\psi\bigr) \;=\; v\bigl(\psi(a)x\cdot\Omega(E_v, \phi, \psi)\cdot D_\psi^{-1}\bigr) \;=\; Z_v(a_{E_v, \psi, \phi},1)- \mu_{\Art,v}(a_{E_v, \psi, \phi})-v(\FD_{\psi(E_v)/Q_v})+v(\omega_\psi)+v_\psi(u)\,.
\]
by Theorem~\ref{Lubintateperiod}. 
\end{proof}

To compute $v(\int_u\omega_\psi)$ for general $\ulHM$ we need the following

\begin{Definition}\label{DefapsiPhi}
Let $E_v$ be separable over $Q_v$ and let $\Phi=(d_\phi)_{\phi\in H_{E_v}}$ be a local CM-type. For $\psi \in H_{E_v}$ let $a_{E_v,\psi, \Phi}\in\CC(\sG_{Q_v}, \BQ)$ and $a^0_{E_v,\psi, \Phi}\in\CC^0(\sG_{Q_v}, \BQ)$ be given by the formulas
\begin{eqnarray}\label{EqapsiPhi}
a_{E_v,\psi, \Phi}(g) & := &  {\TS\sum\limits_{\phi \in H_{E_v}}}d_\phi\cdot a_{E_v,\psi, \phi}(g) \es=\es d_{g\psi}\qquad\text{and}\\[2mm]
a^0_{E_v,\psi, \Phi}(g) & := &  \tfrac{1}{\#H_L}\TS\sum\limits_{\eta\in H_L}d_{\eta^{-1}g\eta\psi}\,.
\end{eqnarray}
Note that $a_{E_v,\psi, \Phi}$ and $a^0_{E_v,\psi, \Phi}$ factor through $\Gal(E_v^\nor/Q_v)$ where $E_v^\nor$ is the Galois closure of $\psi(E_v)$ in $Q_v^\alg$. In particular, $a^0_{E_v,\psi, \Phi}$ does not depend on the field $L$ provided $\psi(E_v)\subset L$ for all $\psi\in H_{E_v}$.

These functions are the local counterparts to the functions $a_{E,\psi, \Phi}\in\CC(\sG_Q, \BQ)$ and $a^0_{E,\psi, \Phi}\in\CC^0(\sG_Q, \BQ)$ which were defined in \eqref{Eq:a_E} and \eqref{Eq:a0_E}. The membership in $\CC(\sG_{Q_v}, \BQ)$, respectively $\CC(\sG_Q, \BQ)$ is indicated by the index which gives reference to the $Q_v$-algebra $E_v$, respectively the $Q$-algebra $E$. In fact, if $E_v=E\otimes_QQ_v$ and hence $H_{E_v}=H_E$, then $a_{E_v,\psi, \Phi}$ is equal to the image of $a_{E,\psi,\Phi}$ under the map $\CC(\sG_Q, \BQ)\to\CC(\sG_{Q_v}, \BQ)$ from Definition~\ref{DefArtinMeasure}. However, this is in general not true for $a^0_{E_v,\psi, \Phi}$ and $a^0_{E,\psi,\Phi}$, because if $L$ is the closure of $K$ in $Q_v^\alg$, then $H_L$ is in general strictly contained in $H_K$.
\end{Definition}

For general $\ulHM$ we can now prove the following

\begin{Theorem}\label{ThmDMPeriod}
Let $\ulHM$ be a local shtuka over $R$ with complex multiplication by the ring of integers $\CO_{E_v}$ in a commutative, semi-simple, separable $Q_v$-algebra $E_v$ with local CM-type $\Phi$, and assume that $\psi(E_v)\subset L$ for all $\psi\in H_{E_v}$ and that $L$ is separable over $Q_v$. Let $u\in\Koh_{1,v}(\ulHM,Q_v)$ be an $E_v$-generator and let $\omega_\psi$ be an $L\dbl y_{i(\psi)}-\psi(y_{i(\psi)})\dbr$-generator of $\Koh^\psi(\ulHM,L\dbl y_{i(\psi)}-\psi(y_{i(\psi)})\dbr)$. Then the period $\int_u\omega_\psi:=u\otimes\id_{\BC_v\dpl z-\zeta\dpr}\bigl(h^{-1}_{v,\dR}(\omega_\psi)\bigr)$ has valuation
\[
v\bigl({\TS\int_u}\omega_\psi\bigr) \;=\; Z_v(a_{E_v, \psi, \Phi},1)- \mu_{\Art,v}(a_{E_v, \psi, \Phi})-v(\FD_{\psi(E_v)/Q_v})+v(\omega_\psi)+v_\psi(u)\,,
\]
where $v(\omega_\psi)$ and $v_\psi(u)$ were defined in Definitions~\ref{DefValuationOfomega} and \ref{DefVPsiLocSht}, and $\FD_{\psi(E_v)/Q_v}$ is the different.
\end{Theorem}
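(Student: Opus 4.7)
The plan is to upgrade Corollary~\ref{CorLubintateperiod}, which treats a single Lubin--Tate factor $\ulHM_{E_v,\phi}$, to a general $\ulHM$ by means of the explicit formula \eqref{EqPeriodM} for $h_{v,\dR}(\check u)=\tau_{\hat M}^{-1}\check u$. Under our separability hypothesis, the decomposition \eqref{EqPropCMTateMod1} refines \eqref{EqOER} into a product indexed by $\psi\in H_{E_v}$, and the $\psi$-projection of the $(i(\psi),j(\psi))$-component of \eqref{EqPeriodM} equals
$$\epsilon_{i(\psi),j(\psi)}^{-1}\,c_{i(\psi),j(\psi)}\cdot\prod_{\phi\in H_{E_v}}\Omega(E_v,\phi,\psi)^{-d_\phi},$$
where we set $\Omega(E_v,\phi,\psi):=1$ whenever $i(\phi)\neq i(\psi)$ (the corresponding $\hat\sigma$-shifted $\tplus{y_i,\phi(y_i)}$ factor in $\check u$ being then trivial, and the product in \eqref{EqPeriodM} being taken only over $\phi\in H_{E_{v,i(\psi)}}$).

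The first step is to observe that the leading factor $\epsilon_{i(\psi),j(\psi)}^{-1}c_{i(\psi),j(\psi)}$ lies in $\CO_{L^\sep}\dbl y_{i(\psi)}\dbr^\times$ with unit leading coefficient, so that it contributes $v=0$; this is forced by the recursion in Point~\ref{Point4.2} which produces $c_{i,j,0}\in\CO_{L^\sep}^\times$. Next, taking $u^\circ$ to be the trace-pairing dual of $\check u$ (Lemma~\ref{LemmaTrace}) and $\omega^\circ_\psi=1$ in the $\psi$-factor, an argument identical to the proof of Theorem~\ref{ThmOmega}\ref{ThmOmega_D} gives
$$\TS\int_{u^\circ}\omega^\circ_\psi \;\equiv\; \prod_{\phi\in H_{E_v}}\Omega(E_v,\phi,\psi)^{d_\phi}\qquad\text{modulo }R^\times + (z-\zeta)L\dbl z-\zeta\dbr.$$
Applying the multiplicative valuation $v$ term-by-term via Theorem~\ref{Lubintateperiod}, combined with the $\BQ$-linearity of $Z_v(\,\cdot\,,1)$ and $\mu_{\Art,v}$ and the identity $a_{E_v,\psi,\Phi}=\sum_\phi d_\phi\,a_{E_v,\psi,\phi}$ of Definition~\ref{DefapsiPhi}, yields
$$v\bigl({\TS\int_{u^\circ}\omega^\circ_\psi}\bigr)\;=\;Z_v(a_{E_v,\psi,\Phi},1)-\mu_{\Art,v}(a_{E_v,\psi,\Phi}).$$
The extension of the sum from $\phi$ with $i(\phi)=i(\psi)$ to all $\phi\in H_{E_v}$ is harmless because $i(g\psi)=i(\psi)$ for every $g$ forces $a_{E_v,\psi,\phi}\equiv 0$ whenever $i(\phi)\neq i(\psi)$.

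To conclude for arbitrary $u\in\Koh_{1,v}(\ulHM,Q_v)$ and $\omega_\psi$, write $u=a\cdot u^\circ$ with $a\in E_v^\times$ (unique up to $\CO_{E_v}^\times$) and $\omega_\psi=x\cdot\omega^\circ_\psi$ with $x\in L\dbl y_{i(\psi)}-\psi(y_{i(\psi)})\dbr^\times$. Since $a\otimes 1$ acts on the $\psi$-factor of $E_v\otimes_{Q_v}L\dbl z-\zeta\dbr$ as multiplication by $\psi(a)$, one has $\int_u\omega_\psi=\psi(a)\cdot x\cdot\int_{u^\circ}\omega^\circ_\psi$, and taking $v$ adds precisely the corrections $v_\psi(u)+v(\omega_\psi)$ from Definitions~\ref{DefVPsiLocSht} and~\ref{DefValuationOfomega}. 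The main obstacle is not analytic but organizational: carefully tracking the $\psi$-component of the product decomposition \eqref{EqPropCMTateMod1}, confirming that the \'etale factor $\ulHM_{E_v,0}$ contributes zero valuation, and justifying additivity of $v$ on a product like $\prod_\phi\Omega(E_v,\phi,\psi)^{d_\phi}$ (which follows from the multiplicativity of both $\hat v$ and the residue-field absolute value entering the definition of $|\cdot|_v$).
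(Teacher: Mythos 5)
Your proposal is correct and follows essentially the same route as the paper: both decompose $\ulHM$ as $\ulHM_{E_v,0}\otimes\bigotimes_\phi\ulHM_{E_v,\phi}^{\otimes d_\phi}$, read off the $\psi$-component of \eqref{EqPeriodM}, note that the \'etale factor $\epsilon_{i,j}c_{i,j}^{-1}$ is a unit contributing zero valuation, invoke the trace pairing and Theorem~\ref{ThmOmega}\ref{ThmOmega_D} for $\int_{u^\circ}\omega^\circ_\psi$, and then pass to general $u$ and $\omega_\psi$ by multiplying by $\psi(a)$ and $x$ to pick up the corrections $v_\psi(u)+v(\omega_\psi)$. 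The step identifying the harmlessness of extending the sum to all $\phi$ (since $a_{E_v,\psi,\phi}\equiv 0$ when $i(\phi)\neq i(\psi)$) is also precisely the paper's concluding observation.
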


\begin{proof}
As in \ref{Point4.1} the local shtuka $\ulHM$ is isomorphic to the tensor product $\ulHM_{E_v,0}\otimes\bigotimes_\phi\ulHM_{E_v,\phi}{}^{\otimes d_\phi}$ over $\CO_{E_v,R}$. Let $i:=i(\psi)$ and $j:=j(\psi)$. For every $\ulHM_{E_v,\phi}$ we fix the $L\dbl y_{i}-\psi(y_{i})\dbr$-generator $\omega^\circ_{\psi,\phi}:=1\in\Koh^\psi(\ulHM_{E_v,\phi},L\dbl y_{i}-\psi(y_{i})\dbr)$. In addition, we let $\omega^\circ_{\psi,0}:=1\in\Koh^\psi(\ulHM_{E_v,0},L\dbl y_{i}-\psi(y_{i})\dbr)$. Then we can take the tensor product $\omega^\circ_\psi:=\omega^\circ_{\psi,0}\otimes\bigotimes\limits_{\phi\in H_{E_v}}(\omega^\circ_{\psi,\phi})^{\otimes d_\phi}$ in
\[
\Koh^\psi(\ulHM,L\dbl y_{i}-\psi(y_{i})\dbr)\;\cong\;\Koh^\psi(\ulHM_{E_v,0},L\dbl y_{i}-\psi(y_{i})\dbr)\otimes\bigotimes\limits_{\phi\in H_{E_v}}\Koh^\psi(\ulHM_{E_v,\phi},L\dbl y_{i}-\psi(y_{i})\dbr)^{\otimes d_\phi}.
\]
It is an $L\dbl y_{i}-\psi(y_{i})\dbr$-generator as in Definition~\ref{DefValuationOfomega}. Let $x\in L\dbl y_{i}-\psi(y_{i})\dbr\mal$ be such that $\omega_\psi=x\cdot\omega^\circ_\psi$, and let further $a\in E_v$ be such that $u\open:=a^{-1}u$ is an $\CO_{E_v}$-generator of $\Koh_{1,v}(\ulHM,A_v)$. Moreover, let $D_\psi$ be a generator as $\psi(\CO_{E_{v}})$-module of the different $\FD_{\psi(E_{v,i})/Q_v}$. Then \eqref{EqPeriodM}, \eqref{EqOmega} and Lemma~\ref{LemmaTrace} imply that
\[
{\TS\int_{u\open}}\omega^\circ_\psi\;=\; D_\psi^{-1}\cdot\epsilon_{i,j}c_{i,j}^{-1}\prod_{\phi\in H_{E_{v,i}}}\Omega(E_v,\phi,\psi)^{d_\phi}
\]
up to multiplication by an element of $R^\times +(z-\zeta)\cdot L\dbl z-\zeta\dbr$. Since $\epsilon_{i,j}c_{i,j}^{-1}\in\bigl(\CO_{E_v}\otimes_{A_v}\CO_{\BC_v}\dbl z\dbr\bigr)\mal$ we conclude as in the proof of Corollary~\ref{CorLubintateperiod} that ${\TS\int_u}\omega_\psi = (a\otimes1)x\cdot{\TS\int_{u\open}}\omega^\circ_\psi$ and 
\begin{eqnarray*}
v\bigl({\TS\int_u}\omega_\psi\bigr) & = &  v\Bigl(D_\psi^{-1}\cdot\psi(a)x\cdot\epsilon_{i,j}c_{i,j}^{-1}\prod_{\phi\in H_{E_{v,i}}}\Omega(E_v,\phi,\psi)^{d_\phi}\Bigr)\\[2mm]
 & = & v(\omega_\psi)+v_\psi(u)-v(\FD_{\psi(E_v)/Q_v})+\sum_{\phi\in H_{E_{v,i}}}\bigl( Z_v(a_{E_v, \psi, \phi},1)- \mu_{\Art,v}(a_{E_v, \psi, \phi})\bigr)\cdot d_\phi\\[2mm]
& = & Z_v(a_{E_v, \psi, \Phi},1)- \mu_{\Art,v}(a_{E_v, \psi, \Phi})-v(\FD_{\psi(E_v)/Q_v})+v(\omega_\psi)+v_\psi(u)\,,
\end{eqnarray*}
because $i(\phi)\ne i(\psi)$ implies that $a_{E_v,\psi,\phi}(g)=\delta_{g\psi,\phi}=0$ for all $g\in\sG_{Q_v}$.
\end{proof}

\begin{Corollary}\label{CorDMPeriod}
Keep the situation of Theorem~\ref{ThmDMPeriod}. For every $\eta\in H_L$ note that $i(\eta\psi)=i(\psi)$, let $\ulHM{}^\eta$ and $\omega_\psi^\eta\in\Koh^{\eta\psi}(\ulHM{}^\eta,L\dbl y_{i(\psi)}-\eta\psi(y_{i(\psi)})\dbr)$ be obtained by extension of scalars via $\eta$, and choose an $E_v$-generator $u_\eta\in\Koh_{1,v}(\ulHM{}^\eta,Q_v)$. Then
\[
\TS\tfrac{1}{\#H_L}\sum\limits_{\eta\in H_L}v({\TS\int_{u_\eta}\omega_\psi^\eta}) \;=\; Z_v(a^0_{E_v,\psi,\Phi},1)-\mu_{\Art,v}(a^0_{E_v,\psi,\Phi})-\dfrac{v(\Fd_{\psi(E_v)/Q_v})}{[\psi(E_v):Q_v]}+\tfrac{1}{\#H_L}\sum\limits_{\eta\in H_L}\bigl(v(\omega_\psi^\eta)+v_{\eta\psi}(u_\eta)\bigr)\,,
\]
where $\Fd_{\psi(E_v)/Q_v}\subset A_v$ is the discriminant of the field extension $\psi(E_v)/Q_v$.
\end{Corollary}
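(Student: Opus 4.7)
The plan is to reduce immediately to Theorem~\ref{ThmDMPeriod} applied to each base-changed local shtuka $\ulHM{}^\eta$, and then average, using the linearity of $Z_v(\,\cdot\,,s)$ and $\mu_{\Art,v}$ together with an explicit identification of the CM-type of $\ulHM{}^\eta$. First, since $L$ is Galois over $Q_v$ (because $L$ contains $\psi(E_v)$ for all $\psi\in H_{E_v}$ and these generate the Galois closure of the $\psi(E_v)$, which by hypothesis lies in $L$), every $\eta\in H_L$ is an automorphism of $L$, so $\ulHM{}^\eta$ is again a local shtuka over $R$ (after possibly replacing $R$ by its image under $\eta$, which is harmless as the valuation $v$ is preserved).

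Next, I would determine the CM-type $\Phi^\eta=(d'_\phi)_{\phi\in H_{E_v}}$ of $\ulHM{}^\eta$. Since the Hodge-Pink lattice of $\ulHM$ factors as $\Fq^\ulHM=\prod_\phi\bigl(y_{i(\phi)}-\phi(y_{i(\phi)})\bigr)^{-d_\phi}\Koh^\phi(\ulHM,L\dbl y_{i(\phi)}-\phi(y_{i(\phi)})\dbr)$, applying $\eta$ replaces $\phi(y_{i(\phi)})$ by $\eta\phi(y_{i(\phi)})$ and carries the $\phi$-eigenspace of $\ulHM$ to the $\eta\phi$-eigenspace of $\ulHM{}^\eta$. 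Hence $d'_{\eta\phi}=d_\phi$, i.e.\ $d'_\phi=d_{\eta^{-1}\phi}$. Consequently, with $\psi$ as in the statement and with $\eta\psi\in H_{E_v}$ the character for which $\omega_\psi^\eta$ is an eigenvector on $\ulHM{}^\eta$, we have
\[
a_{E_v,\eta\psi,\Phi^\eta}(g)\;=\;d'_{g\eta\psi}\;=\;d_{\eta^{-1}g\eta\psi}\quad\text{for every }g\in\sG_{Q_v}.
\]

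Now Theorem~\ref{ThmDMPeriod} applied to $\ulHM{}^\eta$, the generator $u_\eta$ and the eigenvector $\omega_\psi^\eta$ yields
\[
v\bigl({\TS\int_{u_\eta}}\omega_\psi^\eta\bigr)\;=\;Z_v(a_{E_v,\eta\psi,\Phi^\eta},1)-\mu_{\Art,v}(a_{E_v,\eta\psi,\Phi^\eta})+v(\omega_\psi^\eta)+v_{\eta\psi}(u_\eta).
\]
Averaging over $\eta\in H_L$ and using $\BQ$-linearity of $Z_v(\,\cdot\,,s)$ and $\mu_{\Art,v}$, the cohomological contribution is $Z_v(\bar a,1)-\mu_{\Art,v}(\bar a)$ where $\bar a:=\tfrac{1}{\#H_L}\sum_\eta a_{E_v,\eta\psi,\Phi^\eta}$. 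By the displayed identity above,
\[
\bar a(g)\;=\;\tfrac{1}{\#H_L}\TS\sum\limits_{\eta\in H_L}d_{\eta^{-1}g\eta\psi}\;=\;a^0_{E_v,\psi,\Phi}(g),
\]
which is exactly Definition~\ref{DefapsiPhi}. Substituting this gives the claimed formula.

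The only step requiring genuine care is the identification of the CM-type under base change by $\eta$; everything else is linearity of the measures $Z_v$ and $\mu_{\Art,v}$ on $\CC(\sG_{Q_v},\BQ)$ combined with the definition of $a^0_{E_v,\psi,\Phi}$ as the symmetrization of $a_{E_v,\psi,\Phi}$ under conjugation by $H_L$. If desired, the conjugation-invariance and the fact that $a^0_{E_v,\psi,\Phi}\in\CC^0(\sG_{Q_v},\BQ)$ (so that the Artin-theoretic quantities $Z_v$ and $\mu_{\Art,v}$ are indeed of the kind produced by traces of representations) can be verified directly from the averaging formula.
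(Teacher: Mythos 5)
Your proposal is correct and follows essentially the same route as the paper: identify the CM-type of $\ulHM{}^\eta$ as $(d'_\phi)_\phi$ with $d'_\phi=d_{\eta^{-1}\phi}$, apply Theorem~\ref{ThmDMPeriod} to each $\ulHM{}^\eta$, and then average using the $\BQ$-linearity of $Z_v(\,\cdot\,,1)$ and $\mu_{\Art,v}$ together with the defining formula for $a^0_{E_v,\psi,\Phi}$. The paper's proof is simply a terser version of the same computation, writing $\eta\Phi$ where you write $\Phi^\eta$.
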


\begin{proof}
Since $\ulHM{}^\eta$ has complex multiplication by $\CO_{E_v}$ with local CM-type $\eta\Phi:=(d'_\phi)_{\phi\in H_{E_v}}$ with $d'_\phi=d_{\eta^{-1}\phi}$, Theorem~\ref{ThmDMPeriod} implies
\begin{equation}\label{EqCorDMPeriod}
v\bigl({\TS\int_{u_\eta}}\omega_\psi^\eta\bigr) \;=\; Z_v(a_{E_v,\, \eta\psi,\, \eta\Phi},1)- \mu_{\Art,v}(a_{E_v,\,\eta\psi,\, \eta\Phi})-v(\FD_{\eta\psi(E_v)/Q_v})+v(\omega_\psi^\eta)+v_{\eta\psi}(u_\eta)\,.
\end{equation}
We sum over all $\eta\in H_L$, divide by $\#H_L=[L:Q_v]$, and observe that $a_{E_v,\,\eta\psi,\, \eta\Phi}(g)=d'_{g\eta\psi}=d_{\eta^{-1}g\eta\psi}$ and $\FD_{\eta\psi(E_v)/Q_v}=\eta(\FD_{\psi(E_v)/Q_v})$, and hence 
\begin{align*}
\TS\sum\limits_{\eta\in H_L}v(\FD_{\eta\psi(E_v)/Q_v}) & \;=\;\TS v\Bigl(\,\prod\limits_{\eta\in H_L}\eta(\FD_{\psi(E_v)/Q_v})\Bigr)\;=\;v\bigl(N_{L/Q_v}(\FD_{\psi(E_v)/Q_v})\bigr)\\[1mm]
&\;=\;v\Bigl(N_{\psi(E_v)/Q_v}\bigl(N_{L/\psi(E_v)}(\FD_{\psi(E_v)/Q_v})\bigr)\Bigr)\;=\;[L:\psi(E_v)]\cdot v(\Fd_{\psi(E_v)/Q_v})\,.
\end{align*}
This proves the corollary.
\end{proof}

Finally we are ready to give the 

\begin{proof}[Proof of Theorem~\ref{MainThm}]
The proof proceeds like the one of the previous corollary applied to $\ulHM:=\ulHM_v(\ul\CM)$  for a model $\ul\CM$ of $\ulM$ with good reduction. Setting $E_v:=E\otimes_Q Q_v$, still $\ulHM{}^\eta:=\ulHM_v(\ul\CM^\eta)$ has complex multiplication by $\CO_{E_v}$ with local CM-type $\eta\Phi:=(d'_\phi)_{\phi\in H_E}$ with $d'_\phi=d_{\eta^{-1}\phi}$, where $\eta$ runs over all elements in $H_K$. To translate the global situation to the local one, we also use the elements $h_{\Betti,v}(u_\eta)\in\Koh_{1,v}(\ulHM{}^\eta,Q_v)$, and $\omega_\psi^\eta\otimes_KL\in \Koh^{\eta\psi}(\ulM,L\dbl y_{i(\eta\psi)}-\eta\psi(y_{i(\eta\psi)})\dbr)=\Koh^{\eta\psi}(\ulHM,L\dbl y_{i(\eta\psi)}-\eta\psi(y_{i(\eta\psi)})\dbr)$. Then $a_{E_v,\psi,\Phi}\in\CC(\sG_{Q_v}, \BQ)$ is the image of $a_{E,\psi,\Phi}\in \CC(\sG_Q, \BQ)$, and \eqref{EqCorDMPeriod} takes the form 
\begin{equation}\label{Eq1ProofOfThm1.3}
v\bigl({\TS\int_{u_\eta}}\omega_\psi^\eta\bigr) \;=\; Z_v(a_{E,\, \eta\psi,\, \eta\Phi},1)- \mu_{\Art,v}(a_{E,\,\eta\psi,\, \eta\Phi})-v(\FD_{\eta\psi(E_v)/Q_v})+v(\omega_\psi^\eta)+v_{\eta\psi}(u_\eta)\,,
\end{equation}
because the value of $v_{\eta\psi}\bigl(h_{\Betti,v}(u_\eta)\bigr)$ from Definition~\ref{DefVPsiLocSht} used in \eqref{EqCorDMPeriod}, coincides with the value of $v_{\eta\psi}(u_\eta)$ from \eqref{EqVPsi}. This time we sum over all $\eta\in H_K$, divide by $\#H_K=[K:Q]$, and observe that $a_{E,\,\eta\psi,\, \eta\Phi}(g)=d'_{g\eta\psi}=d_{\eta^{-1}g\eta\psi}$, and hence
\begin{equation}\label{Eq2ProofOfThm1.3}
\TS\tfrac{1}{\#H_K}\sum\limits_{\eta\in H_K}Z_v(a_{E,\, \eta\psi,\, \eta\Phi},1)- \mu_{\Art,v}(a_{E,\,\eta\psi,\, \eta\Phi}) \;=\;  Z_v(a^0_{E,\psi,\Phi},1)-\mu_{\Art,v}(a^0_{E,\psi,\Phi})\,.
\end{equation}
For every place $w$ of the field $\psi(E)$ above $v$ let $\psi(E)_w$ be the completion. Then $\psi(E)\otimes_Q Q_v = \prod_{w|v}\psi(E)_w$. Via the fixed inclusion $Q^\alg\subset Q_v^\alg$ we consider every $\eta\in H_K$ as a morphism $\eta\colon K\to Q^\alg\subset Q_v^\alg$. The induced morphism $\eta\otimes\id_{Q_v}\colon K\otimes_Q Q_v\to Q_v^\alg$, when restricted to a morphism $\psi(E)\otimes_Q Q_v\to Q_v^\alg$ factors over $\psi(E)_w$ for a unique $w$ which we denote by $w(\eta)$. We set $H_{K,w}:=\{\eta\in H_K\colon w(\eta)=w\}$ and consider the map 
\begin{equation}\label{EqProofOfThm1.3}
H_{K,w}\longto H_{\psi(E)_w}\,, \quad\eta\longmapsto (\eta\otimes\id_{Q_v})|_{\psi(E)_w}\;.
\end{equation}
The map is surjective, because every element of $H_{\psi(E)_w}$ can be restricted to a $Q$-homomorphism $\psi(E)\to Q^\alg\subset Q_v^\alg$, which extends to a $Q$-homomorphism $(\eta\colon K\to Q^\alg)\in H_K$ that automatically lies in $H_{K,w}$. We claim that two elements $\eta,\tilde\eta\in H_{K,w}$ have the same image under the map \eqref{EqProofOfThm1.3} if and only if $\tilde\eta=\eta\circ\alpha$ for an element $\alpha\in\Gal\bigl(K/\psi(E)\bigr)$. Indeed the latter condition is sufficient, because $\alpha\otimes\id_{Q_v}$ induces the identity on $\psi(E)_w$. To see that it is necessary let $\eta,\tilde\eta\in H_{K,w}\subset H_K=\Gal(K/Q)$ have the same image. Then their restrictions to $\psi(E)\subset\psi(E)_w$ coincide, and hence $\alpha:=\eta^{-1}\circ\tilde\eta\in\Gal(K/Q)$ lies in $\Gal\bigl(K/\psi(E)\bigr)$ as claimed. For every $\eta\in H_{K,w}$ the different $\FD_{\eta\psi(E_v)/Q_v}$ equals $(\eta\otimes\id_{Q_v})(\FD_{\psi(E)_w/Q_v})$ and only depends on the image of $\eta$ under the map \eqref{EqProofOfThm1.3}. Therefore, we compute
\begin{align*}
\TS\sum\limits_{\eta\in H_{K,w}}v(\FD_{\eta\psi(E_v)/Q_v}) & \;=\;\TS v\Bigl(\,\prod\limits_{\eta\in H_{K,w}}(\eta\otimes\id_{Q_v})(\FD_{\psi(E)_w/Q_v})\Bigr)\\[1mm]
&\;=\;v\Bigl(N_{\psi(E)_w/Q_v}(\FD_{\psi(E)_w/Q_v})^{\#\Gal\bigl(K/\psi(E)\bigr)}\Bigr)\;=\;[K:\psi(E)]\cdot v(\Fd_{\psi(E)_w/Q_v})\,.
\end{align*}
Summing over all $w|v$ and using that $\sum_{w|v}v(\Fd_{\psi(E)_w/Q_v})=v(\Fd_{\psi(E)/Q})$ by \cite[\S\,III.4, Corollary to Proposition~10]{SerreLF} we obtain from \eqref{Eq1ProofOfThm1.3} and \eqref{Eq2ProofOfThm1.3}
\[
\TS\tfrac{1}{\#H_K}\sum\limits_{\eta\in H_K}v({\TS\int_{u_\eta}\omega_\psi^\eta}) \;=\; Z_v(a^0_{E,\psi,\Phi},1)-\mu_{\Art,v}(a^0_{E,\psi,\Phi})-\dfrac{v(\Fd_{\psi(E)/Q})}{[\psi(E):Q]}+\tfrac{1}{\#H_K}\sum\limits_{\eta\in H_K}\bigl(v(\omega_\psi^\eta)+v_{\eta\psi}(u_\eta)\bigr).
\]
This proves Theorem~\ref{MainThm}.
\end{proof}

\begin{appendix}

\section{Appendix: Product Decompositions of Certain Rings} \label{Appendix}
\setcounter{equation}{0}

In this appendix we establish certain product decompositions for the rings used in this article. We begin with the following

\begin{Lemma}\label{LemDerivative}
Let $k$ be a field and let $z=\sum_{n=0}^{\infty}b_n y^n\in k\dbl y\dbr$. Let $\psi\colon k\dbl y\dbr\to R$ be a ring homomorphism into a $k$-algebra $R$. Then in $k\dbl y\dbr\wh\otimes_{k,\psi}R:=\invlim[n] k\dbl y\dbr/(y^n)\otimes_{k,\psi}R\cong R\dbl y\dbr$ the fraction $\tfrac{z\otimes 1-1\otimes\psi(z)}{y\otimes 1-1\otimes\psi(y)}$ exists and is congruent to $1\otimes \psi\bigl(\tfrac{dz}{dy}\bigr)$ modulo $y\otimes 1-1\otimes\psi(y)$.
\end{Lemma}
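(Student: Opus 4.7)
The plan is to reduce the assertion to the elementary factorization $y^n-\alpha^n=(y-\alpha)(y^{n-1}+y^{n-2}\alpha+\cdots+\alpha^{n-1})$ and sum it term by term with the coefficients $b_n$. First I would use the stated identification $k\dbl y\dbr\wh\otimes_{k,\psi}R\cong R\dbl y\dbr$ to replace abstract tensor-product notation with concrete power series: the image of $y\otimes 1$ is the variable $y$, and the images of $1\otimes\psi(y)$ and $1\otimes\psi(z)$ are the constants $\alpha:=\psi(y)\in R$ and $\psi(z)\in R$, respectively. Implicit in the hypothesis that $\psi$ is a ring homomorphism from $k\dbl y\dbr$ is a continuity assumption on $\psi$ for the $y$-adic topology, equivalently the topological nilpotence of $\alpha$ in $R$; this holds in all applications of the lemma in the paper and ensures that $\psi(z)=\sum b_n\alpha^n$ in $R$.

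With this setup, I would compute
\[
z\otimes 1-1\otimes\psi(z)\;=\;\sum_{n\ge 1}b_n(y^n-\alpha^n)\;=\;(y-\alpha)\cdot\sum_{n\ge 1}b_n\sum_{k=0}^{n-1}y^k\alpha^{n-1-k}.
\]
The factorization requires a convergence check for the second factor in $R\dbl y\dbr$: the coefficient of $y^k$ there is the infinite sum $\sum_{n\ge k+1}b_n\alpha^{n-1-k}$, which converges in $R$ because $\alpha$ is topologically nilpotent. This identifies the fraction $\tfrac{z\otimes 1-1\otimes\psi(z)}{y\otimes 1-1\otimes\psi(y)}$ as a well-defined element of $R\dbl y\dbr$.

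Finally, to read off the residue modulo $y-\alpha$, I would substitute $y=\alpha$ into the second factor, obtaining $\sum_{n\ge 1}nb_n\alpha^{n-1}$, which is exactly $\psi\bigl(\tfrac{dz}{dy}\bigr)$ by the definition $\tfrac{dz}{dy}=\sum_{n\ge 1}nb_n y^{n-1}$ together with continuity of $\psi$. Since the algebraic input is the binomial-style factorization and the remaining work is routine topological bookkeeping, I do not expect a serious obstacle; the only care required is in justifying the convergence of the quotient series and the identification $1\otimes\psi(z)=\sum b_n\alpha^n$ in $R\dbl y\dbr$ under the implicit continuity of $\psi$.
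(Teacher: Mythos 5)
Your proof is correct and takes essentially the same route as the paper's: both factor $y^n-\psi(y)^n=(y-\psi(y))\sum_{\nu=0}^{n-1}y^\nu\psi(y)^{n-1-\nu}$, sum against the coefficients $b_n$, interchange the order of summation, and read off the residue modulo $y\otimes1-1\otimes\psi(y)$ as $\psi\bigl(\tfrac{dz}{dy}\bigr)$. A small stylistic remark: you are right that the identity $1\otimes\psi(z)=\sum_n 1\otimes\psi(b_n)\psi(y)^n$ (used equally, if tacitly, in the paper's first displayed equality) rests on $\psi$ being continuous for the $y$-adic topology, i.e.\ on $\psi(y)$ being topologically nilpotent, which holds in every application of the lemma; the paper softens the later convergence bookkeeping by writing the $\nu$-th coefficient of the quotient as $1\otimes\psi\bigl(\sum_{n\ge\nu+1}b_ny^{n-1-\nu}\bigr)$, applying $\psi$ to a series already convergent in $k\dbl y\dbr$, which you could adopt instead of summing $\sum_{n\ge\nu+1}b_n\psi(y)^{n-1-\nu}$ directly in $R$.
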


\begin{proof}
The lemma follows from the computation
\begin{eqnarray*}
z\otimes 1-1\otimes\psi(z) & = & \sum_{n=0}^{\infty}\bigl(b_n y^n\otimes 1-1\otimes\psi(b_n)\psi(y)^n\bigr) \\
& = & \sum_{n=1}^\infty \bigl(1\otimes\psi(b_n)\bigr)\cdot\sum_{\nu=0}^{n-1}\bigl(y^\nu\otimes\psi(y)^{n-1-\nu}\bigr)\cdot\bigl(y\otimes 1-1\otimes\psi(y)\bigr)\\[2mm]
& = & \bigl(y\otimes 1-1\otimes\psi(y)\bigr)\cdot\sum_{\nu=0}^\infty y^\nu\otimes\psi\Bigl(\sum_{n=\nu+1}^\infty b_n y^{n-1-\nu}\Bigr)\,,
\end{eqnarray*}
where the second factor converges in $k\dbl y\dbr\wh\otimes_{k,\psi}R$. Modulo $y\otimes 1-1\otimes\psi(y)$ this factor equals 
\begin{eqnarray*}
\sum_{n=1}^\infty \bigl(1\otimes\psi(b_n)\bigr)\cdot\sum_{\nu=0}^{n-1}\bigl(y^\nu\otimes\psi(y)^{n-1-\nu}\bigr) & = & \sum_{n=1}^\infty \bigl(1\otimes\psi(b_n)\bigr)\cdot n\bigl(1\otimes\psi(y)^{n-1}\bigr)\\
& = & 1\otimes \psi\Bigl(\sum_{n=1}^\infty n b_n y^{n-1}\Bigr)\\
& = & 1\otimes \psi\Bigl(\frac{dz}{dy}\Bigr)\,.
\end{eqnarray*}

\end{proof}

We need the following well known fact from field theory. For the convenience of the reader we include a proof.

\begin{Lemma}\label{LemmaInsepExt}
Let $E$ be a finite field extension of $Q$ (or of $Q_v$) of inseparability degree $p^m$. Then the separable closure $E'$ of $Q$ (resp.\ of $Q_v$) in $E$ equals $E^{p^m}:=\{x^{p^m}\colon x\in E\}$. If $y$ is a uniformizing parameter at a place $\tilde v$ of $E$ then $y':=y^{p^m}$ is a uniformizing parameter at the place $\tilde v'$ of $E'$ below $\tilde v$ and $E=E'(y)=E'[X]/(X^{p^m}-y')$.
\end{Lemma}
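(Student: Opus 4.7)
The plan is first to establish $E^{p^m}\subseteq E'$ as an immediate consequence of the definition of the inseparability degree, then upgrade this to equality by a dimension count, and finally to read off the uniformizer statement from a look at completions.

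Since $E'$ is the separable closure of the base field in $E$, the extension $E/E'$ is purely inseparable of degree $p^m$, so every $x\in E$ satisfies $x^{p^m}\in E'$, giving $E^{p^m}\subseteq E'$. To prove equality I would show $[E:E^{p^m}]=p^m$; since $[E:E']=p^m$, this forces $E^{p^m}=E'$. For this dimension count I would split into the two cases. In the $Q_v$-case, $E$ is a complete discretely valued field of characteristic $p$ with finite (hence perfect) residue field $\BF_{\tilde v}$ and uniformizer $y$, so $E=\BF_{\tilde v}\dpl y\dpr$; because $\BF_{\tilde v}$ is perfect, raising to the $p^m$-th power gives $E^{p^m}=\BF_{\tilde v}\dpl y^{p^m}\dpr$, and $\{1,y,\ldots,y^{p^m-1}\}$ is visibly a basis of $E$ over $E^{p^m}$. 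In the $Q$-case, $E$ is a function field in one variable over a finite field, hence finitely generated of transcendence degree $1$ over $\BF_p$; the standard imperfection-degree computation then yields $[E:E^p]=p$, and iterating (using that Frobenius induces an isomorphism $E\isoto E^{p^i}$) gives $[E:E^{p^m}]=p^m$.

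For the uniformizer statement, since $y^{p^m}\in E^{p^m}=E'$, I only need to verify $v_{\tilde v'}(y^{p^m})=1$. Passing to completions $E_{\tilde v}\supseteq E'_{\tilde v'}$, the extension stays purely inseparable of degree $p^m$; because finite residue fields are perfect, any purely inseparable residue extension must be trivial, so the ramification index of $\tilde v/\tilde v'$ equals $p^m$. Hence $v_{\tilde v'}(y^{p^m})=v_{\tilde v}(y^{p^m})/p^m=1$, so $y'=y^{p^m}$ is a uniformizer at $\tilde v'$. Finally, $y$ is a root of $X^{p^m}-y'\in E'[X]$, a polynomial of degree $p^m=[E:E']$, which must therefore be the minimal polynomial of $y$ over $E'$, so $E=E'(y)=E'[X]/(X^{p^m}-y')$. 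The only mildly delicate point is the claim that the residue extension is trivial in the global case, but the perfect-residue-field observation handles this cleanly, so there is no serious obstacle.
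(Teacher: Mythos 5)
Your proof is essentially correct and follows a somewhat different route from the paper's in two places. For the inclusion $E^{p^m}\subseteq E'$ you use the clean general fact that in a purely inseparable extension of degree $p^m$ every element $x$ has minimal polynomial of the form $X^{p^{e}}-a$ with $p^{e}\mid p^m$, so $x^{p^m}\in E'$; the paper instead looks at the minimal polynomial of $y$ over $E'$, shows the ``separable part'' has degree~$1$, concludes $y^{p^m}\in E'$, and (in the local case) bootstraps this to $E^{p^m}\subseteq E'$ through completeness of $E'$ and the description $E^{p^m}=k\dpl y^{p^m}\dpr$. Your version is arguably simpler and sidesteps the completeness subtlety. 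For the uniformizer you pass to completions and compute the ramification index (using that a purely inseparable extension has a unique place above $\tilde v'$, so $[E_{\tilde v}:E'_{\tilde v'}]=p^m$, and that finite residue fields are perfect so $f=1$), whereas the paper argues more directly that the $p^m$-power Frobenius carries $\CO_{E,\tilde v}$ isomorphically onto $\CO_{E',\tilde v'}$, so $y'$ is tautologically a uniformizer. Both work; yours is the more ``structural'' ramification-theory argument, the paper's is quicker once one identifies $E'=E^{p^m}$.

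There is, however, a small gap in your last sentence. From ``$y$ is a root of $X^{p^m}-y'$, which has degree $p^m=[E:E']$'' you cannot immediately conclude that $X^{p^m}-y'$ is the minimal polynomial: the minimal polynomial of $y$ has degree $[E'(y):E']$, and without further argument this is only $\leq p^m$; you would be assuming $E'(y)=E$, which is precisely what you want to prove. The fix is exactly the one the paper uses: since $y'$ is a uniformizer of $\CO_{E',\tilde v'}$ (which you have just established), the polynomial $X^{p^m}-y'\in\CO_{E',\tilde v'}[X]$ is Eisenstein at $\tilde v'$, hence irreducible over $E'$; then $[E'(y):E']=p^m=[E:E']$ forces $E'(y)=E$. (Alternatively you could invoke your ramification computation to get $[E'(y):E']\geq e(\tilde v/\tilde v')=p^m$, but Eisenstein is the cleanest route and is what the paper does.)
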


\begin{proof}
This is due to the fact that $Q$ has transcendence degree one over $\BF_q$, respectively that $Q_v$ is a discretely valued field. Namely, consider the case for $Q_v$. Then $E=k\dpl y\dpr$ where $k$ is the finite residue field of $E$. Clearly $E^{p^m}=k\dpl y'\dpr$ and $E=E^{p^m}(y)=E^{p^m}[X]/(X^{p^m}-y')$, because $X^{p^m}-y'$ is irreducible in $E^{p^m}[X]$ by Eisenstein. In particular $[E:E^{p^m}]=p^m$. On the other hand, the minimal polynomial $f(X)$ of $y$ over $E'$ is of the form $g(X^{p^{m'}})$ for a separable, irreducible polynomial $g$ over $E'$ and an integer $m'\ge0$. Therefore the minimal polynomial of $y^{p^{m'}}$ over $E'$ is $g$ and $y^{p^{m'}}$ is separable over $E'$. This implies $y^{p^{m'}}\in E'$ and $\deg g=1$, whence $\deg f=p^{m'}\le p^m$. Therefore $m'\le m$ and $y^{p^m}\in E'$, and hence $E^{p^m}\subset E'$. Since $[E:E']=p^m=[E:E^{p^m}]$ it follows that $E'=E^{p^m}$. This proves the lemma for $Q_v$.

For $Q$ a proof for the equality $E'=E^{p^m}$ can be found for example in \cite[Chapter~II, Corollary~2.12]{Silverman1}. If $\CO_{E,\tilde v}$ is the valuation ring of $E$ at $\tilde v$ then $(\CO_{E,\tilde v})^{p^m}$ equals the valuation ring $\CO_{E',\tilde v'}$ of $E'$ at $\tilde v'$ and so $y'$ is a uniformizing parameter of $\CO_{E',\tilde v'}$. The last equality follows from the fact that the polynomial $X^{p^m}-y'\in\CO_{E',\tilde v'}[X]$ is irreducible by Eisenstein.
\end{proof}

In the next lemma we consider the embeddings $Q\into K\dbl z_v-\zeta_v\dbr$ and $Q_v\into L\dbl z_v-\zeta_v\dbr$ given by $z_v\mapsto z_v=\zeta_v+(z_v-\zeta_v)$.

\begin{Lemma}\label{LemDecompdR}
Let $E=E_1\times\ldots\times E_s$ be a product of finite field extensions of $Q$ and let $K\subset Q^\alg$ be a field extension of $Q$ with $\psi(E)\subset K$ for all $\psi\in H_E:=\Hom_Q(E,Q^\alg)$. Let $i(\psi)$ be such that $\psi$ factors through $E\onto E_{i(\psi)}$ and let $y_{i(\psi)}\in E_{i(\psi)}$ be a uniformizing parameter at a place of $E_{i(\psi)}$ above $v$. Then
\begin{eqnarray*}
& & E\otimes_{Q} K\dbl z_v-\zeta_v\dbr\;=\; \prod_{\psi\in H_{E}}K\dbl y_{i(\psi)}-\psi(y_{i(\psi)})\dbr \qquad\text{and}\\
& & E\otimes_{Q} K\;=\; \prod_{\psi\in H_{E}}K\dbl y_{i(\psi)}-\psi(y_{i(\psi)})\dbr/\bigl(y_{i(\psi)}-\psi(y_{i(\psi)})\bigr)^{[E_{i(\psi)}\,:\,Q]_\insep},
\end{eqnarray*}
where $[E_{i(\psi)}:Q]_\insep$ is the inseparability degree of $E_{i(\psi)}$ over $Q$.

Likewise, let $E_v=E_{v,1}\times\ldots\times E_{v,s}$ be a product of finite field extensions of $Q_v$ and let $L\subset Q_v^\alg$ be a field extension of $Q_v$ with $\psi(E_v)\subset L$ for all $\psi\in H_{E_v}:=\Hom_{Q_v}(E_v,Q_v^\alg)$. Let $i(\psi)$ be such that $\psi$ factors through $E\onto E_{v,i(\psi)}$ and let $y_{i(\psi)}\in E_{v,i(\psi)}$ be a uniformizing parameter. Then
\begin{eqnarray}
\label{EqDecomp1} & & E_v\otimes_{Q_v} L\dbl z_v-\zeta_v\dbr\;=\; \prod_{\psi\in H_{E_v}}L\dbl y_{i(\psi)}-\psi(y_{i(\psi)})\dbr \qquad\text{and}\\
\label{EqDecomp2} & & E_v\otimes_{Q_v} L\;=\; \prod_{\psi\in H_{E_v}}L\dbl y_{i(\psi)}-\psi(y_{i(\psi)})\dbr/\bigl(y_{i(\psi)}-\psi(y_{i(\psi)})\bigr)^{[E_{v,i(\psi)}\,:\,Q_v]_\insep},
\end{eqnarray}
where $[E_{v,i(\psi)}:Q_v]_\insep$ is the inseparability degree of $E_{v,i(\psi)}$ over $Q_v$.
\end{Lemma}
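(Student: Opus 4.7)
The global and local statements are proved identically; I describe the global case. Using $E = \prod_i E_i$ and the distributivity of tensor products over direct products, both sides split compatibly with $H_E = \bigsqcup_i H_{E_i}$, reducing me to the case where $E$ is a field. By Lemma~\ref{LemmaInsepExt}, I write $E = E'[X]/(X^{p^m}-y')$, where $E'/Q$ is the maximal separable subextension, $p^m$ is the inseparability degree, and $y'=y^{p^m}$ is a uniformizer of $E'$ at the place $\tilde v'$ of $E'$ below the chosen place of $E$.

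I first treat the separable case $E = E'$. The hypothesis that $K$ contains $\psi'(E')$ for every $\psi'$, combined with separability of $E'/Q$, yields the étale decomposition $E' \otimes_Q K = \prod_{\psi' \in H_{E'}} K$ via $a \otimes b \mapsto (\psi'(a)b)_{\psi'}$. Since $K \to K\dbl z-\zeta\dbr$ is flat, base change gives
\[
E' \otimes_Q K\dbl z-\zeta\dbr \;=\; (E'\otimes_Q K)\otimes_K K\dbl z-\zeta\dbr \;=\; \prod_{\psi' \in H_{E'}} K\dbl z-\zeta\dbr.
\]
To upgrade each factor from $K\dbl z-\zeta\dbr$ to $K\dbl y'-\psi'(y')\dbr$, I need that $y' \otimes 1 - 1 \otimes \psi'(y')$ is a second uniformizer of this complete DVR. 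Writing $z = f(y') \in \BF_{\tilde v'}\dbl y'\dbr$ (with $f(0)=0$, since $\tilde v'$ lies over $v$) and applying Lemma~\ref{LemDerivative} shows that $(z - \zeta)/(y'\otimes 1 - 1\otimes\psi'(y'))$ is a unit with residue $\psi'(dz/dy')$, which is nonzero because $E'_{\tilde v'}/Q_v$ is separable. This delivers the desired identification.

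For the possibly inseparable $E$, I tensor up from $E'$ to $E$. In the $\psi'$-factor this yields $K\dbl y'-\psi'(y')\dbr[X]/(X^{p^m}-y')$. Since the map $H_E \to H_{E'}$ is a bijection in the purely inseparable tower $E/E'$, let $\psi$ be the unique lift of $\psi'$, so that $\psi(y)^{p^m}=\psi'(y')$. The substitution $t := X - \psi(y)$ rewrites the defining relation as $t^{p^m} = y'-\psi'(y')$, which is Eisenstein, so the factor equals $K\dbl t\dbr = K\dbl y-\psi(y)\dbr$. This proves the first formula. For the second, I note that $z-\zeta$ and $(y-\psi(y))^{p^m} = y' - \psi'(y')$ differ by a unit in each factor, so passing to the quotient by $(z-\zeta)$ kills $(y-\psi(y))^{p^m}$, yielding the stated power. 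The main obstacle is the separable step: the identification cannot be arranged by Hensel's lemma alone (since $K$ carries no natural topology in the global setting) and genuinely requires the algebraic input of Lemma~\ref{LemDerivative}.
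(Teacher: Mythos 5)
Your outline follows the same skeleton as the paper's proof (reduce to a field factor, split off the separable subextension, identify factors with $K\dbl y'-\psi'(y')\dbr$ via Lemma~\ref{LemDerivative}, then tensor up the purely inseparable extension), but the key step that produces the initial product decomposition is wrong.

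You write
\[
E' \otimes_Q K\dbl z-\zeta\dbr \;=\; (E'\otimes_Q K)\otimes_K K\dbl z-\zeta\dbr
\]
by ``flatness of $K\to K\dbl z-\zeta\dbr$''. This equality would require the $Q$-algebra structure on $K\dbl z-\zeta\dbr$ to factor through $K$, but it does not: the lemma's $Q\into K\dbl z-\zeta\dbr$ sends $z_v\mapsto z_v=\zeta_v+(z_v-\zeta_v)$, whereas the composite $Q\into K\into K\dbl z-\zeta\dbr$ sends $z_v\mapsto \zeta_v$. These are different $Q$-algebra structures, so the base-change identity fails. The symptom is concrete: under the composite structure one has $y'\otimes 1 = 1\otimes\psi'(y')$ in each factor, so the element $y'\otimes1-1\otimes\psi'(y')$ that you then need to be a uniformizer (via Lemma~\ref{LemDerivative}) is identically zero, and the whole rest of your argument collapses. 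The correct way to split $E'\otimes_Q K\dbl z-\zeta\dbr=K\dbl z-\zeta\dbr[X]/(g)$ is Hensel's lemma: $K\dbl z-\zeta\dbr$ is a complete local ring with residue field $K$, and the separable $g$ splits into distinct linear factors modulo $(z-\zeta)$; this factorization lifts. Your closing remark that the step ``cannot be arranged by Hensel's lemma alone'' is therefore precisely backwards --- Hensel's lemma (which needs no topology on $K$, only completeness of $K\dbl z-\zeta\dbr$) is exactly what replaces your flatness step, and it is what the paper uses. Once that is fixed, the Lemma~\ref{LemDerivative} identification and the inseparable tensor-up go through as you describe.
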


\begin{proof}
Fix a $\psi$, set $i:=i(\psi)$ and let $E'_i$, respectively $E'_{v,i}$ be the separable closure of $Q$ in $E_i$, respectively of $Q_v$ in $E_{v,i}$. Then $H_{E_i}=H_{E'_i}$, respectively $H_{E_{v,i}}=H_{E'_{v,i}}$, and
\begin{equation}\label{EqLemmaDecompdR1}
\TS E'_i\otimes_Q K \;\isoto\; \prod\limits_{\psi\in H_{E_i}} K\,, \qquad\text{respectively}\qquad E'_{v,i}\otimes_{Q_v} L \;\isoto\; \prod\limits_{\psi\in H_{E_{v,i}}} L\,.
\end{equation}
Let $p^m:=[E_i:Q]_\insep=[E_i:E'_i]$, respectively $p^m:=[E_{v,i}:Q_v]_\insep=[E_{v,i}:E'_{v,i}]$, and let $y'_i:=y_i^{p^m}$. Then Lemma~\ref{LemmaInsepExt} implies that $y'_i\in E'_i$ is a uniformizing parameter at a place above $v$. By Hensel's lemma the decompositions~\eqref{EqLemmaDecompdR1} extend to decompositions
\begin{eqnarray*}
E'_i\otimes_{Q}K\dbl z_v-\zeta_v\dbr & \isoto & \TS\prod\limits_{\psi\in H_{E_i}} K\dbl z_v-\zeta_v\dbr \es=\es \TS\prod\limits_{\psi\in H_{E_i}} K\dbl y'_i-\psi(y'_i)\dbr\,,\quad\text{respectively} \\
E'_{v,i}\otimes_{Q_v}L\dbl z_v-\zeta_v\dbr & \isoto & \TS\prod\limits_{\psi\in H_{E_{v,i}}} L\dbl z_v-\zeta_v\dbr \es=\es \TS\prod\limits_{\psi\in H_{E_{v,i}}} L\dbl y'_i-\psi(y'_i)\dbr\,.
\end{eqnarray*}
Here the last identifications in each line follow from \cite[Lemma~1.2 and 1.3]{HartlJuschka} which states that both $K\dbl z_v-\zeta_v\dbr$ and $K\dbl y'_i-\psi(y'_i)\dbr$ are canonically isomorphic to the completion of the ring $\CO_{E_i'}\otimes_{\BF_q}K$ at the ideal $(a\otimes 1-1\otimes\psi(a)\colon a\in\CO_{E_i'})$. The identification in the second line also follows from Lemma~\ref{LemDerivative} by observing that the derivative $\tfrac{dz_v}{dy'_i}$ equals $-\tfrac{\partial}{\partial Y'_i}m(z_v,Y'_i)\big/\tfrac{\partial}{\partial z_v}m(z_v,Y'_i)\big|_{Y'_i=y'_i}$ where $m(z_v,Y'_i)\in \BF_v\dbl z_v\dbr[Y'_i]$ is the minimal polynomial of $y'_i$ over $Q_v$, and hence $\psi\bigl(\tfrac{dz_v}{dy'_i}\bigr)$ is non-zero by the separability of $y'_i$ over $Q_v$, and the injectivity of $\psi$ on $E_{v,i}$. Now $E_i=E'_i(y_i)$, respectively $E_{v,i}=E'_{v,i}(y_i)$, and hence $E_i\otimes_{E'_i}K\dbl y'_i-\psi(y'_i)\dbr=K\dbl y'_i-\psi(y'_i)\dbr[y_i-\psi(y_i)]=K\dbl y_i-\psi(y_i)\dbr$, respectively $E_{v,i}\otimes_{E'_{v,i}}L\dbl y'_i-\psi(y'_i)\dbr=L\dbl y'_i-\psi(y'_i)\dbr[y_i-\psi(y_i)]=L\dbl y_i-\psi(y_i)\dbr$ with $\bigl(y_i-\psi(y_i)\bigr)^{p^m}=y'_i-\psi(y'_i)$. Since $H_E=\bigcup_i H_{E_i}$, respectively $H_{E_v}=\bigcup_i H_{E_{v,i}}$, the lemma follows.
\end{proof}

% End of main text which is included in the arXiv version but not in the erratum submitted to J. Math. Inst. Jussieu
}

\section{Erratum}\label{SectErratum}
\setcounter{equation}{0}

\subsection{First Error} \label{Erratum1}

In \PublOrArXiv{\cite[Formulas~(1.13) and (1.12) and Definition~5.21]{HartlSingh2}}
{\eqref{EqValuationOfOmegaPsi} and \eqref{EqVPsi} and Definition~\ref{DefVPsiLocSht}} 
it is claimed that $v(\omega_\psi)$ and $v_\psi(u)$ are integers. However, in general they only lie in the rational numbers $\BQ$, because the valuation $v$ is normalized to be an isomorphism $v\colon Q_v\mal/A_v\mal\isoto\BZ$, but the arguments of $v$ in both formulas lie in $Q_v^\alg$ instead of $Q_v$. 

This error is harmless, as the integrality of $v_\psi(u)$ and $v(\omega_\psi)$ is nowhere used.

\subsection{Second Error} \label{Erratum2}

In \PublOrArXiv{\cite[Formula~(1.13) and Definition~4.10]{HartlSingh2}}
{Formula~\eqref{EqValuationOfOmegaPsi} and Definition~\ref{DefValuationOfomega}} 
there is an \emph{error in the definition} of $v(\omega_\psi)$. 

As in most of \PublOrArXiv{\cite{HartlSingh2}}{the main text} 
we fix a finite separable semi-simple $Q$-algebra $E$. That is, $E$ is a product of finite separable field extensions of $Q$. We fix a finite place $v$ of $Q$ and consider the decomposition of the separable $Q_v$-algebra $E_v:=E\otimes_Q Q_v = E_{v,1}\times \cdots \times E_{v,s}$ into a product of finite field extensions $E_{v,i}$ of $Q_v$ as after \PublOrArXiv{\cite[Definition~4.1]{HartlSingh2}}{Definition~\ref{DefCME}}. We fix a finite Galois extension $K\subset Q^\alg$ of $Q$ and we let $L:=K_v\subset Q_v^\alg$ be the closure of $K$. It is a finite Galois extension of $Q_v$. We fix a $\psi\in H_E$. The canonical extension $\psi\otimes\id_{Q_v}\colon E_v\to L$ will be denoted again by $\psi$ and factors through the quotient $E_{v,i(\psi)}$ of $E_v$; see \PublOrArXiv{\cite[Definition~4.5]{HartlSingh2}}{Definition~\ref{DefIOfPsi}}.

Let $\ulHM{}$ be a local shtuka over $R:=\CO_{L}$ with complex multiplication by $\CO_{E_v}$ as in \PublOrArXiv{\cite[Definition~4.3]{HartlSingh2}}{Definition~\ref{DefCMOE}}. It may arise from a good model $\ul\CM$ of an $A$-motive over $R$ as in \PublOrArXiv{\cite[Example~3.2]{HartlSingh2}}{Example~\ref{AMotLocSht}}. We consider the one-dimensional $L$-vector space 
\begin{eqnarray}
\Koh^{\psi}(\ulHM{},L) & := & \bigl\{\omega\in\Koh^1_\dR(\ulHM{},L)\colon [a]^*\omega=\psi(a)\cdot\omega\es\forall\;a\in\CO_{E_v}\bigr\} \nonumber \\[2mm]
& \isoto & \Koh^1_\dR(\ulHM{},L)\big/([a]^*-\psi(a)\colon a\in\CO_{E_v})\cdot\Koh^1_\dR(\ulHM{},L)\,, \label{EqErratum1}
\end{eqnarray}
where the isomorphism comes from \PublOrArXiv{\cite[Proposition~4.9]{HartlSingh2}}{Proposition~\ref{PropCMTateMod}} using that $E$ is separable over $Q$.

In \PublOrArXiv{\cite[Formula~(1.13) and Definition~4.10]{HartlSingh2}}
{Formula~\eqref{EqValuationOfOmegaPsi} and Definition~\ref{DefValuationOfomega}} 
there is an \emph{error in the definition} of $v(\omega_\psi)$ for $L\dbl y_{i(\psi)}-\psi(y_{i(\psi)})\dbr$-generators $\omega_\psi$ of $\Koh^{\psi}(\ulHM{},L\dbl y_{i(\psi)}-\psi(y_{i(\psi)})\dbr)$. Namely, there as reference integral structure on the $L$-vector space $\Koh^{\psi}(\ulHM{},L)=\Koh^{\psi}(\ulHM{},L\dbl y_{i(\psi)}-\psi(y_{i(\psi)})\dbr)\big/\bigl(y_{i(\psi)}-\psi(y_{i(\psi)})\bigr)$ the $R$-module 
\[
\wt\Koh{}^{\psi}(\ulHM{},R)\;:=\;\bigl\{\omega\in\Koh^1_\dR(\ulHM{},R)\colon [a]^*\omega=\psi(a)\cdot\omega\es\forall\;a\in\CO_{E_v}\bigr\}
\]
was used (which was denoted without the $\wt{~}$ on $\wt\Koh$ in \PublOrArXiv{\cite[Formula~(1.13) and Definition~4.10]{HartlSingh2})}{\eqref{EqValuationOfOmegaPsi} and Definition~\ref{DefValuationOfomega})}.
Then $v(\omega_\psi)$ was defined to be 
\begin{equation}\label{EqFalse_v(omega)}
v^\sim(\omega_\psi)\;:=\;v(\tilde x)\;\in\;\BQ\,,
\end{equation}
where $\tilde x\in L\mal$ satisfies that $\tilde x^{-1}\bigl(\omega_\psi\mod y_{i(\psi)}-\psi(y_{i(\psi)})\bigr)$ is an $R$-generator of $\wt\Koh{}^{\psi}(\ulHM{},R)$. (To clarify the error we write $v^\sim(\omega_\psi)$ instead of $v(\omega_\psi)$ in this erratum.)

However, in the rest of \PublOrArXiv{\cite{HartlSingh2}}{the main text} the $R$-submodule 
\[
\Koh^{\psi}(\ulHM{},R)\;:=\;\Koh^1_\dR(\ulHM{},R)\big/([a]^*-\psi(a)\colon a\in\CO_{E_v})\cdot\Koh^1_\dR(\ulHM{},R)\;\subset\;\Koh^{\psi}(\ulHM{},L) 
\]
is used as reference integral structure on $\Koh^{\psi}(\ulHM{},L)$. (See Lemma~\ref{LemmaCompIntStr} below for why the latter is an inclusion, and how the two integral structures can be compared.) Correspondingly, the following definition for $v(\omega_\psi)$ is used in \PublOrArXiv{\cite{HartlSingh2}}{the main text}. 
\begin{equation} \label{EqCorrect_v(omega)}
v(\omega_\psi)\;:=\;v(x)\;\in\;\BQ\,,
\end{equation}
where $x\in L\mal$ satisfies that $x^{-1}\bigl(\omega_\psi\mod y_{i(\psi)}-\psi(y_{i(\psi)})\bigr)$ is an $R$-generator of $\Koh^{\psi}(\ulHM{},R)$. Indeed, in \PublOrArXiv{\cite[5.12]{HartlSingh2}}{\ref{Point4.12}} the generator $\omega_\psi\open:=1$ of $\Koh^{\psi}(\ulHM{},R)$ is used, which might not lie in $\wt\Koh{}^{\psi}(\ulHM{},R)$. Afterwards, any other generator $\omega_\psi$ is compared to the generator $\omega_\psi\open$. This error occurs in \PublOrArXiv{\cite[Theorems~1.3 and 5.24 and in Corollaries~5.22 and 5.25]{HartlSingh2}}{Theorems~\ref{MainThm} and \ref{ThmDMPeriod} and in Corollaries~\ref{CorLubintateperiod} and \ref{CorDMPeriod}}. In terms of the valuation $v^\sim(\omega_\psi)$ from \eqref{EqFalse_v(omega)}, all these theorems and corollaries have to be reformulated as explained below. However, with the definition of $v(\omega_\psi)$ in \eqref{EqCorrect_v(omega)} above, all these theorems and corollaries are correct. 

Note that if $\ulHM=\ulHM_v(\ul\CM)$ arises from a good model $\ul\CM$ of an $A$-motive over $R$ as in \PublOrArXiv{\cite[Example~3.2]{HartlSingh2}}{Example~\ref{AMotLocSht}}, then $\Koh^1_\dR(\ulHM,R)=\Koh^1_\dR(\ul\CM,R):=\sigma^*\CM\otimes_{A_{R},\,\gamma\otimes\id_{R}}R$, and hence
\begin{alignat*}{4}
& \wt\Koh{}^{\psi}(\ulHM,R) \;=\; && \wt\Koh{}^{\psi}(\ul\CM,R) & \;:=\; & \bigl\{\omega\in\Koh^1_\dR(\ul\CM,R)\colon [a]^*\omega=\psi(a)\cdot\omega\es\forall\;a\in\CO_E\bigr\}\,,\\[2mm]
& \Koh^{\psi}(\ulHM{},R) \;=\; && \Koh^{\psi}(\ul\CM,R) & \;:=\; & \Koh^1_\dR(\ul\CM,R)\big/([a]^*-\psi(a)\colon a\in\CO_E)\cdot\Koh^1_\dR(\ul\CM,R)\\[2mm]
& \text{inside} && \Koh^{\psi}(\ulHM,L) & \;=\; & \Koh^{\psi}(\ul\CM,L) \;=\;\wt\Koh{}^{\psi}(\ul\CM,R)\otimes_R L \;=\;\Koh{}^{\psi}(\ul\CM,R)\otimes_R L\,.
\end{alignat*}

We next show how the two integral structures can be compared.

\begin{Lemma}\label{LemmaCompIntStr}
The integral structures $\wt\Koh{}^{\psi}(\ulHM{},R)$ and $\Koh^{\psi}(\ulHM{},R)$ are free $R$-modules of rank one and contained in the $L$-vector space $\Koh^{\psi}(\ulHM{},L)$. The natural $R$-morphism
\[
\wt\Koh{}^{\psi}(\ulHM{},R) \;\longinto\;\Koh^1_\dR(\ulHM{},R)\;\longonto\;\Koh^{\psi}(\ulHM{},R)
\]
is injective with cokernel isomorphic to $R/R\cdot\psi(\FD_{E_v/Q_v})$, where $\FD_{E_v/Q_v}$ is the different of $E_v=\prod_{i=1}^s E_{v,i}$ over $Q_v$.
\end{Lemma}

\begin{proof}
The morphism fits into the following diagram
\begin{equation}\label{EqLemmaCompIntStr1}
\xymatrix @R=1pc {
\wt\Koh{}^{\psi}(\ulHM{},R) \ar@{^{ (}->}[dd] \ar@{^{ (}->}[r] & \Koh^1_\dR(\ulHM{},R) \ar@{^{ (}->}[d] \ar@{->>}[r] & \Koh^{\psi}(\ulHM{},R)\ar@{^{ (}->}^{\TS ?}[dd] \\
& \Koh^1_\dR(\ulHM{},L) \ar@{->>}[rd] \\
\Koh^{\psi}(\ulHM{},L) \ar@{^{ (}->}[ru] \ar[rr]^{\TS\sim} & & \Koh^{\psi}(\ulHM{},L)
}
\end{equation}
in which the lower isomorphism was described in \eqref{EqErratum1}, the lower triangle is the tensor product of the upper row with $L$, and the injectivity of the right vertical arrow still has to be proved. Note that the argument will not use the specific situation of de Rham cohomology of local shtukas. It will only use the isomorphism \eqref{EqErratum1} comming from \PublOrArXiv{\cite[Proposition~4.9]{HartlSingh2}}{Proposition~\ref{PropCMTateMod}} and the freeness of the $R$-module $\Koh^1_\dR(\ulHM,R)$ over $\CO_{E,v}\otimes_{A_v} R$, see below.

The $Q_v$-algebra $E_v$ acts on $\Koh^{\psi}(\ulHM{},L)$ through the character $\psi\colon E_v\onto E_{v,i(\psi)}\into L$. By \cite[\S\,III.6, Proposition~12]{SerreLF} there exists an element $y\in\CO_{E_{v,i(\psi)}}$ such that $\CO_{E_{v,i(\psi)}}=A_v[y]=A_v[Y]/(m)$ where $m\in A_v[Y]$ is the minimal polynomial of $y$ over $A_v$. The image $\gamma(m)$ under the map $\gamma\colon A_v[Y] \into R[Y]$ has $\psi(y)$ as a zero and correspondingly factors as
\[
\gamma(m) \;=\; \bigl(Y-\psi(y)\bigr)\cdot g(Y)
\]
for a monic polynomial $g(Y)\in R[Y]$. The derivative $m':=\tfrac{dm}{dY}\in A_v[Y]$ satisfies 
\begin{equation} \label{EqErratum2} 
\psi\bigl(m'(y)\bigr) \;=\; \gamma(m)'\bigl(\psi(y)\bigr) \;=\; g\bigl(\psi(y)\bigr)\,.
\end{equation}
Recall that $A_{v, R}$ is the $v$-adic completion of $A_R$. By \PublOrArXiv{\cite[Proposition~4.8]{HartlSingh2}}{Proposition~\ref{PropFree}} we can decompose $\ulHM=\bigoplus_{i=1}^s\ulHM_i$ into local shtukas $\ulHM_i$ over $R$ with complex multiplication by $\CO_{E_{v,i}}$. In particular, 
\begin{eqnarray*}
\wt\Koh{}^{\psi}(\ulHM{},R) & := & \bigl\{\omega\in\Koh^1_\dR(\ulHM_{i(\psi)},R)\colon [a]^*\omega=\psi(a)\cdot\omega\es\forall\;a\in\CO_{E_{v,i}}\bigr\} \qquad\text{and}\\[2mm]
\Koh^{\psi}(\ulHM{},R) & := & \Koh^1_\dR(\ulHM_{i(\psi)},R)\big/([a]^*-\psi(a)\colon a\in\CO_{E_{v,i}})\cdot\Koh^1_\dR(\ulHM_{i(\psi)},R)
\end{eqnarray*}
can be computed from 
\[
\Koh^1_\dR(\ulHM_{i(\psi)},R)\;:=\;\sigma^*\hat{M}_{i(\psi)}\otimes_{A_{v,R},\;\gamma\otimes\id_R} R
\]
instead of $\Koh^1_\dR(\ulHM,R)$. Moreover, by loc.\ cit.\ $\ulHM$ is free over $\CO_{E_v,R}:=\CO_{E_v}\otimes_{A_v}R\dbl z\dbr=\CO_{E_v}\wh\otimes_{\BF_v}R$ of rank one, and we may choose a generator of $\hat M$. This generator provides an isomorphism
\[
\Koh^1_\dR(\ulHM_{i(\psi)},R)\;\cong\;(\CO_{E_{v,i(\psi)}}\wh\otimes_{\BF_q}R)\underset{A_{v,R},\;\gamma\otimes\id_R}{\otimes} R\;=\;\CO_{E_{v,i(\psi)}}\underset{A_v,\gamma}{\otimes} R %\;=\;A_v[Y]/(m)\underset{A_v,\gamma}{\otimes} R 
\;=\;R[Y]/(\gamma(m))\,.
\]
Since $[a]^*-\psi(a)=[a]^*-\gamma(a)$ for $a\in A_v$ already annihilates $\Koh^1_\dR(\ulHM,R)$, this yields the upper vertical isomorphisms in the following diagram
\[
\xymatrix{ 
\wt\Koh{}^{\psi}(\ulHM{},R) \ar[d]^-{\TS\cong} \ar@{^{ (}->}[r] & \Koh^{\psi}(\ulHM{},R) \ar[d]^-{\TS\cong} \\
\bigl\{\,f\in \CO_{E_{v,i(\psi)}}\otimes_{A_v,\gamma}R\colon (y\otimes1-1\otimes\psi(y))\cdot f=0\,\bigr\} \ar@{=}[d] \ar@{^{ (}->}[r] & (\CO_{E_{v,i(\psi)}}\otimes_{A_v,\gamma}R) \big/(y\otimes1-1\otimes\psi(y)) \ar@{=}[d] \\
\bigl\{\,f\in R[Y]/(\gamma(m))\colon (Y-\psi(y))\cdot f=0\,\bigr\} \ar@{^{ (}->}[r] \ar@{=}[d] & R[Y]/\bigl(\gamma(m),Y-\psi(y)\bigr) \ar@{=}[d] \\
g(Y)\cdot R[Y]/(\gamma(m)) \ar@{^{ (}->}[r] & R[Y]/(Y-\psi(y))
}
\]
The injectivity of the horizontal maps follows from diagram~\eqref{EqLemmaCompIntStr1}. The lower left equality holds because $R[Y]$ has no $(Y-\psi(y))$-torsion. Next, $\Koh^{\psi}(\ulHM{},R)\cong R[Y]/(Y-\psi(y))\cong R$ is free, and hence contained in $\Koh^{\psi}(\ulHM{},R)\otimes_RL=\Koh^{\psi}(\ulHM{},L)$. Finally, the image of the lower horizontal map is the ideal 
\[
R\cdot g\bigl(\psi(y)\bigr) \;=\; R\cdot \psi\bigl(m'(y)\bigr) \;=\; R\cdot\psi(\FD_{E_{v,i(\psi)}/Q_v}) \;=\; R\cdot\psi(\FD_{E_v/Q_v})\,;
\]
 see \cite[\S\,III.4, Proposition~10 and \S\,III.6, Corollary~2]{SerreLF}.
\end{proof}

\begin{Corollary}\label{CorCompIntStr}
For an $L$-generator $\omega_\psi$ of $\Koh^\psi(\ulHM,L)$, the two valuations \eqref{EqFalse_v(omega)} and \eqref{EqCorrect_v(omega)} satisfy
\[
v(\omega_\psi)-v^\sim(\omega_\psi)\;=\;v(\FD_{\psi(E_v)/Q_v})\;=\;v\bigl(\psi(\FD_{E_v/Q_v})\bigr)\;=\;v\bigl(\psi(\FD_{E/Q})\bigr)\,.
\]
\end{Corollary}

\begin{proof}
Let $x,\tilde x\in L\mal$ be elements such that $x^{-1}\omega_\psi$ is an $R$-generator of $\Koh^\psi(\ulHM,R)$ and $\tilde x^{-1}\omega_\psi$ is an $R$-generator of $\wt\Koh{}^\psi(\ulHM,R)$. Then $x/\tilde x$ is an $R$-generator of $\psi(\FD_{E_v/Q_v})$ by Lemma~\ref{LemmaCompIntStr} and the corollary follows.
\end{proof}

Now let $\ulM$ be an $A$-motive over a finite Galois extension $K\subset Q^\alg$ of $Q$ with complex multiplication by a finite separable semi-simple $Q$-algebra $E$. Assume that $\psi(E)\subset K$ for all $\psi\in H_E$. Fix a $\psi\in H_E$ and let $\omega_\psi$ be a generator of the $K\dbl y_{\psi}-\psi(y_{\psi})\dbr$-module $\Koh^{\psi}(\ulM^\eta,K\dbl y_{\psi}-\psi(y_{\psi})\dbr)$. For every $\eta\in H_K$ let $\ulM^\eta$ and $\omega_\psi^\eta\in\Koh^{\eta\psi}(\ulM^\eta,K\dbl y_{\eta\psi}-\eta\psi(y_{\eta\psi})\dbr)$ be obtained by extension of scalars via $\eta$. With the corollary and the computation
\begin{align}
\nonumber \TS\sum\limits_{\eta\in H_K} v(\omega_\psi^\eta)-v^\sim(\omega_\psi^\eta) & \;=\; \TS\sum\limits_{\eta\in H_K} v\bigl(\eta\psi(\FD_{E/Q})\bigr) \;=\;\TS v\Bigl(\,\prod\limits_{\eta\in H_K}\eta\psi(\FD_{E/Q})\Bigr)\;=\;v\bigl(N_{K/Q}(\FD_{\psi(E)/Q})\bigr)\\[1mm]
\nonumber &\;=\;v\Bigl(N_{\psi(E)/Q}\bigl(N_{K/\psi(E)}(\FD_{\psi(E)/Q})\bigr)\Bigr)\;=\;[K:\psi(E)]\cdot v(\Fd_{\psi(\CO_E)/A})
\end{align}
we obtain a reformulation of \PublOrArXiv{\cite[Theorems~1.3 and 5.24 and Corollaries~5.22 and 5.25]{HartlSingh2}}{Theorems~\ref{MainThm} and \ref{ThmDMPeriod} and Corollaries~\ref{CorLubintateperiod} and \ref{CorDMPeriod}} in terms of $v^\sim(\omega_\psi)$, which is even more analogous to \cite[Theorem~II.1.1(i)]{Colmez93}.

\bigskip\noindent
{\bfseries Theorem~1.3'.}
{\itshape
Let $\omega_\psi$ be a generator of the $K\dbl y_\psi-\psi(y_\psi)\dbr$-module $\Koh^\psi(\ulM,K\dbl y_\psi-\psi(y_\psi)\dbr)$. For every $\eta\in H_K$ let $\ulM^\eta$ and $\omega_\psi^\eta\in\Koh^{\eta\psi}(\ulM^\eta,K\dbl y_{\eta\psi}-\eta\psi(y_{\eta\psi})\dbr)$ be obtained by extension of scalars via $\eta$, and choose an $E$-generator $u_\eta\in\Koh_{1,\Betti}(\ulM^\eta,Q)$. Then for every place $v\ne\infty$ of $C$ we have
\[
\TS\tfrac{1}{\#H_K}\sum\limits_{\eta\in H_K}v({\TS\int_{u_\eta}\omega_\psi^\eta}) \;=\; Z_v(a^0_{E,\psi,\Phi},1)-\mu_{\Art,v}(a^0_{E,\psi,\Phi})+\tfrac{1}{\#H_K}\sum\limits_{\eta\in H_K}\bigl(v^\sim(\omega_\psi^\eta)+v_{\eta\psi}(u_\eta)\bigr)\,.
\qed
\]
}

\bigskip\noindent
{\bfseries Corollary~5.22'.}
{\itshape
Let $\phi,\psi\in H_{E_v}$ with $i(\phi)=i(\psi)=:i$ and assume that $E_{v,i}$ is separable over $Q_v$. Let $u\in\Koh_{1,v}(\ulHM_{E_v,\phi},Q_v)$ be a generator as $E_v$-module and let $\omega_\psi$ be an $L\dbl y_{i}-\psi(y_{i})\dbr$-generator of $\Koh^\psi(\ulHM_{E_v,\phi},L\dbl y_{i}-\psi(y_{i})\dbr)$. Then $\int_u\omega_\psi:=u\otimes\id_{\BC_v\dpl z-\zeta\dpr}\bigl(h^{-1}_{v,\dR}(\omega_\psi)\bigr)$ has valuation
\[
v\bigl({\TS\int_u}\omega_\psi\bigr) \;=\; Z_v(a_{E_v, \psi, \phi},1)- \mu_{\Art,v}(a_{E_v, \psi, \phi})+v^\sim(\omega_\psi)+v_\psi(u)\,,
\]
where $v^\sim(\omega_\psi)$ and $v_\psi(u)$ were defined in \eqref{EqFalse_v(omega)} and \PublOrArXiv{\cite[Definition~5.21]{HartlSingh2}}{Definition\ref{DefVPsiLocSht}}, respectively.
\qed
}

\bigskip\noindent
{\bfseries Theorem~5.24'.}
{\itshape
Let $\ulHM$ be a local shtuka over $R$ with complex multiplication by the ring of integers $\CO_{E_v}$ in a commutative, semi-simple, separable $Q_v$-algebra $E_v$ with local CM-type $\Phi$, and assume that $\psi(E_v)\subset L$ for all $\psi\in H_{E_v}$ and that $L$ is separable over $Q_v$. Let $u\in\Koh_{1,v}(\ulHM,Q_v)$ be an $E_v$-generator and let $\omega_\psi$ be an $L\dbl y_{i(\psi)}-\psi(y_{i(\psi)})\dbr$-generator of $\Koh^\psi(\ulHM,L\dbl y_{i(\psi)}-\psi(y_{i(\psi)})\dbr)$. Then the period $\int_u\omega_\psi:=u\otimes\id_{\BC_v\dpl z-\zeta\dpr}\bigl(h^{-1}_{v,\dR}(\omega_\psi)\bigr)$ has valuation
\[
v\bigl({\TS\int_u}\omega_\psi\bigr) \;=\; Z_v(a_{E_v, \psi, \Phi},1)- \mu_{\Art,v}(a_{E_v, \psi, \Phi})+v^\sim(\omega_\psi)+v_\psi(u)\,,
\]
where $v^\sim(\omega_\psi)$ and $v_\psi(u)$ were defined in \eqref{EqFalse_v(omega)} and \PublOrArXiv{\cite[Definition~5.21]{HartlSingh2}}{Definition\ref{DefVPsiLocSht}}, respectively.
\qed
}

\bigskip\noindent
{\bfseries Corollary~5.25'.}
{\itshape
Keep the situation of Theorem~5.24'. For every $\eta\in H_L$ note that $i(\eta\psi)=i(\psi)$, let $\ulHM{}^\eta$ and $\omega_\psi^\eta\in\Koh^{\eta\psi}(\ulHM{}^\eta,L\dbl y_{i(\psi)}-\eta\psi(y_{i(\psi)})\dbr)$ be obtained by extension of scalars via $\eta$, and choose an $E_v$-generator $u_\eta\in\Koh_{1,v}(\ulHM{}^\eta,Q_v)$. Then
\[
\TS\tfrac{1}{\#H_L}\sum\limits_{\eta\in H_L}v({\TS\int_{u_\eta}\omega_\psi^\eta}) \;=\; Z_v(a^0_{E_v,\psi,\Phi},1)-\mu_{\Art,v}(a^0_{E_v,\psi,\Phi})+\tfrac{1}{\#H_L}\sum\limits_{\eta\in H_L}\bigl(v^\sim(\omega_\psi^\eta)+v_{\eta\psi}(u_\eta)\bigr)\,.
\qed
\]
}

\end{appendix}

%%%%%%%%%%%%%%%%%%%%%%%%%%%%%%%%%%%%%%%%%%%%%%%%%%%%%%%%%%% 
%%  
%%     References
%% 
%%%%%%%%%%%%%%%%%%%%%%%%%%%%%%%%%%%%%%%%%%%%%%%%%%%%%%%%%%%%% 

{\small

}

\vfill

\begin{minipage}[t]{0.5\linewidth}
\noindent
Urs Hartl\\
Universit\"at M\"unster\\
Mathematisches Institut \\
Einsteinstr.~62\\
D -- 48149 M\"unster
\\ Germany
\\[1mm]
\href{https://www.uni-muenster.de/Arithm/hartl/}{https:/\hspace{-1mm}/www.uni-muenster.de/Arithm/hartl/}
\end{minipage}
\begin{minipage}[t]{0.45\linewidth}
\noindent
Rajneesh Kumar Singh \\
Ramakrishna Mission Vivekananda University\\
PO Belur Math\\
Dist Howrah 711202\\
West Bengal\\
India
\\[1mm]
\end{minipage}

\end{document}